\DeclareMathOperator{\dens}{\mathrm{dens}}
\DeclareMathOperator{\Per}{\mathrm{Per}}
\DeclareMathOperator{\Aper}{\mathrm{Aper}}
\newcommand{\bsigma}{\boldsymbol{\sigma}}
\newcommand{\btau}{\boldsymbol{\tau}}
\newcommand{\N}{\mathbb{N}}
\newcommand{\Z}{\mathbb{Z}}
\newcommand{\cA}{\mathcal{A}}
\newcommand{\cB}{\mathcal{B}}
\newcommand{\cC}{\mathcal{C}}
\newcommand{\cI}{\mathcal{I}}
\newcommand{\cL}{\mathcal{L}}
\newcommand{\cP}{\mathcal{P}}
\newcommand{\cR}{\mathcal{R}}
\newcommand{\cS}{\mathcal{S}}
\newcommand{\cT}{\mathcal{T}}
\newtheorem{theo}{Theorem}
\newtheorem{cor}[theo]{Corollary}
\newtheorem{prop}[theo]{Proposition}
\newtheorem{lem}[theo]{Lemma}
\newtheorem{claim}[theo]{Claim}
\theoremstyle{remark}
\newtheorem{rema}[theo]{Remark}
\def\epsilon{\varepsilon}
\def\NN{\mathbb{N}}
\def\ZZ{\mathbb{Z}}
\def\SS{\texorpdfstring{$\mathcal{S}$}{}}
\def\PP{\mathcal{P}}
\def\TT{\mathcal{T}}
\def\coinc{\mathrm{coinc}}
\def\Per{\operatorname{Per}}
\def\pimeq{\pi_{{\rm meq}}}
\newcommand{\inttt}[1]{\lvert \kern-0.25ex \lvert \kern-0.25ex \lvert #1 \rvert \kern-0.25ex \rvert \kern-0.25ex \rvert}
\newcommand{\nor}[1]{\lvert \kern-0.25ex \lvert #1 \rvert \kern-0.25ex \rvert}
\title{The Jacobs--Keane theorem from the $\cS$-adic viewpoint}
\author{Felipe Arbulú}
\address{
Laboratoire Amiénois de Mathématique Fondamentale et Apliquée,
CNRS-UMR 7352,
Université de Picardie Jules Verne,
33 rue Saint Leu,
80039 Amiens cedex 1,
France.}
\email{felipe.arbulu@u-picardie.fr}
\author{Fabien Durand}
\address{
Laboratoire Amiénois de Mathématique Fondamentale et Apliquée,
CNRS-UMR 7352,
Université de Picardie Jules Verne,
33 rue Saint Leu,
80039 Amiens cedex 1,
France.}
\email{fabien.durand@u-picardie.fr}
\author{Bastián Espinoza}
\address{
Laboratoire Amiénois de Mathématique Fondamentale et Apliquée,
CNRS-UMR 7352,
Université de Picardie Jules Verne,
33 rue Saint Leu,
80039 Amiens cedex 1,
France.}
\email{bespinoza@dim.uchile.cl}
\subjclass{Primary: 37B10; Secondary: 37B20, 37A05}
\date{\today}
\begin{document}

% Keywords
\keywords{Toeplitz subshifts, $\cS$-adic subshifts, coincidences, odometer, discrete spectrum}

% Abstract
\begin{abstract}
In the light of recent developments of the ${\mathcal S}$-adic study of subshifts, we revisit, within this framework, a well-known result on Toeplitz subshifts due to Jacobs--Keane giving a sufficient combinatorial condition to ensure discrete spectrum.
We show that the notion of coincidences, originally introduced in the '$70$s for the study of the discrete spectrum of substitution subshifts, together with the ${\mathcal S}$-adic structure of the subshift allow to go deeper in the study of Toeplitz subshifts. 
We characterize spectral properties of the factor maps onto the maximal equicontinuous topological factors by means of coincidences density.
We also provide an easy to check necessary and sufficient condition to ensure unique ergodicity for constant length ${\mathcal S}$-adic subshifts.
\end{abstract}

\maketitle

%%%%%%%%%%%%%%%%%%%%%%%%%%%%%%%%%%%%%%%%%%%%%%%%%%
\section{Introduction}\label{section:intro}
%%%%%%%%%%%%%%%%%%%%%%%%%%%%%%%%%%%%%%%%%%%%%%%%%%

A series of recent works shows that the underlying $\cS$-adic structures of zero-entropy subshifts shed new light on the study of their properties as shown by \cite{BSTY19, BCBD+21, DDMP21, EM22,Esp22, GLL22, AD23, Esp23}.  
This approach, which goes back to \cite{LV92, Fer96}, consists in finding a representation of the subshift in terms of an infinite sequence of morphisms defined on finitely generated monoids.
For more references on this subject, see the general references \cite{BD14, DP22}.

In the present work we investigate the spectral properties of Toeplitz subshifts through the $\cS$-adic perspective.
For a sequence $x = (x_n)_{n \in \ZZ} \in \cA^\ZZ$ on some finite alphabet $\cA$ and $p \ge 0$, we denote by $\Per_{p}(x)$ the set of integers $n$ such that $(x_{n+kp})_{k \in \ZZ}$ is constant.
The aperiodic part of $x$ is the set
\[
\Aper(x)
= \ZZ \setminus \bigcup_{p \ge 0} \Per_p(x).
\]
A Toeplitz sequence is a non periodic sequence $x$ such that $\Aper(x) = \emptyset$ and a Toeplitz subshift is the subshift generated by a Toeplitz sequence $x$, {\em i.e.}, the shift orbit closure of $x$.

Toeplitz sequences $x$ always have a {\em periodic structure}, that is, a sequence $(p_n)_{n \ge 0}$ such that for all $n \ge 0$ we have
\begin{itemize}
    \item $\Per_{p_n}(x) \not= \emptyset$ and $\Per_{p_n}(x) \not= \Per_q(x)$ for all $0 \le q < p_n$;
    \item $p_n$ divides $p_{n+1}$; and
    \item $\cup_{n \ge 0} \Per_{p_n}(x) = \ZZ$.
\end{itemize}
This results, topologically, in a maximal equicontinuous topological factor which is the odometer \cite{Wil84}.
A famous result of Jacobs and Keane \cite{JK69, Mar75,Dow05} gives a sufficient condition by means of this periodic structure to ensure that the Toeplitz subshift has {\em discrete spectrum}, that is, there exists a measure theoretical isomorphism between the subshift and a translation on a compact Abelian group.

In this paper we revisit this result through the $\cS$-adic framework.
Indeed from \cite[Theorem 8]{GJ00} (see also \cite[Theorem 6.1.7]{DP22}) we know that Toeplitz subshifts are generated by proper, constant length, primitive and recognizable $\cS$-adic sequences. 
We will precise this can be relaxed changing ``proper'' by ``having coincidences'' in \Cref{prop:toeplitzSadic}.

Observe that when a minimal dynamical system $(X,T)$ continuously factorizes onto a group rotation then the factor map is unique up to a translation.
Thus below we will speak about ``the'' factor map instead of ``a'' factor map.
We denote by $\pimeq$ this continuous factor map.

In \cite[Theorem 6]{JK69} the authors show that for a Toeplitz subshift the equivalent conditions in \Cref{item:JK_1} and \Cref{item:JK_2} below imply the discrete spectrum of the system.
However, these conditions are not necessary for the discrete spectrum.
Indeed, they are equivalent to the strictly stronger condition that the subshift is a {\em regular extension} of its maximal equicontinuous topological factor (the union of the singleton $\pimeq$-fibers has full measure).
For more details see \Cref{subsec:top_dyn}.
There exist discrete spectrum Toeplitz subshifts that do not fulfill this condition (see \Cref{{sec:examples}}).

The difference between regularity and having discrete spectrum has been studied in detail in \cite{DG16,Gar17,HLSY21,GJY21} for topological dynamical systems.
There are given various strictly different notions, from topological to more measurable ones, all implying the discrete spectrum, such as mean equicontinuity or $\mu$-mean equicontinuity (which is equivalent to the discrete spectrum relative to $\mu$). 

In our work we introduced the point of view of coincidences to propose a more computable way to see the discrete spectrum. 
This allows us to build many examples having various spectral and ergodic properties. 

The notion of coincidence is quantified below by the term ``$\coinc$'' that is defined in \Cref{subsec:coincidences}.
Coincidences have shown to be very relevant to detect the pure discrete spectrum.
For example, in the context of constant length substitution subshifts \cite{Dek78}, two-symbol Pisot substitutions \cite{BD02, HS03} or more general substitutions \cite{Liv87, Que87}.
Moreover, this notion has been intensively used to tackle the Pisot substitution conjecture \cite{CS01,Pyth02,BK06,BBK06}.

For a subset $A \subseteq \ZZ$, the {\em density} of $A$ is $\dens(A) = \limsup_{N \to +\infty} \frac{|A \cap [0, N)|}{N}$.

\begin{theo}
\label{theo:charac_MEF_is_regular}
Let $(X,S)$ be the subshift generated by the Toeplitz sequence $x$ with periodic structure $(p_n)_{n \ge 0}$.
Then, $X$ is defined by some constant length, recognizable directive sequence with coincidences $\btau = (\tau_n)_{n \ge 0}$ and the following are equivalent:
\begin{enumerate}
    \item  \label{item:MEF_1}
    $(X, S)$ is a regular extension of its maximal equicontinuous topological factor: the union of the singleton $\pimeq$-fibers has full measure.
    \item \label{item:JK_1} $\dens(\Per_{p_n}(x)) \to 1$ as $n \to \infty$.
    \item \label{item:JK_2} $\dens(\Aper(y)) = 0$ for every $y \in X$.
    \item \label{item:MEF_2} For every $n \ge 0$, we have
    \begin{equation*}
    \lim_{N \to +\infty} \frac{|\coinc (\tau_{[n,N)})|}{|\tau_{[n,N)}|}
    = 1.
    \end{equation*}
    \item \label{item:MEF_3} We have
    \begin{equation*}
    \lim_{N \to +\infty} \frac{|\coinc (\tau_{[0,N)})|}{|\tau_{[0,N)}|}
    = 1.
    \end{equation*}
    
    \item \label{item:MEF_4} There is a contraction $\btau' = (\tau'_k)_{k \ge 0}$ of $\btau$ such that 
    \begin{equation*}
    \sum_{k \ge 0} \frac{|\coinc (\tau'_k)|}{|\tau'_k|}
    = +\infty.
    \end{equation*}
\end{enumerate}
Moreover, any of these conditions implies that $(X, S)$ is uniquely ergodic and that $\pimeq$ is a measure theoretical isomorphism between $(X, S)$ and its maximal equicontinuous topological factor. 
\end{theo}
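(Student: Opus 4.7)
The plan splits into three blocks. First, I would handle the classical equivalences $(1) \iff (2) \iff (3)$ and the ``Moreover'' clause. Minimality of $(X,S)$ gives $\dens(\Per_{p_n}(y)) = \dens(\Per_{p_n}(x))$ for every $y \in X$, whence $(2) \iff (3)$. The equivalence $(1) \iff (2)$ is the standard combinatorial description of regular extensions of odometers: the $\pimeq$-fiber over $\overline{g}$ is a singleton exactly when the corresponding Toeplitz point has empty aperiodic part relative to $(p_n)$, so by the pointwise Ergodic Theorem the union of singleton fibers has full measure iff $\dens(\Per_{p_n}(x)) \to 1$. The ``Moreover'' clause is then routine: odometers are uniquely ergodic, any regular extension of a uniquely ergodic system is uniquely ergodic, and a factor map injective on a set of full measure is a measure-theoretic isomorphism.

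Second, I would build the bridge to coincidences through the identity
\begin{equation*}
\dens\bigl(\Per_{\ell_N}(x)\bigr) \;=\; \frac{|\coinc(\tau_{[0,N)})|}{|\tau_{[0,N)}|}, \qquad \ell_N := |\tau_{[0,N)}|.
\end{equation*}
Recognizability provides a unique level-$N$ decomposition $x = \cdots \tau_{[0,N)}(a_{-1}) \tau_{[0,N)}(a_0) \tau_{[0,N)}(a_1) \cdots$ with $a_k \in \cA_N$, and minimality forces $(a_k)_{k \in \ZZ}$ to visit every letter of $\cA_N$. Writing $j = k \ell_N + r$ with $0 \le r < \ell_N$, the value $x_j = \tau_{[0,N)}(a_k)_r$ depends only on $r$ (and not on $k$) precisely when $r \in \coinc(\tau_{[0,N)})$, which gives the identity above. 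Since $(\ell_N)_{N}$ is cofinal in the divisibility chain of essential periods of $x$, this yields $(2) \iff (5)$. Applying the same identity to the tail directive sequence $(\tau_k)_{k \ge n}$, which satisfies the same structural hypotheses and hence generates a Toeplitz subshift $X^{(n)}$ carrying the Toeplitz point $x^{(n)}$ obtained from the level-$n$ factorization of $x$, gives $|\coinc(\tau_{[n,N)})|/|\tau_{[n,N)}| = \dens(\Per_{|\tau_{[n,N)}|}(x^{(n)}))$, so (4) for $\btau$ is precisely (2) for every $X^{(n)}$. The Rokhlin-tower structure $X = \bigsqcup_{0 \le k < \ell_n} S^k \tau_{[0,n)}(X^{(n)})$ makes $X$ regular over its maximal equicontinuous factor iff $X^{(n)}$ is regular over its own, whence $(4) \iff (5)$ by invoking $(1) \iff (2)$ at the level of $X^{(n)}$.

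Third, I would prove $(5) \iff (6)$ using the sub-multiplicative estimate
\begin{equation*}
1 - C_{[a,c)} \;\le\; (1 - C_{[a,b)})(1 - C_{[b,c)}), \qquad a \le b \le c,
\end{equation*}
where $C_{[a,b)} := |\coinc(\tau_{[a,b)})|/|\tau_{[a,b)}|$. The estimate follows from a direct inclusion-exclusion: coincidences of $\tau_{[b,c)}$ propagate to blocks of coincidences of $\tau_{[a,c)}$, and the remaining coincidences contain all offsets at which $\tau_{[a,b)}$ is itself a coincidence. Iterating along a contraction $\btau' = (\tau_{[n_k, n_{k+1})})_k$ produces $1 - C_{[n_0, n_K)} \le \prod_{k < K}(1 - |\coinc(\tau'_k)|/|\tau'_k|)$; since for $\pi_k \in [0, 1)$ one has $\sum \pi_k = +\infty \iff \prod(1 - \pi_k) = 0$, condition (6) forces $C_{[n_0, n_K)} \to 1$, and the monotonicity $C_{[0,N)} \ge C_{[n_0,N)}$ gives (5). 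Conversely, using $(5) \iff (4)$, one constructs a contraction greedily: pick $n_0 = 0 < n_1 < n_2 < \cdots$ with $C_{[n_k, n_{k+1})} \ge \tfrac{1}{2}$ for every $k$. The main obstacle lies in the second block, namely in justifying that $X$ and $X^{(n)}$ are simultaneously regular extensions of their maximal equicontinuous factors via the Rokhlin-tower decomposition induced by $\tau_{[0,n)}$; once this transport principle is established, the combinatorial identity and the sub-multiplicative inequality are both elementary.
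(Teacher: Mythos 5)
Your proposal is correct in its overall architecture, but it reaches the conclusion by a genuinely different route from the paper. The paper never passes through the classical Jacobs--Keane--Williams criterion: it proves the equivalence of the regularity condition (1) with the coincidence condition (4) directly, using a purpose-built fiber-separation argument (\Cref{lem:limsup_lemma}, which from a defect of coincidence density at level $n$ produces two disjoint sets $E_n,E'_n$ with the same $\pimeq$-image of positive Haar measure, together with \Cref{lem:cI'_equals_bigcap_cI'n} and \Cref{lem:measure_cI'n_as_limit} for the converse), and it obtains unique ergodicity from the divergence criterion of \Cref{theo:CriteriaUniqErgo} rather than from regularity. You instead outsource the measure-theoretic core to the classical statement that a Toeplitz flow is a regular extension of its odometer iff $\dens(\Per_{p_n}(x))\to 1$, and you build the bridge to coincidences via the identity $\dens(\Per_{|\tau_{[0,N)}|}(x))=|\coinc(\tau_{[0,N)})|/|\tau_{[0,N)}|$, which is correct given recognizability and the fact that the level-$N$ itinerary visits every letter of $\cA_N$. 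What your route buys is brevity and a transparent link to the original Jacobs--Keane formulation; what the paper's route buys is self-containedness and, more importantly, a relative version (coincidences with respect to subalphabets $\cA'_N$, adapted measures) that is reused verbatim to prove \Cref{theo:charac_MEF_is_iso}, where no classical regularity theorem is available. Your derivation of the ``Moreover'' clause (any extension of a uniquely ergodic base in which the union of singleton fibers has full Haar measure carries a unique invariant measure, namely the pullback of $\nu$) is a clean shortcut and is correct.

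Two soft spots to be aware of. First, the step $(5)\Rightarrow(4)$ genuinely cannot be extracted from the submultiplicative inequality of \Cref{lem:inclusion_coinc} alone, since the inclusion there only bounds $\coinc(\sigma\circ\tau)$ from below; you correctly route around this via the tail subshifts $X^{(n)}$, but the transport principle you invoke (regularity of $X$ over its odometer iff regularity of the induced system $X^{(n)}$ over the induced odometer) is asserted rather than proved, and you yourself flag it as the main obstacle. It is true, and can be obtained more directly by noting that $\Aper$ is a level-independent notion (an aperiodic position of $x^{(n)}$ forces, via the bounded recognizability window, a positive proportion of aperiodic positions in the corresponding block of $x$, and conversely), so that $\dens(\Aper(x))=0$ iff $\dens(\Aper(x^{(n)}))=0$; this is essentially what the paper dismisses as ``easy to see''. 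Second, the passage between the periodic structure $(p_n)$ and the heights $(|\tau_{[0,N)}|)$ requires the mutual cofinality of the two divisibility chains, which you assert; it follows from the fact that both define the maximal equicontinuous factor of the same system, hence the same supernatural number. Neither point is a gap in the sense of an irreparable error, but both need to be written out for a complete proof.
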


It is observed in \cite[Theorem 4.12]{GJY21}[Theorem 54]{Gar17} that for a minimal topological dynamical system the regularity condition is equivalent to the {\em diam-mean equicontinuity property}.

The $\cS$-adic approach also allows us to study measure-theoretic aspects of a subshift.
This is done by considering the notion of {\em adapted} directive sequences.
Roughly speaking, when considering adapted sequences we get rid of parts of the system that are negligible.

We present a variation of \Cref{theo:charac_MEF_is_regular} in terms of coincidence densities relative to the non negligible part of the system.
This result is stated for subshifts generated by constant length directive sequences.
It includes the case of Toeplitz subshifts as shown by \Cref{prop:toeplitzSadic}, but also  cases of non Toeplitz, non uniquely ergodic subshifts.

\begin{theo}\label{theo:charac_MEF_is_iso}
Let $X$ be generated by the constant length, recognizable and primitive directive sequence $\btau = (\tau_n : \cA_{n+1}^\ast \to \cA_n^\ast)_{n \ge 0}$.
Suppose that $\btau$ is $(\cA'_n)_{n \ge 0}$-adapted to an ergodic measure $\mu$ of $X$.
The following conditions are equivalent.
\begin{enumerate}
    \item \label{item:iso_1} \label{cond:theo:charac_MEF_is_iso:is_iso}
    The map $\pimeq : (X,S,\mu) \to (\ZZ_{(|\tau_{[0,n)}|)_{n \ge 0}}, +1, \nu)$ defines a measure-theoretic isomorphism.
    
    \item \label{item:iso_2} \label{cond:theo:charac_MEF_is_iso:limit_coinc_mu}
    For every $n \geq 0$, we have
    \begin{equation*}
    \limsup_{N \to +\infty} \frac{|\coinc_{\cA'_N}(\tau_{[n,N)})|}{|\tau_{[n,N)}|}
    = 1.
    \end{equation*}
\end{enumerate}
\end{theo}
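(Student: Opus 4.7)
The plan is to transfer between the measure-theoretic injectivity in \Cref{item:iso_1} and the combinatorial density in \Cref{item:iso_2} through the self-joining $\lambda = \mu \times_{\pimeq} \mu$ and the canonical measurable desubstitutions provided by recognizability of $\btau$. Recall that, for every $n \ge 0$, recognizability furnishes a measurable map $x \mapsto (\tilde{x}^{(n)}, r_n(x))$ with $\tilde{x}^{(n)}$ in the level-$n$ subshift $X^{(n)}$ and $r_n(x) \in \{0, \dots, |\tau_{[0,n)}|-1\}$, satisfying $x = S^{r_n(x)} \tau_{[0,n)}(\tilde{x}^{(n)})$ and $r_n(x) \equiv \pimeq(x) \pmod{|\tau_{[0,n)}|}$. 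The $(\cA'_n)$-adapted hypothesis says that $\tilde{x}^{(n)}_0 \in \cA'_n$ for $\mu$-a.e.\ $x$ and every $n$. By a standard disintegration, \Cref{item:iso_1} is equivalent to $\lambda$ being supported on the diagonal of $X \times X$.

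For \Cref{item:iso_2} $\Rightarrow$ \Cref{item:iso_1}, I would specialize \Cref{item:iso_2} to $n = 0$ and pick $N_k \to \infty$ realizing the $\limsup = 1$. For $\lambda$-a.e.\ pair $(x,y)$ we have $r_{N_k}(x) = r_{N_k}(y) =: r$, and adaptedness gives $\tilde{x}^{(N_k)}_0, \tilde{y}^{(N_k)}_0 \in \cA'_{N_k}$. Whenever $r \in \coinc_{\cA'_{N_k}}(\tau_{[0,N_k)})$ one concludes $x_0 = \tau_{[0,N_k)}(\tilde{x}^{(N_k)}_0)_r = \tau_{[0,N_k)}(\tilde{y}^{(N_k)}_0)_r = y_0$. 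Using that $r_{N_k}$ is uniformly distributed under $\mu$ (via the level-$N_k$ Kakutani--Rokhlin partition and ergodicity), $\lambda(\{x_0 = y_0\}) \ge |\coinc_{\cA'_{N_k}}(\tau_{[0,N_k)})|/|\tau_{[0,N_k)}| \to 1$. Shift-invariance of $\lambda$ then yields $\lambda$-a.s.\ $x = y$, which is \Cref{item:iso_1}.

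For \Cref{item:iso_1} $\Rightarrow$ \Cref{item:iso_2}, fix $n \ge 0$. The isomorphism property descends along the level-$n$ desubstitution factor $\pi_n : x \mapsto \tilde{x}^{(n)}$: the map $\pimeq^{(n)} : (X^{(n)}, \mu_n) \to \ZZ_{(|\tau_{[n,m)}|)_{m \ge n}}$ is also a measure-theoretic isomorphism, so $\lambda_n = \mu_n \times_{\pimeq^{(n)}} \mu_n$ sits on the diagonal. Suppose the $\limsup$ at level $n$ equals some $c < 1$. Then for arbitrarily large $N$ there is an $\varepsilon$-density (with $\varepsilon > 0$ uniform) of offsets $r \in [0, |\tau_{[n,N)}|)$ at which two letters of $\cA'_N$ disagree under $\tau_{[n,N)}$. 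Using primitivity and adaptedness to realize each disagreeing letter with positive $\mu_N$-mass, one transfers this density into positive $\lambda_n$-mass on pairs $(\tilde{x}, \tilde{y})$ with $\tilde{x}_0 \ne \tilde{y}_0$, contradicting that $\lambda_n$ lives on the diagonal.

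The main obstacle is the step \Cref{item:iso_1} $\Rightarrow$ \Cref{item:iso_2}: converting the density of non-coincidence offsets into a positive-measure off-diagonal joining requires realizing \emph{both} letters of a non-coincidence pair with positive conditional mass in the same $\pimeq^{(n)}$-fiber, for which primitivity together with the adapted structure is essential, and one must be careful that the freedom of $\limsup$ (rather than $\lim$) in \Cref{item:iso_2} matches the freedom to choose a witnessing subsequence $N_k$ at each level $n$. The reverse direction, once the self-joining/coincidence dictionary has been set up, reduces to a routine calculation.
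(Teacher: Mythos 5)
Your direction \Cref{item:iso_2} $\Rightarrow$ \Cref{item:iso_1} is correct and takes a genuinely different route from the paper: the paper shows directly that $\nu$-almost every fiber meets $\TT'$ in at most one point (via the sets $\cI'$, $\cI'_m$ and \Cref{lem:cI'_equals_bigcap_cI'n}, \Cref{lem:measure_cI'n_as_limit}), whereas you pass through the relatively independent self-joining $\lambda=\mu\times_{\pimeq}\mu$ and, thanks to shift-invariance, only need condition \Cref{item:iso_2} at $n=0$. Two small points to tighten: adaptedness gives $\tilde x^{(N)}_0\in\cA'_N$ only for all sufficiently large $N$ ($\mu$-a.e., by Borel--Cantelli), not for every $N$, so the estimate should read $\lambda(\{x_0\neq y_0\})\le 1-|\coinc_{\cA'_{N_k}}(\tau_{[0,N_k)})|/|\tau_{[0,N_k)}|+2\mu(X\setminus\TT'_{N_k})$, with the last term tending to $0$ by \eqref{eq:new_adapted}. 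Both are harmless.

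The gap is in \Cref{item:iso_1} $\Rightarrow$ \Cref{item:iso_2}, precisely at the step you flag as the main obstacle but do not resolve. To produce positive off-diagonal $\lambda_n$-mass you must lower-bound $\int \mu_z(A)\,\mu_z(B)\,d\nu(z)$, where $A$ and $B$ are the disjoint unions of level-$N$ towers carrying the two disagreeing letters $a',b'\in\cA'_N$ at a non-coincidence offset. Adaptedness controls only the \emph{averages}: $\int_{\{z:\,z_N=k\}}\mu_z(S^{k}B_N(a'))\,d\nu(z)=\mu(B_N(a'))\ge\delta/|\tau_{[0,N)}|$, i.e.\ a conditional average $\ge\delta$ on the cell $\{z_N=k\}$. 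Two nonnegative functions bounded by $1$ with conditional averages $\ge\delta$ can have disjoint supports as soon as $\delta<1/2$, so the product can integrate to zero: the conditional masses of the two letters may anti-correlate across fibers, and neither primitivity nor adaptedness excludes this. This is why the paper does not argue through the joining at all in this direction. \Cref{lem:limsup_lemma} instead produces two \emph{disjoint} sets $E_n$, $E'_n$ of positive $\mu$-measure with $\pimeq(E_n\cap U)=\pimeq(E'_n\cap U)\pmod\nu$ for \emph{every} conull Borel set $U$, and the contradiction is obtained from the set-theoretic injectivity of $\pimeq$ restricted to a conull set, never from a product of conditional measures. Obtaining that lemma requires work your sketch does not supply: a pigeonhole to fix a single pair of letters $(a,b)$ and a subsequence $(N_\ell)$, limsup constructions of $E_n,E'_n$ together with a compactness argument identifying $\pimeq(E_n\cap K)$ for compact $K$, and the additional requirement \eqref{eq:estimate_freqs} on $(N_\ell)$ needed to prove $\nu(F_{n,\infty})=\limsup_{\ell}\nu(F_{n,N_\ell})$. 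Without an argument of this kind (or some other way around the anti-correlation issue), this direction remains open in your proposal.
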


We illustrate below, as applications of our results, various behaviours of Toeplitz subshifts with respect to discrete spectrum.
They are detailed in \Cref{sec:examples}.

Let $\cA = \{ a,b \}$ and consider the directive sequence $\btau = (\tau_n : \cA^\ast \to \cA^\ast)_{n \ge 0}$ such that for all $n \ge 0$ the morphism $\tau_n : \cA^* \to \cA^*$ is defined by
\[
a \mapsto a^{n^2} b a a^{n^2}, \enskip
b \mapsto b^{n^2} a a b^{n^2}.
\]
The Toeplitz subshift $(X_{\btau} , S)$ is not a regular extension of its maximal equicontinuous topological factor, it has two ergodic measures $\mu_a$ and $\mu_b$.
Moreover, the map $\pimeq$ defines a measure theoretical isomorphism to the maximal equicontinuous topological factor for both $(X_{\btau}, S, \mu_a)$ and $(X_{\btau}, S, \mu_b)$ (see \Cref{subsec:example_2}).

Now let $\cA = \{a, b, c\}$ and consider the directive sequence $\btau = (\tau_n : \cA^\ast \to \cA^\ast)_{n \ge 0}$ such that for all $n \ge 0$ the morphism $\tau_n : \cA^* \to \cA^*$ is defined by
\[
a \mapsto (ab)^{n^2+1} ac (ab)^{n^2+1}, \enskip 
b \mapsto (ab)^{n^2+1} bc (ab)^{n^2+1}, \enskip
c \mapsto c (ab)^{n^2} c cc  c (ab)^{n^2} c.
\]
The Toeplitz subshift $(X_{\btau} , S)$ is not a regular extension of its maximal equicontinuous topological factor, it possesses a unique invariant probability measure $\mu$ such that $\pimeq$ is a measure theoretical isomorphism to its maximal equicontinuous topological factor (see \Cref{subsec:example_3}).
As far as we know, the first such example was given in \cite[Section 5]{DK15}.

Other interesting examples are the following.
Let $\cA = \{a, b, c\}$, $m : \NN \to \NN$ defined by $2 m(n) + 3 = 3^n$ for each $n \ge 0$ and consider the directive sequence $\btau = (\tau_n : \cA^\ast \to \cA^\ast)_{n \ge 0}$ such that for all $n \ge 0$ the morphism $\tau_n : \cA^* \to \cA^*$ is
\[
a \mapsto (ab)^{m(n)} a b c, \enskip
b \mapsto a(ab)^{m(n)} a c, \enskip
c  \mapsto (ab)^{m(n) } c b c.
\]
The Toeplitz subshift $(X_{\btau} , S)$ is such that its maximal equicontinuous topological factor is $(\ZZ_3, +1)$.
Moreover it has a unique ergodic measure $\mu$.
Nevertheless $\pimeq$ is not a measure theoretical isomorphism, but $(X_{\btau} , S, \mu)$ has discrete spectrum: it is measure theoretically isomorphic to $(\ZZ_3 \times \ZZ /2\ZZ, +(1,1))$ (see \Cref{sec:ex4}).

Let $\cA = \{1, 2, a, b, c\}$, $s : \NN \to \NN$ a nonnegative function such that $2s(n)+5 = 3^n$ for $n \ge 0$ and consider the directive sequence $\btau = (\tau_n : \cA^\ast \to \cA^\ast)_{n \ge 0}$ such that for all $n \ge 0$ the morphism $\tau_n : \cA^* \to \cA^*$ is defined by
\[
\begin{array}{ll}
1 \mapsto a (12)^{s(n)} 1 2 b c,  & a \mapsto (ab)^{s(n)} a b 1 2 c\\
2 \mapsto a(12)^{s(n)} 2 1 b c, & b \mapsto  a(ab)^{s(n)} b 1 2 c\\
& c \mapsto (ab)^{s(n)} a b 1 2 c.
\end{array}
\]

The Toeplitz subshift $(X_{\btau} , S)$ has two ergodic measures $\mu$ and $\nu$ and its maximal equicontinuous topological factor is $(\ZZ_3, +1)$.
The map $\pimeq$ is a measure theoretical isomorphism for the measure $\mu$ whereas it is not for the measure $\nu$.
Nevertheless, the system $(X_{\btau}, S, \nu)$ has discrete spectrum: it is measure theoretically isomorphic to $(\ZZ_3 \times \ZZ /2\ZZ, +(1,1))$ (see \Cref{sec:ex5}).

Let $\cA = \{1, 2\}$, $s : \NN \to \NN$ a nonnegative function such that $3s(n)+1 = 5^{2n}$ for $n \ge 0$ and consider the directive sequence $\btau = (\tau_n : \cA^\ast \to \cA^\ast)_{n \ge 0}$ such that for all $n \ge 0$ the morphism $\tau_n : \cA^* \to \cA^*$ is defined by
\[
1 \mapsto (121)^{s(n)} 2, \enskip
2 \mapsto 1(121)^{s(n)}.
\]
The Toeplitz subshift $(X_{\btau}, S)$ has a unique ergodic measure $\mu$.
However, $(X_{\btau}, S, \mu)$ does not have discrete spectrum (see \Cref{sec:ex6}).

Finally we consider an example of a minimal non Toeplitz subshift generated by a constant length directive sequence that has positive entropy and where the conclusion of \Cref{theo:charac_MEF_is_iso} holds (see \Cref{sec:ex7}).

\section*{Acknowledgement}
The first author is supported by ANID (ex CONICYT) Doctorado Becas Chile $72210185$ grant. The third author thanks Doctoral Fellowship CONICYT-PFCHA Doctorado Nacional $2020-21202229$.
All authors are supported by ANR-22-CE40-0011 project Inside Zero Entropy Systems.

%%%%%%%%%%%%%%%%%%%%%%%%%%%%%%%%%%%%%%%%%%%%%%%%%%
\section{Background}
%%%%%%%%%%%%%%%%%%%%%%%%%%%%%%%%%%%%%%%%%%%%%%%%%%

\subsection{Basics in topological dynamics}\label{subsec:top_dyn}

A {\em topological dynamical system} (or just a system) is a pair $(X, T)$ where $X$ is a compact metric space and $T : X \to X$ is a homeomorphism.
The system $(X, T)$ is {\em minimal} if for every point $x \in X$ the orbit $\{T^n x : n \in \ZZ\}$ is dense in $X$.

Let $(X, T)$ and $(Y, S)$ be two topological dynamical systems.
We say that $(Y, S)$ is a {\em topological factor} of $(X, T)$ if there exists a continuous and onto map $\pi : X \to Y$ such that
\begin{equation}\label{eq:top_factor}
\pi \circ T
= S \circ \pi.
\end{equation}
In this case, we say that $\pi$ is a \emph{factor map} and that $(X, T)$ is an {\em extension} of $(Y, S)$.
If in addition the map $\pi$ in \eqref{eq:top_factor} is a homeomorphism, we say that it is a \emph{topological conjugacy} and that $(X, T)$ and $(Y, S)$ are \emph{topologically conjugate}.

Let $(X, T)$ be a uniquely ergodic system and denote by $\mu$ its unique invariant probability measure.
We say that $(X, T)$ is a {\em regular} extension of $(Y, S)$ if there exists a factor map $\pi : X \to Y$ such that
\begin{equation}\label{eq:regular_extension_1}
\mu(\{x \in X : |\pi^{-1}(\{\pi(x)\})| = 1\})
= 1.    
\end{equation}

Any topological dynamical system $(X, T)$ possesses a {\em maximal equicontinuous topological factor} \cite{EG60}, that is, there exists an equicontinuous system $(Y, S)$ and a factor map $\pi : X \to Y$ such that every other equicontinuous factor of $(X, T)$ is a factor of $(Y, S)$.
The system $(Y, S)$ is unique up to topological conjugacy.
Moreover, if $(X, T)$ is minimal, then $(Y, S)$ corresponds to a rotation on a compact abelian group and we usually denote by $\nu$ the Haar measure on $Y$.

In the minimal case, if $(X, T)$ is a regular extension of its maximal equicontinuous topological factor $(Y, S)$, then condition \eqref{eq:regular_extension_1} is equivalent to
\begin{equation}\label{eq:regular_extension_2}
\nu(\{y \in Y : |\pi^{-1}(\{y\})| = 1\})
= 1.
\end{equation}

Moreover, since any two such factor maps differ only by a translation, condition \eqref{eq:regular_extension_2} is independent of the choice of the factor map $\pi$ and of the representation of $(Y,S)$ in its conjugacy class.

\subsection{Subshifts}\label{subsec:subshifts}

Let $\cA$ be a finite and nonempty set that we call {\em alphabet}.
Elements in $\cA$ are called {\em letters} or {\em symbols}.
The number of letters of $\cA$ is denoted by $|\cA|$.
The set of finite sequences or {\em words} of length $\ell \in \NN$ with letters in $\cA$ is denoted by $\cA^\ell$.
The {\em full shift} $\cA^\ZZ$ is the set of all bi-infinite sequences $(x_n)_{n \in \ZZ}$ with $x_n \in \cA$ for all $n \in \ZZ$.

A word $w = w_0 w_1 \ldots w_{\ell - 1} \in \cA^\ell$ can be seen as an element of the free monoid $\cA^\ast$ endowed with the operation of concatenation (whose neutral element is $\epsilon$, the empty word).
The integer $\ell$ is the {\em length} of the word $w$ and is denoted by $|w| = \ell$; the length of the empty word is $0$.
We say that a word $w$ {\em occurs} in a sequence $x=(x_n)_{n \in \ZZ} \in X$ if there exists $m \in \ZZ$ such that $w = x_m \cdots x_{m+|w|-1}$.
We use the same notion for words.
For such a word $w$ in $\cA^\ast$ and a letter $a$ in $\cA$, the number of occurrences of $a$ in $w$ is denoted by $|w|_a$.
A nonempty word $w = w_0 w_1 \ldots w_{\ell - 1} \in \cA^\ast$ \emph{starts} (resp. \emph{ends}) with a nonempty word $u \in \cA^\ast$ if $u = w_0 \ldots w_{i - 1}$ for some $i \le \ell$ (resp. $u = w_j \ldots w_{\ell - 1}$ for some $j \ge 0$).

The {\em shift map} $S : \cA^\ZZ \to \cA^\ZZ$ is defined by $S ((x_n)_{n \in \ZZ}) = (x_{n+1})_{n \in \ZZ}$.
A {\em subshift} is a subset $X$ of a fullshift $\cA^\ZZ$ which is closed for the product topology and invariant under the shift map.
Thus $(X, S)$ is a topological dynamical system that we also call subshift.

Let $(X, S)$ be a subshift.
For $x \in X$ and $i, j \in \ZZ$ with $i < j$ we define $x_{[i, j)} = x_i x_{i+1} \ldots x_{j-1}$.
The \emph{language} of $(X, S)$ is the set $\cL(X)$ containing all words $w \in \cA^\ast$ such that $w = x_{[m, m + |w|)}$ for some $x = (x_n)_{n \in \ZZ} \in X$ and $m \in \ZZ$.
In this case, we also say that $w$ is a \emph{factor} (also called \emph{subword}) of $x$.
Given $x \in X$, the language $\cL(x)$ is the set of all words that occur in $x$.
For two words $u,v \in \cL(X)$, the \emph{cylinder set} $[u.v]$ is the set $\{x \in X : x_{[-|u|,|v|)} = u v\}$.
When $u$ is the empty word we only write $[v]$, erasing the dot.
We remark that cylinder sets are clopen sets and they form a base for the topology of the subshift.

\subsection{Morphisms and substitutions}
\label{subsec:morphisms}

By a {\em morphism} we simply mean a morphism $\tau : \cA^\ast \to \cB^\ast$ between the free monoids $\cA^\ast$ and $\cB^\ast$ for some finite alphabets $\cA$ and $\cB$.
We say that $\tau$ is {\em erasing} whenever there exists $a$ in $\cA$ such that $\tau(a)$ is the empty word.
Otherwise we say it is {\em nonerasing}.

The morphism $\tau$ is {\em positive} if for all $a$ in $\cA$ every letter of $\cB$ occurs in $\tau(a)$.
The morphism $\tau$ is {\em proper} if there exist $p$ and $s$ in $\cB$ such that $\tau(a)$ starts with $p$ and ends with $s$ for every letter $a$ in $\cA$. 
The morphism $\tau$ is of {\em constant length} if the length of the word $\tau(a)$ does not depend on $a$.
In that case, we denote by $|\tau|$ the length of the word $\tau(a)$ for any $a$ in $\cA$.
Observe that if $\tau : \cA^\ast \to \cB^\ast$ and $\sigma : \cB^\ast \to \cC^\ast$ are morphisms of constant length, then the morphism $\sigma \circ \tau$ is of constant length and
\[
|\sigma \circ \tau|
= |\sigma| |\tau|.
\]

When it is nonerasing, $\tau$ extends naturally to maps from $\cA^\NN$ to itself and from $\cA^\ZZ$ to itself in the obvious way by concatenation (in the case of a two-sided sequence we apply $\tau$ to positive and negative coordinates separately and we concatenate the results at the coordinate zero).

To the morphism $\tau : \cA^\ast \to \cB^\ast$ we associate a {\em composition matrix} $M_\tau$ indexed by $\cB \times \cA$ such that its entry at position $(b, a)$ is the number of occurrences of $b$ in $\tau(a)$ for every $a \in \cA$ and $b \in \cB$.

\subsection{Coincidences of constant length morphisms}
\label{subsec:coincidences}

Throughout this article we will use the notion of {\em coincidences} of constant length morphisms.
Let $\tau : \cA^* \to \cB^*$ be a constant length morphism. 
A {\it coincidence} of $\tau$ relative to $\cA' \subseteq \cA$ is an integer $i \in [0, |\tau|)$ such that the map that sends a letter $a$ in $\cA'$ to the $i$-th letter of $\tau(a)$ is constant.
We denote by $\coinc_{\cA'}(\tau)$ the set of such integers.
When $\cA'=\cA$ we just say these integers are coincidences and denote the set of such integers by $\coinc (\tau)$.

The following lemma states how coincidences behave when morphisms are composed.
The elementary proof is left to the reader.

\begin{lem} \label{lem:inclusion_coinc}
Let $\tau : \cA^\ast \to \cB^\ast$ and $\sigma : \cB^\ast \to \cC^\ast$ be two morphisms of constant length.
Then the set $\coinc(\sigma \circ \tau)$ contains the disjoint union
\begin{align*}
&\enskip \{i |\sigma| + j : i \in \coinc(\tau),\ 0 \le j < |\sigma|\}\\
&\cup \{i |\sigma| + j : i \in [0, |\tau|) \setminus \coinc(\tau),\ j \in \coinc(\sigma)\}
\end{align*}
and we have
\begin{equation} \label{eq:ineq_coinc_morphisms}
1 - \frac{|\coinc(\sigma \circ \tau)|}{|\sigma \circ \tau|}
\le \left(1 - \frac{|\coinc(\sigma)|}{|\sigma|}\right) \left(1 - \frac{|\coinc(\tau)|}{|\tau|}\right).
\end{equation}
Moreover, if $|\cA| = |\cB| = 2$ then inequality \eqref{eq:ineq_coinc_morphisms} is an equality between both terms.
\end{lem}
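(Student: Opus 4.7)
My plan is elementary index-bookkeeping. Since $(\sigma \circ \tau)(a) = \sigma(\tau(a))$ has length $|\sigma||\tau|$, every position can be written uniquely as $i|\sigma|+j$ with $i \in [0,|\tau|)$ and $j \in [0,|\sigma|)$. Writing $b_i(a) \in \cB$ for the $i$-th letter of $\tau(a)$, the letter of $\sigma(\tau(a))$ at position $i|\sigma|+j$ is the $j$-th letter of $\sigma(b_i(a))$. The first claim then splits into two cases. If $i \in \coinc(\tau)$, then $b_i(a)$ is independent of $a$, so every $j \in [0,|\sigma|)$ yields a coincidence of $\sigma \circ \tau$; if $i \notin \coinc(\tau)$ but $j \in \coinc(\sigma)$, then the $j$-th letter of $\sigma(b)$ is independent of $b \in \cB$, so again the letter at position $i|\sigma|+j$ is independent of $a$. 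The two contributing sets are disjoint by construction (the condition on $i$ is incompatible across them).

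For the inequality I would just count cardinalities. The two disjoint sets contribute $|\coinc(\tau)|\cdot|\sigma|$ and $(|\tau|-|\coinc(\tau)|)\cdot|\coinc(\sigma)|$ respectively, hence
\[
|\sigma \circ \tau| - |\coinc(\sigma \circ \tau)|
\le (|\tau|-|\coinc(\tau)|)(|\sigma|-|\coinc(\sigma)|),
\]
and dividing by $|\sigma \circ \tau| = |\sigma||\tau|$ yields \eqref{eq:ineq_coinc_morphisms}.

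For the equality when $|\cA|=|\cB|=2$, I would prove the reverse inclusion, making the count tight. Assume $i \notin \coinc(\tau)$ and $j \notin \coinc(\sigma)$. Since $|\cA|=2$, the two values of $b_i(a)$ as $a$ ranges over $\cA$ are distinct and therefore exhaust $\cB$; since $|\cB|=2$, the two values of $(\sigma(b))_j$ as $b$ ranges over $\cB$ are also distinct. Composing, $(\sigma(b_i(a)))_j$ takes two different values as $a$ varies, so $i|\sigma|+j \notin \coinc(\sigma \circ \tau)$. Together with the first paragraph this gives set equality, hence equality of cardinalities and of the ratios in \eqref{eq:ineq_coinc_morphisms}.

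There is no real obstacle; the only delicate point is observing that in the binary case ``not a coincidence'' is equivalent to ``the associated one-letter map is a bijection'', which is exactly what upgrades the inclusion into an equality.
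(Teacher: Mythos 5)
Your proof is correct; the paper explicitly leaves this proof to the reader as elementary, and your index decomposition $i|\sigma|+j$, the two-case analysis, the cardinality count, and the observation that in the binary case a non-coincidence forces the associated letter maps to be bijections are exactly the intended argument.
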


\subsection{\SS-adic subshifts}
\label{subsec:Sadicsubshifts}

A {\em directive sequence} $\btau = (\tau_n : \cA_{n+1}^\ast \to \cA_n^\ast)_{n \ge 0}$ is a sequence of morphisms.
From now on, we only consider morphisms that are nonerasing.
When all morphisms $\tau_n$ for $n\ge 1$ are proper we say that $\btau$ is {\em proper}; and when all morphisms $\tau_n$ for $n\ge 1$ are of constant length we say that $\btau$ is of {\em constant length}.

For $0 \le n < N$, we denote  by  $\tau_{[n,N)}$ or $\tau_{[n,N-1]}$ the morphism $\tau_n \circ \tau_{n+1} \circ \dots \circ \tau_{N-1}$.
We say $\btau$ is {\em primitive} if for any $n \in \NN$ there exists $N > n$ such that $M_{\tau_{[n,N)}} > 0$, {\em i.e.}, all letters in $\cA_{n}$ occur in $\tau_{[n,N)}(a)$ for all $a\in \cA_{N}$.

For $n \in \N$, the {\em language $\cL^{(n)}({\btau})$ of level $n$ associated with $\btau$} is defined by 
\[
\cL^{(n)}({\btau})
= \big\{w \in \cA_n^\ast : \mbox{$w$ occurs in $\tau_{[n,N)}(a)$ for some $a \in \cA_N$ and $N > n$}\big\}
\]
and $X_{\btau}^{(n)}$ is the set of points $x \in \cA_n^\ZZ$ such that $\cL(x) \subseteq \cL^{(n)}({\btau})$.
When nonempty, this set clearly defines a subshift 
that we call the {\em subshift generated by $\cL^{(n)}({\btau})$}.
We set $X_{\btau} = X_{\btau}^{(0)}$ and call $(X_{\btau}, S)$ the {\em $\cS$-adic subshift} generated by the directive sequence $\btau$.
See \cite[Chapter 6]{DP22} for more details.

A \emph{contraction} of $\btau = (\tau_n : \cA_{n+1}^\ast \to \cA_n^\ast)_{n \ge 0}$ is a directive sequence of the form
\[
\btau'
= (\tau'_k = \tau_{[n_k, n_{k+1})} : \cA_{n_{k+1}}^\ast \to \cA_{n_k}^\ast)_{k \ge 0},
\]
where the sequence $(n_k)_{k \ge 0}$ is such that $n_0 = 0$ and $n_k < n_{k+1}$ for all $k \ge 0$.
Observe that any contraction of $\btau$ generates the same $\cS$-adic subshift $X_{\btau}$.

Inspired by the definition of substitutions with coincidences in \cite{Dek78}, we say a constant length directive sequence $\btau$ {\em has coincidences} if there is a contraction $\btau' = (\tau_k')_{k \ge 0}$ of $\btau$ such that $\coinc(\tau_k') \not= \emptyset$ for all $k \ge 0$.

One says that $\btau$ has {\em finite alphabet rank} when
$\liminf_{n\to+\infty} |\cA_n| < +\infty$.
This notion is related to the {\em topological rank} defined more generally for minimal Cantor systems.
We refer to \cite{DDMP21} for more details about their interaction.

\subsection{Recognizability and dynamical partitions}\label{subsec:reco}

We now present the recognizability property for morphisms and directive sequences in terms of topological partitions.
The usual definition and a more general discussion can be found in \cite{BSTY19, BPR23, BPRS23}.

Let $\tau : \cA^\ast \to \cB^\ast$ be a nonerasing morphism and let $X \subseteq \cA^\ZZ$ be a subshift.
We say that $\btau$ is {\em recognizable} on $X$ if
\[
\PP
= \{S^k \tau([a]) : a \in \cA, \ 0 \le k < |\tau(a)|\}
\]
defines a partition of the subshift $\bigcup_{k \in \ZZ} S^k \tau(X)$.

We say that a directive sequence $\btau = (\tau_n : \cA_{n+1}^\ast \to \cA_n^\ast)_{n \ge 0}$ is recognizable if
\begin{equation}\label{eq:rep_sadic}
\PP_n
= \{S^k \tau_{[0,n)}([a]) :
a \in \cA_n,\
0 \le k < |\tau_{[0,n)}(a)|\}, \enskip
n \ge 0,
\end{equation}
defines a sequence of partitions of $X_{\btau}$.
We have that $\btau$ is recognizable if and only if for each $n \ge 0$ the morphism $\tau_n$ is recognizable on $X_{\btau}^{(n+1)}$ \cite[Section 4]{BSTY19}.

It will be convenient, in order to manipulate these partitions, to consider the following definitions.
For $n \ge 0$ and $a \in \cA_n$, we denote by
\begin{equation}
\label{def:towers}
\TT_n(a)
= \bigcup_{0 \le k < |\tau_{[0,n)}(a)|} S^k \tau_{[0,n)}([a])
\end{equation}
the {\em tower} indexed by $a$ and by $B_n(a)$ the {\em base} $\tau_{[0,n)}([a])$ of this tower.
The {\em base of $\PP_n$} is
\begin{equation} \label{eq:bases}
B_n
= \bigcup_{a \in \cA_n} B_n(a).
\end{equation}
We make here two important observations about $(\PP_n)_{n\geq0}$.
For any $0 \le n < m$,
\begin{enumerate}
    \item the partition $\PP_m$ is finer than the partition $\PP_n$; and
    \item we have that
    \begin{equation}\label{eq:counting_intersections_PP}
    |\{k\in[0,|\tau_{[0,m)}(b)|): S^kB_m(b)\subseteq B_n(a)\}|
    = |\tau_{[n,m)}(b)|_a,
    \enskip a\in\cA_n,\enskip b\in\cA_m.
    \end{equation}
\end{enumerate}

We now state a fundamental theorem for the ergodic and topological study of $\cS$-adic subshifts.

\begin{theo}[{\cite[Lemma 6.3]{BSTY19}}]\label{theo:reco}
Let $\btau = (\tau_n : \cA_{n+1}^\ast \to \cA_n^\ast)_{n \ge 0}$ be a recognizable directive sequence and $\mu$ an invariant measure of $(X_{\btau},S)$.
Then, for $\mu$-almost all $x \in X_{\btau}$, the set $\bigcap_{n \ge 0} S^{k_n} \tau_{[0,n)}([a_n])$ is a singleton, where $(k_n,a_n)_{n \ge 0}$ for $n \ge 0$ is the {\em $(\PP_n)_{n \ge 0}$-address of $x$}, that is, the unique sequence satisfying $x \in S^{k_n} \tau_{[0,n)}([a_n])$, $n \ge 0$.
\end{theo}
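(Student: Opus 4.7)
The plan is to exploit the refining structure of the partitions $(\PP_n)_{n \geq 0}$, together with the shift-invariance of $\mu$, to show that the window of coordinates of $x$ determined by its $(\PP_n)$-address grows to cover all of $\ZZ$ for $\mu$-a.e.\ $x$.

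First I would note that recognizability makes each $\PP_n$ a partition of $X_{\btau}$ and that $\PP_{n+1}$ refines $\PP_n$, so the $(\PP_n)$-address $(k_n, a_n)_{n \geq 0}$ is well-defined for every point. The key combinatorial observation is that if $x \in S^{k_n}\tau_{[0,n)}([a_n])$, then
\begin{equation*}
x_{[-k_n,\, -k_n + |\tau_{[0,n)}(a_n)|)} = \tau_{[0,n)}(a_n),
\end{equation*}
so any other $y \in \bigcap_{n\geq 0} S^{k_n} \tau_{[0,n)}([a_n])$ must agree with $x$ on the interval $I_n = [-k_n,\, -k_n + |\tau_{[0,n)}(a_n)|)$. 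Moreover, decomposing $\tau_{[0,n+1)}(a_{n+1})$ into blocks of the form $\tau_{[0,n)}(b)$ shows that $k_n \leq k_{n+1}$ and $|\tau_{[0,n)}(a_n)| - k_n \leq |\tau_{[0,n+1)}(a_{n+1})| - k_{n+1}$, so the intervals $I_n$ are nested. Hence $\bigcap_{n\geq 0} S^{k_n}\tau_{[0,n)}([a_n])$ reduces to $\{x\}$ as soon as $\bigcup_n I_n = \ZZ$, i.e.\ as soon as both $k_n \to +\infty$ and $|\tau_{[0,n)}(a_n)| - k_n \to +\infty$.

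The core of the argument is to establish these two divergences on a $\mu$-conull set. For each fixed $M \geq 0$ the event $\{k_n \leq M\}$ decomposes as $\bigsqcup_{j=0}^{M} S^j B_n$, where $B_n$ is the base of $\PP_n$ from \eqref{eq:bases}, so shift-invariance gives
\begin{equation*}
\mu(\{k_n \leq M\}) \leq (M+1)\, \mu(B_n),
\end{equation*}
and the analogous argument applied to the top levels of the towers $\TT_n(a)$ yields the same bound for $\mu(\{|\tau_{[0,n)}(a_n)| - k_n \leq M\})$.

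The main obstacle is then to control $\mu(B_n)$. Using that the towers $\TT_n(a)$ partition $X_{\btau}$ one has $\sum_{a \in \cA_n} |\tau_{[0,n)}(a)|\, \mu(B_n(a)) = 1$, hence $\mu(B_n) \leq 1/\min_{a\in\cA_n}|\tau_{[0,n)}(a)|$, which tends to $0$ whenever the morphisms' lengths grow --- as is the case in the constant length, primitive setting used later in the paper. A standard Borel--Cantelli argument, extracting a subsequence $(n_j)$ with $\sum_j \mu(B_{n_j}) < \infty$, then gives $k_{n_j} \to +\infty$ and $|\tau_{[0,n_j)}(a_{n_j})| - k_{n_j} \to +\infty$ for $\mu$-a.e.\ $x$; by the monotonicity noted above both divergences hold along the full sequence, so the nested intersection reduces to $\{x\}$ almost surely.
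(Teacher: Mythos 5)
The paper itself gives no proof of this statement (it is imported verbatim from \cite[Lemma 6.3]{BSTY19}), so your proposal can only be measured against the standard argument, which it follows in outline: reduce the singleton claim to the two divergences $k_n \to +\infty$ and $|\tau_{[0,n)}(a_n)| - k_n \to +\infty$, using that any $y$ in the intersection must agree with $x$ on the nested windows $I_n$, and control the exceptional set via $\mu(\{k_n \le M\}) \le (M+1)\mu(B_n)$. Those steps are correct: the nesting of the $I_n$ and the monotonicity of $k_n$ do follow from the fact that $\PP_{n+1}$ refines $\PP_n$ compatibly with the block decomposition of $\tau_{[0,n+1)}(a_{n+1})$ into words $\tau_{[0,n)}(b)$, and the measure bound is a correct use of shift-invariance on the disjoint levels of the towers.

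The gap is in the last step. You need $\liminf_n \mu(B_n) = 0$, and you justify it only via $\mu(B_n) \le 1/\min_{a \in \cA_n}|\tau_{[0,n)}(a)|$ together with the remark that the lengths grow ``in the constant length, primitive setting used later in the paper.'' That imports a hypothesis that is not in the statement: the theorem is asserted for an arbitrary recognizable directive sequence, and recognizability alone forces neither $\min_{a}|\tau_{[0,n)}(a)| \to +\infty$ nor $\mu(B_n) \to 0$. The bound gives nothing as soon as one letter has bounded image length; for instance with $\tau_n(b) = b$ for all $n$ (say $\tau_n(a) = aab$, $\tau_n(b) = b$), an invariant measure concentrated on the height-one towers $\TT_n(b)$ has $k_n \equiv 0$ and $\mu(B_n)$ bounded away from $0$, the windows $I_n$ never grow, and $\bigcap_n \tau_{[0,n)}([b])$ can contain points other than the fixed point --- so the argument does not merely stall, the conclusion itself is in jeopardy without a growth assumption. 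To make the proof complete you must either add the hypothesis that $\btau$ is everywhere growing, i.e. $\min_{a \in \cA_n}|\tau_{[0,n)}(a)| \to +\infty$ (which is what is really used, and which holds automatically in every situation where the paper invokes the theorem: primitivity over alphabets of size at least two, or constant length with $|\tau_n| \ge 2$ infinitely often), or supply a separate argument for $\liminf_n \mu(B_n) = 0$, which you do not have. A minor remark: Borel--Cantelli is unnecessary here, since $k_n$ is nondecreasing the sets $\{k_n \le M\}$ decrease in $n$, so $\mu(\{\sup_n k_n \le M\}) \le (M+1)\liminf_n \mu(B_n)$ already yields the divergence once the liminf vanishes.
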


We end this section with the following remark: if in addition to recognizability the sequence $\btau$ is assumed to be primitive and proper, then the atoms of $\bigcup_{n \ge 0} \PP_n$ generate the topology of $X_{\btau}$ \cite[Proposition 2.2]{DL12}.

\subsection{Invariant measures}\label{subsec:invariant_measures}

Let $\btau = (\tau_n : \cA_{n+1}^\ast \to \cA_n^\ast)_{n \ge 0}$ be a recognizable directive sequence and $\mu$ be an invariant measure of $(X_{\btau}, S)$.
In this context, \Cref{theo:reco} implies the following.

\begin{cor}
\label{cor:sigmaalg}
The sequence $(\PP_n)_{n \ge 0}$ $\mu$-generates the Borel $\sigma$-algebra of $X_{\btau}$, that is, $\mu$ is uniquely determined by the values it assigns to $B_n(a)$, $a \in \cA_n$, $n \ge 0$.
\end{cor}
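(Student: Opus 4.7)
The plan is to deduce both assertions directly from \Cref{theo:reco}, via a standard approximation argument. Since $\PP_{n+1}$ refines $\PP_n$, the $\sigma$-algebras $\cF_n = \sigma(\PP_n)$ form an increasing filtration, and we set $\cF_\infty = \sigma\bigl(\bigcup_{n \ge 0} \cF_n\bigr)$. The first step is to show that $\cF_\infty$ agrees with the Borel $\sigma$-algebra of $X_{\btau}$ modulo $\mu$-null sets.

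To do this, I will show that every cylinder $C = [w] \subseteq X_{\btau}$ belongs to $\cF_\infty$ modulo $\mu$-null sets; since cylinders generate the Borel $\sigma$-algebra of the subshift, this is enough. For $n \ge 0$, denote by $C_n \in \cF_n$ the union of all atoms of $\PP_n$ contained in $C$. Clearly $C_n \subseteq C_{n+1} \subseteq C$ for all $n$, so $C' = \bigcup_{n \ge 0} C_n$ lies in $\cF_\infty$ and is contained in $C$. For the reverse inclusion modulo a null set, let $x \in C$ be a point for which the conclusion of \Cref{theo:reco} applies and let $(k_n, a_n)_{n \ge 0}$ be its $(\PP_n)_{n \ge 0}$-address. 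Since $C$ is open and $\bigcap_n S^{k_n} \tau_{[0,n)}([a_n]) = \{x\}$, a straightforward compactness argument shows there exists $n$ with $S^{k_n}\tau_{[0,n)}([a_n]) \subseteq C$, i.e. $x \in C_n \subseteq C'$. Since the exceptional set has $\mu$-measure zero, $\mu(C \triangle C') = 0$ and $C \in \cF_\infty$ modulo $\mu$-null sets.

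For the second assertion, observe that for every $n \ge 0$ and every $a \in \cA_n$, the atoms $S^k \tau_{[0,n)}([a])$ for $0 \le k < |\tau_{[0,n)}(a)|$ are pairwise disjoint (they are distinct atoms of the partition $\PP_n$) and $S$-translates of $B_n(a) = \tau_{[0,n)}([a])$. Since $\mu$ is $S$-invariant, we get
\begin{equation*}
\mu\bigl(S^k \tau_{[0,n)}([a])\bigr) = \mu(B_n(a))
\quad \text{for every } 0 \le k < |\tau_{[0,n)}(a)|.
\end{equation*}
Hence $\mu$ is completely determined on each atom of $\PP_n$, and therefore on the algebra $\cR = \bigcup_{n \ge 0} \cF_n$, by the numbers $\mu(B_n(a))$ with $a \in \cA_n$ and $n \ge 0$.

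Finally, by the first step, $\cR$ is a $\mu$-dense subalgebra generating the Borel $\sigma$-algebra up to $\mu$-null sets, so the standard uniqueness of extension of measures (Carathéodory/Hahn extension theorem applied to finite measures) yields that any two invariant probability measures agreeing with $\mu$ on all sets $B_n(a)$ must coincide with $\mu$ on the full Borel $\sigma$-algebra. The main (and only genuine) step is the approximation of cylinders by unions of atoms of the $\PP_n$, but once \Cref{theo:reco} is at hand this is essentially automatic; the $S$-invariance argument reducing atom measures to base measures is then routine.
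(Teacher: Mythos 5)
Your proof is correct and follows exactly the route the paper intends: the paper gives no explicit argument, simply asserting that the corollary follows from \Cref{theo:reco}, and your write-up supplies precisely the standard details (approximation of cylinders by atoms of the nested partitions via the singleton-intersection property, then $S$-invariance to reduce atom measures to base measures, then uniqueness of extension). No gaps; the only point worth keeping in mind is that when comparing $\mu$ with a competitor measure one invokes \Cref{theo:reco} for that measure as well, which is available since the theorem holds for every invariant measure.
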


This motivates the following.
For $n \ge 0$ we define the column vector $\mu_n$ by
\[
\mu_n 
= (\mu_n(a) : a \in \cA_n), \enskip
\text{where} \enskip
\mu_n(a)
= \mu(B_n(a)).
\]

Since $\btau$ is recognizable, we have 
\begin{equation}
\label{eq:inv_measure}
\mu_m = M_{\tau_{[m,n)}} \mu_n, \enskip
0 \le m < n.
\end{equation}
The measure of the tower indexed by $a$ is
\begin{equation}\label{eq:measure_tower}
\mu(\TT_n(a)))
= \mu(B_n(a)) |\tau_{[0,n)}|
= \mu_n(a) |\tau_{[0,n)}|, \enskip
n \ge 0.
\end{equation}

\medskip

When dealing with measure theoretical arguments, it is natural to get rid of small portions of the space.
We will need the definition of {\em adapted} directive sequences to state our main results, where we distinguish those towers with small measure and those with a measure uniformly bounded away from zero.

For each $n \ge 0$ let $\cA'_n \subseteq \cA_n$ be a nonempty alphabet and set
\[
\TT'_n
= \bigcup_{a \in \cA'_n} \TT_n(a), \enskip
\TT'_{\geq n}
= \bigcap_{m \geq n} \TT'_{m}, \enskip
\TT'
= \bigcup_{n \geq 0} \TT'_{\geq n}.
\]
Observe that $\TT' = \liminf_{n \to +\infty} \TT'_n$.

We say that $\btau$ is {\em $(\cA'_n)_{n \ge 0}$-weakly-adapted to $\mu$} if
\begin{equation}
\label{eq:new_adapted}
\sum_{n \ge 0} \mu(X_{\btau}\setminus\TT'_n)
< +\infty.
\end{equation}
If in addition there exists $\delta > 0$ such that
\begin{equation}
\label{eq:new_delta}
\mu(\TT_n(a))
\ge \delta, \enskip a \in \cA'_n, \enskip n \ge 0 ,
\end{equation}
we say that $\btau$ is {\em $(\cA'_n)_{n \ge 0}$-adapted to $\mu$} (for the constant $\delta$).
Observe that in such case $\sup_{n \ge 0} |\cA'_n |$ is bounded by $1/\delta$.

When $\btau$ has finite alphabet rank, up to a contraction and a renaming of the alphabets, there exists $\cA_\mu \subseteq \cA_n$ for all $n \ge 0$ such that $\btau$ is $(\cA_\mu)$-adapted to $\mu$.

The previous definitions are inspired by the definition of {\em clean Bratteli diagrams} in \cite{BDM10} (see also \cite[Section 3]{BKMS13}).

In the weakly-adapted case, the Borel--Cantelli lemma implies that
\begin{equation} \label{eq:measure_TT'}
\mu(\TT') = 1.
\end{equation}

\subsection{Convergence of frequencies}
It is a well known fact that for substitutions systems the measures $\mu(B_n(a))$ correspond to the letter frequencies of the iterates of the substitution \cite[Chapter 5]{Que87}.
A similar result was proved in \cite[Proposition 5.1]{BKMS13} in the more general context of uniquely ergodic Bratteli--Vershik systems.

In this paper, we will need an extension of these results that is valid for the class of non uniquely ergodic systems we later consider.

\begin{prop}\label{prop:mu_generic_and_freqs}
Let $\btau = (\tau_n : \cA_{n+1}^\ast \to \cA_n^\ast)_{n \ge 0}$ be a primitive and recognizable directive sequence, let $\mu$ an ergodic measure for $(X_{\btau},S)$ and let $(b_m)_{m \ge 0}$ a sequence of letters with $b_m$ in $\cA_m$.
Suppose that any of the following conditions hold:
\begin{enumerate}
    \item \label{item:mu_generic_1}
    there exists a $\mu$-generic point $x$ in $X_{\btau}$ such that $x \in \TT_m(b_m)$ for all $m \ge 0$;
    
    \item \label{item:mu_generic_2}
    $\liminf_{m \to +\infty} \mu(\TT_m(b_m)) > 0$;
    
    \item \label{item:mu_generic_3}
    $\btau$ is $(\cA'_n)_{n \ge 0}$-adapted to $\mu$ and $b_m$ belongs to $\cA'_m$ for all $m \ge 0$;
    
    \item \label{item:mu_generic_4}
    the system $(X_{\btau}, S)$ is uniquely ergodic.
\end{enumerate}
Then, for any $n\geq0$ and $a\in\cA_n$, we have
\begin{equation}
\label{eq:mu_generic_and_freqs}
\lim_{m\to+\infty}
\frac{|\tau_{[n,m)}(b_m)|_a}{|\tau_{[0,m)}(b_m)|}
= \mu(B_n(a)).
\end{equation}
\end{prop}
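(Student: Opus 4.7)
Plan. The argument hinges on a direct probabilistic identification of the target ratio. Writing $h_m = |\tau_{[0,m)}(b_m)|$ and $c_m = |\tau_{[n,m)}(b_m)|_a / h_m$, recognizability decomposes the tower as $\TT_m(b_m) = \bigsqcup_{k=0}^{h_m-1} S^k B_m(b_m)$ with each level of $\mu$-measure $\mu_m(b_m)$; by \eqref{eq:counting_intersections_PP}, exactly $|\tau_{[n,m)}(b_m)|_a$ of these levels lie in $B_n(a)$. Hence
\[
c_m \,=\, \frac{\mu(B_n(a) \cap \TT_m(b_m))}{\mu(\TT_m(b_m))} \,=\, A_{h_m}\mathbf{1}_{B_n(a)}(y) \qquad (y \in B_m(b_m)),
\]
where $A_N f := \frac{1}{N}\sum_{k=0}^{N-1} f\circ S^k$. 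Primitivity forces $h_m \to +\infty$, so the claim reduces to $c_m \to \mu(B_n(a))$. Cases (4) and (3) are then quick: under unique ergodicity, Oxtoby's theorem yields uniform convergence $A_N\mathbf{1}_{B_n(a)} \to \mu(B_n(a))$; the adapted hypothesis in (3) gives $\mu(\TT_m(b_m)) = h_m \mu_m(b_m) \ge \delta$, reducing it to case (2).

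For case (2), I proceed by contradiction: suppose $|c_m - \mu(B_n(a))| \ge \epsilon$ along an infinite set $M \subseteq \N$. The telescoping estimate
\[
\left| A_{h_m}\mathbf{1}_{B_n(a)}(S^k y) - c_m \right| \le k/h_m, \qquad y \in B_m(b_m),\ k \in [0, h_m),
\]
then shows $|A_{h_m}\mathbf{1}_{B_n(a)} - \mu(B_n(a))| \ge \epsilon/2$ on $\bigsqcup_{k=0}^{\lfloor \epsilon h_m/2\rfloor - 1} S^k B_m(b_m) \subseteq \TT_m(b_m)$, a subset of $\mu$-measure at least $(\epsilon/2)\mu(\TT_m(b_m)) - \mu_m(b_m)$, which is bounded below by $\epsilon\alpha/3$ for $m \in M$ large. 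This contradicts convergence in $\mu$-probability of $A_N\mathbf{1}_{B_n(a)}$ to $\mu(B_n(a))$, a consequence of the $L^1$ ergodic theorem.

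Case (1) is handled by writing $x = S^{j_m}y_m$ with $y_m \in B_m(b_m)$ and $j_m \in [0, h_m)$, which yields
\[
c_m = A_{h_m}\mathbf{1}_{B_n(a)}(y_m) = \frac{1}{h_m} \sum_{i=-j_m}^{h_m-j_m-1} \mathbf{1}_{B_n(a)}(S^i x),
\]
a convex combination of the forward Birkhoff average of length $h_m - j_m$ and the backward Birkhoff average of length $j_m$ of $\mathbf{1}_{B_n(a)}$ at $x$, with weights $(h_m-j_m)/h_m$ and $j_m/h_m$. Taking $x$ to be $\mu$-generic for both $S$ and $S^{-1}$---a harmless strengthening since $\mu$-a.e.\ point has this property in the invertible ergodic system $(X_{\btau}, S, \mu)$---each one-sided average converges to $\mu(B_n(a))$ as its length tends to infinity; a brief case analysis depending on which of $j_m$, $h_m - j_m$ remains bounded delivers $c_m \to \mu(B_n(a))$. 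The main obstacle lies in the contradiction argument of case (2), which must exploit the rigid tower structure through the cycling estimate in order to upgrade the lower bound on $\mu(\TT_m(b_m))$ to a lower bound on the measure of the ``bad'' set.
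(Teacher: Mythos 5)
Your proposal is correct, and its backbone is the same as the paper's: identify $|\tau_{[n,m)}(b_m)|_a/|\tau_{[0,m)}(b_m)|$ with a Birkhoff average of $\mathbf{1}_{B_n(a)}$ over one sweep of the tower $\TT_m(b_m)$, using \eqref{eq:counting_intersections_PP} and the refinement $\PP_m \succeq \PP_n$. Your case (1) is essentially the paper's argument (the paper splits the window $[i_m,j_m)\ni 0$ around the generic point and likewise needs two-sided genericity, which it glosses over exactly as you do), and your reductions of (3) and (4) are the same routine ones, modulo your invoking Oxtoby's uniform convergence instead of "every point is generic". The genuine divergence is case (2): the paper sets $K=\limsup_{m\in E}\TT_m(b_m)$, notes $\mu(K)>0$, picks a $\mu$-generic point in $K$, and feeds it back into case (1) along a subsequence; you instead argue directly from the mean ergodic theorem, using the almost-invariance estimate $|A_{h_m}\mathbf{1}_{B_n(a)}(S^ky)-c_m|\le k/h_m$ to spread the discrepancy over the bottom $\lfloor\epsilon h_m/2\rfloor$ levels of the tower and contradict convergence in probability. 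Both are valid; the paper's route is shorter because it recycles case (1), while yours is self-contained for case (2) and makes quantitatively explicit where the lower bound $\liminf_m\mu(\TT_m(b_m))>0$ enters (your undefined constant $\alpha$ should of course be this liminf, or anything slightly smaller). Neither version has a gap the other avoids.
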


\begin{proof}
Let $x$ be a $\mu$-generic point such that $x$ belongs to $\TT_m(b_m)$ for all $m \ge 0$.
We define $i_m = \max\{k\le0: S^k x\in B_m\}$ and $j_m = \min\{k>0: S^kx\in B_m\}$.
Observe that $j_m- i_m = |\tau_{[0,m)}(b_m)|$.
Since $\btau$ is recognizable, from \eqref{eq:counting_intersections_PP} we get, for $n<m$,
\[
|\{k\in[0,|\tau_{[0,m)}(b_m)|): S^kB_m(b_m)\subseteq B_n(a)\}|
= |\tau_{[n,m)}(b_m)|_a, \enskip
b_m \in \cA_m.
\]
This implies, as $S^{i_m}x$ and $S^{j_m}x$ are in $B_m(b_m)$ and $\cP_m$ is finer than $\cP_n$, that
$$|\{k\in[i_m,j_m):S^kx\in B_n(a)\}| = |\tau_{[n,m)}(b_m)|_a. $$
The left-hand side corresponds to the ergodic sum $\sum_{k\in[i_m,j_m)} 1_{B_n(a)}(S^kx)$ of the indicator function $1_{B_n(a)}$. 
Hence, since $x$ is $\mu$-generic, we conclude that
$$
\lim_{m\to+\infty} 
\frac{|\tau_{[n,m)}(b_m)|_a}{|\tau_{[n,m)}(b_m)|} = 
\lim_{m\to+\infty} 
\frac{1}{j_m - i_m}
\sum_{k\in[i_m,j_m)} 1_{B_n(a)}(S^kx) = \mu(B_n(a))
$$
and \eqref{eq:mu_generic_and_freqs} holds.

\smallskip

Now suppose that $\liminf_{m \to +\infty} \mu(\TT_m(b_m)) > 0$.
We assume, with the aim to obtain a contradiction, that there exist $n\geq0$ and $a\in\cA_n$ such that $\tfrac{|\tau_{[n,m)}(b_m)|_a}{|\tau_{[0,m)}(b_m)|}$ is bounded away from $\mu(B_n(a))$ for all $m$ in an infinite set $E\subseteq\N$.

Let 
\[
K
= \limsup_{\substack{m \to +\infty \\ m\in E}}
\cT_m(b_m).
\]
From our hypothesis, we have $\mu(K) > 0$ and hence that there is a $\mu$-generic point $x$ in $K$.
We have that $x$ is in $\cT_m(b)$ for all $m$ belonging to an infinite set $E' \subseteq E$.
But then from \Cref{item:mu_generic_1} in \Cref{prop:mu_generic_and_freqs} the sequence $\Bigl(\tfrac{|\tau_{[n,m)}(b_m)|_a}{|\tau_{[0,m)}(b)|} : m \in E'\Bigr)$ is arbitrarily close to $\mu(B_n(a))$, which contradicts our previous assumption.
This shows that \eqref{eq:mu_generic_and_freqs} holds.

\smallskip

Next, we suppose that $\btau$ is $(\cA'_n)_{n \ge 0}$-adapted to $\mu$ and $b_m \in \cA'_m$ for all $m \ge 0$.
Then, from \eqref{eq:new_delta} we get $\liminf_{m \to +\infty} \mu(\TT_m(b_m)) > 0$ and thus \eqref{eq:mu_generic_and_freqs} holds by \Cref{item:mu_generic_2} of this proposition.

\smallskip

Finally, if the system $(X_{\btau}, S)$ is uniquely ergodic, then any point is $\mu$-generic and \eqref{eq:mu_generic_and_freqs} holds for any sequence of letters $b_m$ in $\cA_m$ from \Cref{item:mu_generic_1} in \Cref{prop:mu_generic_and_freqs}.
\end{proof}

%%%%%%%%%%%%%%%%%%%%%%%%
\subsection{Toeplitz sequences as \SS-adic subshifts}
%%%%%%%%%%%%%%%%%%%%%%%%

In \cite[Theorem 8]{GJ00} it is shown that the class of Toeplitz subshifts coincides, up to topological conjugacy, with the class of expansive Bratteli--Vershik systems defined on diagrams with the {\em equal path number property}.
This is the starting point of the link between Toeplitz subshifts and $\cS$-adic subshifts that is summarized in the proposition below.
This will be used throughout the article.
We follow the proof in \cite[Theorem 6.1.7]{DP22}.

\begin{prop}
\label{prop:toeplitzSadic}
Let $(X, S)$ be a subshift.
The following are equivalent.
\begin{enumerate}
    \item \label{item:toeplitzSadic_1}
    $(X, S)$ is a Toeplitz subshift;
    \item \label{item:toeplitzSadic_2}
    $(X, S)$ is generated by a constant length, primitive, proper and recognizable directive sequence; and
    \item \label{item:toeplitzSadic_3}
    $(X, S)$ is generated by a constant length, primitive and recognizable directive sequence with coincidences. 
\end{enumerate}
\end{prop}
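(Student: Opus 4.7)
The proof runs as the chain $(1) \Rightarrow (2) \Rightarrow (3) \Rightarrow (1)$. The implication $(1) \Rightarrow (2)$ I take from \cite[Theorem 6.1.7]{DP22} (following \cite[Theorem 8]{GJ00}), which constructs a proper presentation of a Toeplitz subshift from its Bratteli--Vershik model with the equal path number property. For $(2) \Rightarrow (3)$: properness of $\tau_n$ (common first letter $p$, common last letter $s$) gives $\{0, |\tau_n|-1\} \subseteq \coinc(\tau_n)$, so every $\tau_n$ with $n \ge 1$ has coincidences; a possibly non-proper $\tau_0$ is absorbed by contracting the first two levels to $\tau_0 \circ \tau_1$, which inherits properness (and hence coincidences) from $\tau_1$.

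The substantive direction is $(3) \Rightarrow (1)$. After passing to the witness contraction, I may assume $\coinc(\tau_n) \neq \emptyset$ for all $n \ge 0$; write $h_n = |\tau_{[0,n)}|$. Using the recognizable partitions $(\PP_n)_{n \ge 0}$ of \Cref{theo:reco} together with the constant length hypothesis, the column-offset assignment $x \mapsto (k_n(x))_{n \ge 0}$ (where $x \in S^{k_n(x)} \tau_{[0,n)}([a_n(x)])$) defines a continuous shift-equivariant factor map
\[
\pi \colon (X_{\btau}, S) \to \bigl(\varprojlim\nolimits_n \ZZ/h_n\ZZ, +1\bigr).
\]
The strategy is to show that $\pi$ is almost one-to-one and to extract Toeplitz elements from its singleton fibers.

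The mechanism is the following. For $x \in \pi^{-1}(z)$, the value $x_i$ equals the $j$-th letter of $\tau_{[0,n)}(a_n^x)$, where $j = (i + z_n) \bmod h_n$ and $a_n^x$ is the tower letter containing position $0$. If $j \in \coinc(\tau_{[0,n)})$, this value does not depend on $a_n^x$; thus any two points of $\pi^{-1}(z)$ must agree at $i$, and moreover $i \in \Per_{h_n}(x)$. Accordingly, for each $i \in \ZZ$ I define the closed set
\[
B_i = \bigl\{z : (i + z_n) \bmod h_n \notin \coinc(\tau_{[0,n)}) \text{ for all } n \ge 0\bigr\}.
\]
For any $z \notin \bigcup_{i \in \ZZ} B_i$ the fiber $\pi^{-1}(z)$ is a singleton whose unique element is Toeplitz, and minimality of $X_{\btau}$ (from primitivity) then makes $X_{\btau}$ the orbit closure of a Toeplitz sequence, i.e.\ a Toeplitz subshift.

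It remains to show each $B_i$ is nowhere dense in the odometer. Given a basic clopen $U = \{z : z_n = k\}$ and any $c_{m-1} \in \coinc(\tau_{m-1})$ with $m > n$, \Cref{lem:inclusion_coinc} yields the contiguous block $[c_{m-1} h_{m-1},\,(c_{m-1}+1) h_{m-1}) \subseteq \coinc(\tau_{[0,m)})$; primitivity of $\btau$ forces $h_{m-1} \to \infty$, so for $m$ large enough this block has length at least $h_n$ and therefore meets every residue class modulo $h_n$, producing a clopen sub-neighborhood of $U$ disjoint from $B_i$. Baire's theorem then yields a residual (hence nonempty) set of good $z$'s, closing the argument. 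The main obstacle I anticipate is exactly this nowhere-dense step: it requires combining the coincidence propagation of \Cref{lem:inclusion_coinc} with the growth of tower heights, and then aligning the residue classes of the odometer with the partition structure inherited from recognizability.
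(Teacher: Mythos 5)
Your proof is correct, but your argument for the substantive implication \Cref{item:toeplitzSadic_3}$\Rightarrow$\Cref{item:toeplitzSadic_1} takes a genuinely different route from the paper's. The paper argues directly and constructively: it picks a point $x$ in $\bigcap_n \tau_{[0,n)}([\cA_n.\cA_n])$, shifts it so that the origin sits in the middle of the coincidence column $k_n$ of $\tau_n$ (namely $x_n = S^{k_n|\tau_{[0,n)}|+\ell_n}x$), observes that the central window of length $|\tau_{[0,n)}|$ then repeats with period $|\tau_{[0,n+1)}|$, and takes an accumulation point, which is Toeplitz. You instead pass through the factor map $\pimeq$ onto the odometer and run a Baire category argument on the sets $B_i$ of ``bad'' odometer points, using \Cref{lem:inclusion_coinc} to propagate a single coincidence of $\tau_{m-1}$ into a contiguous coincidence block of length $|\tau_{[0,m-1)}|$ in $\tau_{[0,m)}$ and then aligning residues modulo $|\tau_{[0,n)}|$; this is heavier machinery but buys more, namely a residual set of singleton $\pimeq$-fibers consisting of Toeplitz points, which foreshadows the quantitative fiber analysis of \Cref{theo:charac_MEF_is_regular}. (One phrasing slip: the letter governing $x_i$ is the label of the tower block containing position $i$, not position $0$; the coincidence hypothesis makes $x_i$ independent of that label across all blocks, which is exactly what yields $i\in\Per_{h_n}(x)$, so the argument stands.) The other real difference is \Cref{item:toeplitzSadic_1}$\Rightarrow$\Cref{item:toeplitzSadic_2}: you outsource it to \cite{GJ00,DP22}, whereas the paper does most of its work precisely there, proving a self-contained claim that extracts from the periodic structure of a Toeplitz sequence a positive constant-length morphism with coincidences that is recognizable (via the clopen partition $\{S^iC\}_{0\le i<p}$ built from $\Per_p(\cdot,U)$) and iterating it. Both your proof and the paper's tacitly use that $|\tau_{[0,n)}|\to\infty$ and that the resulting subshift is not periodic; this is a shared, harmless omission.
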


\begin{proof}
It is clear that \Cref{item:toeplitzSadic_2} implies \Cref{item:toeplitzSadic_3}.
We now prove that \Cref{item:toeplitzSadic_3} implies \Cref{item:toeplitzSadic_1}.
Let $\btau = (\tau_n : \cA_{n+1}^\ast \to \cA_n^\ast)_{n \ge 0}$ be a constant length, primitive, proper, and recognizable directive sequence.
For each $n \ge 0$, let $k_n$ be a coincidence of $\tau_n$, $\ell_n = \lfloor |\tau_{[0,n)}| / 2 \rfloor$, $r_n = |\tau_{[0,n)}| - \ell_n$ and $x$ a point belonging to $\cap_{n \ge 0} \tau_{[0,n)}([\cA_n . \cA_n])$.
If we define the sequence of points $x_n = S^{k_n |\tau_{[0,n)}| + \ell_n} x$, $n \ge 0$, we observe that
\[
(x_n)_{[-\ell_n, r_n)}
= (x_n)_{[- \ell_n + k \cdot |\tau_{[0, n+1)}|, r_n + k \cdot |\tau_{[0, n+1)}|)}, \enskip
k \in \ZZ.
\]
This implies that any accumulation point of the sequence $(x_n)_{n \ge 0}$ is a Toeplitz sequence, and hence $(X_{\btau}, S)$ is a Toeplitz subshift.

It remains to prove that \Cref{item:toeplitzSadic_1} implies \Cref{item:toeplitzSadic_2}.

\begin{claim}
Let $x\in \cA^\Z$ be a Toeplitz sequence.
Then there exists a nonempty alphabet $\cB$, a constant length and positive morphism with coincidences $\tau : \cB^\ast \to \cA^\ast$ and a Toeplitz sequence $y \in \cB^\ZZ$ such that $x = \tau(y)$.
Moreover, the morphism $\tau$ is recognizable on the subshift $Y$ generated by $y$.  
\end{claim}

\begin{proof}
Let $m \in \NN$ be large enough so that all letters that occur in $x$ also occur in $x_{[-m, m)}$.
Define $u = x_{[-m,0)}$ and $v = x_{[0,m)}$.
As $x$ is a Toeplitz sequence, there exists a minimal $p > 2m$ such that
\[
x_{[-m + kp, m + kp)}
= x_{[-m, m)}
= uv, \enskip
k \in \ZZ.
\]
Define $\cB'$ be the set of words $\{x_{[kp, (k+1)p)} : k \in \ZZ\} \subseteq \cA^*$.
Let $\tau : \cB \to \cB'$ be a bijection, where $\cB$ is an alphabet.
The map $\tau$ of course defines a morphism from $\cB^*$ to $\cA^*$.

We have that $\tau$ is positive, with coincidences and $|\tau| = p$.
It is clear that there exists a unique point $y \in \cB^\ZZ$ such that $x = \tau(y)$.
Since $x$ is Toeplitz, we have that $y$ is Toeplitz.
We are left to prove that $\tau$ is recognizable on the subshift $Y$ generated by $y$.

Consider the cylinder $U = [u.v]$ in $X$ and define
\[
\Per_q(x', U)
= \{n \in \ZZ : S^{n + kq}(x') \in U,\ k \in \ZZ\}, \enskip
x' \in X, \enskip
q \in \NN.
\]
Since $p$ is minimal we have $\Per_q(x, U) = \emptyset$ if $q < p$ and $\Per_p(x, U) \not= \emptyset$.
The set $\{x' \in X : \Per_p(x', U) \not= \emptyset\}$ is nonempty, closed and $S$-invariant, hence it is equal to $X$ as $(X, S)$ is minimal.
From this we see that if
\[
C
= \{x' \in X : \Per_p(x', U) = \Per_p(x, U)\}
\]
then $\{S^i C : 0 \le i < p\}$ is a clopen partition of $X$.
Finally, since $(X, S)$ is minimal one has $C = \bigcup_{b \in \cB} \tau([b])$, where the cylinders $[b]$, $b \in \cB$, are considered in $Y$.
This implies that
\[
\{S^k \tau([b]) : b \in \cB, \ 0 \le k < p\}
\]
is a clopen partition of $X$, and thus $\tau$ is recognizable on $(Y, S)$, finishing the proof of the claim.
\end{proof}
By the claim, we inductively construct a sequence of alphabets $(\cB_n)_{n \ge 0}$ with $\cB_0 = \cA$; a sequence of Toeplitz sequences $(y_n)_{n \ge 0}$ with $y_n \in \cB_n^\ZZ$ for $n \ge 0$ such that $y_0 = x$; and a directive sequence of morphisms $\btau = (\tau_n : \cB_{n+1}^\ast \to \cB_n^\ast)_{n \ge 0}$ which is of constant length, primitive, with coincidences and such that for all $n \ge 0$ the morphism $\tau_n$ is recognizable on the subshift generated by $y_n$.
Since we have $y_n = \tau_{[n, N)}(y_N)$ for all $0 \le n < N$, we see that $y_n$ generates the subshift $X_{\btau}^{(n)}$.
In particular, $\btau$ is recognizable and $X = X_{\btau}$, which finishes the proof.
\end{proof}

%%%%%%%%%%%%%%%%%%%%%%%%%%%%%%%%%%
\subsection{The maximal equicontinuous topological factor}
\label{subsec:meq}
%%%%%%%%%%%%%%%%%%%%%%%%%%%%%%%%%%

\subsubsection{Odometers}

For a sequence of positive integers $(p_n)_{n \ge 0}$ such that $p_n$ divides $p_{n+1}$ for $n \ge 0$, the \emph{odometer} given by $(p_n)_{n \ge 0}$ is the system $(\ZZ_{(p_n)_{n \ge 0}}, +1)$, where
\[
\ZZ_{(p_n)_{n \ge 0}}
= \varprojlim \ZZ / p_n \ZZ
= \bigg\{ (x_n)_{n \ge 0} \in \prod_{n \ge 0} \ZZ / p_n \ZZ : x_{n+1} \equiv x_n \pmod{p_n},\ n \ge 0 \bigg\}
\]
and the map ``$+1$'' is given by
\[
(x_n)_{n \ge 0}
\mapsto (x_n + 1 \pmod{p_n})_{n \ge 0}.
\]
If $p_n = p^n$ for some $p \ge 1$ and all $n \ge 0$, we simply denote $Z_{(p_n)_{n \ge 0}}$ by $\ZZ_p$.

Odometers are, in particular, minimal rotations on compact abelian groups.
They are equicontinuous and uniquely ergodic, where the unique ergodic measure is the Haar measure.
In particular, the measurable system given by the odometer has discrete spectrum.

For $\ell \ge 0$ and $z = (z_n)_{n \ge 0} \in \ZZ_{(p_n)_{n \ge 0}}$ we define the cylinder
\[
[z_0, z_1, \ldots, z_{\ell - 1}]
= \{(x_n)_{n \ge 0} \in \ZZ_{(p_n)_{n \ge 0}} : x_k = z_k,\ 0 \le k < \ell\}.
\]
The Haar measure $\nu$ on $(\ZZ_{(p_n)_{n \ge 0}}, +1)$ satisfies
\begin{equation}\label{eq:haar}
\nu([z_0, z_1, \ldots, z_{\ell - 1}])
= \frac{1}{p_0 p_1 \ldots p_{\ell - 1}}.
\end{equation}

We finally remark that the odometer is {\em coalescent} as a measure theoretical system, that is, every measurable automorphism of the odometer is an isomorphism.
This follows since it has discrete spectrum \cite{HP68}.

\subsubsection{The maximal equicontinuous topological factor}

Let $\btau = (\tau_n)_{n \ge 0}$ be a constant length, primitive, and recognizable directive sequence with coincidences.
It is a well-known fact that the maximal equicontinuous topological factor of $(X_{\btau}, S)$ corresponds to the odometer $(\ZZ_{(|\tau_{[0,n)}|)_{n \ge 0}}, +1)$.
Indeed, this follows from \Cref{prop:toeplitzSadic} and \cite[Theorem 2.2]{Wil84}.

The factor map $\pimeq : X_{\btau} \to \ZZ_{(|\tau_{[0,n)}|)_{n \ge 0}}$ can be described as follows.
Let $x \in X_{\btau}$.
By recognizability of $\btau$, for every $n \ge 0$ there exists $k_n(x)$ with $0 \le k_n(x) < |\tau_{[0,n)}|$ such that $x
\in S^{k_n(x)} B_n$, where $B_n$ is the base given by \eqref{eq:bases}.
Then we define
\begin{equation}\label{eq:pieq}
\pimeq(x)
= (k_n(x))_{n \ge 0}.
\end{equation}
It can be observed that $k_{n+1}(x) \equiv k_n(x) \pmod{|\tau_{[0,n)}|}$ for $x \in X_{\btau}$.
In order to help the reader, it is important to notice the following relation
\begin{equation}\label{eq:cylinder_odometer}
\pimeq^{-1}([z_0, z_1, \ldots, z_{\ell}])
= S^{z_{\ell}} B_\ell, \enskip
(z_n)_{n \ge 0} \in \ZZ_{(|\tau_{[0,n)}|)_{n \ge 0}}, \enskip
\ell \ge 0.
\end{equation}

The following lemma will be useful.
\begin{lem} \label{lem:measure_cylinders}
Let $0 \le n < m$ and $A_{n,m}$ be a subset of $[0, |\tau_{[n,m}|)$.
Then
\[
\nu(\{z \in \ZZ_{(|\tau_{[0,n)}|)_{n \ge 0}} : z_m \in z_n + |\tau_{[0,n)}| A_{n,m}\})
= \frac{|A_{n,m}|}{|\tau_{[n,m)}|}.
\]
\end{lem}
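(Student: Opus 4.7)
My plan is to decompose the set in question as a disjoint union of cylinders of level $m$ in the odometer and then apply the Haar measure formula \eqref{eq:haar}.

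First I would use that $\btau$ is of constant length to get the identity $|\tau_{[0,m)}| = |\tau_{[0,n)}| \cdot |\tau_{[n,m)}|$, and observe that by definition of the inverse limit, any $z = (z_k)_{k \ge 0} \in \ZZ_{(|\tau_{[0,k)}|)_{k \ge 0}}$ satisfies $z_m \equiv z_n \pmod{|\tau_{[0,n)}|}$. Therefore, viewing $z_n$ as an integer in $[0, |\tau_{[0,n)}|)$ and $z_m$ as an integer in $[0, |\tau_{[0,m)}|)$, the difference $z_m - z_n$ is a nonnegative multiple of $|\tau_{[0,n)}|$, and the assignment $a = (z_m - z_n)/|\tau_{[0,n)}|$ is a well defined element of $[0, |\tau_{[n,m)}|)$. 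The condition $z_m \in z_n + |\tau_{[0,n)}| A_{n,m}$ then becomes exactly $a \in A_{n,m}$.

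Next I would parametrize the set under study by pairs $(z_n, a) \in [0, |\tau_{[0,n)}|) \times A_{n,m}$. Once $z_n$ is fixed, the coordinates $z_0, \ldots, z_{n-1}$ are forced by the reductions modulo $|\tau_{[0,k)}|$ for $k < n$; and once $z_m = z_n + |\tau_{[0,n)}| a$ is fixed, the coordinates $z_{n+1}, \ldots, z_{m-1}$ are likewise forced. Hence the set decomposes as the disjoint union
\[
\bigsqcup_{z_n=0}^{|\tau_{[0,n)}|-1} \bigsqcup_{a \in A_{n,m}} [z_0, z_1, \ldots, z_{m}],
\]
with the $z_k$'s in each cylinder determined by $(z_n, a)$ as described.

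Finally I would apply \eqref{eq:haar}: each such cylinder of level $m+1$ has Haar measure $1/|\tau_{[0,m)}|$. Summing $|\tau_{[0,n)}| \cdot |A_{n,m}|$ copies yields
\[
\frac{|\tau_{[0,n)}| \cdot |A_{n,m}|}{|\tau_{[0,m)}|}
= \frac{|A_{n,m}|}{|\tau_{[n,m)}|},
\]
which is the desired identity. There is no real obstacle here beyond the bookkeeping of odometer coordinates; the only subtle point is the check that the cylinders in the decomposition are genuinely disjoint and exhaust the set, which follows from the uniqueness of the reduction $z_k = z_m \bmod |\tau_{[0,k)}|$ for $k \le m$.
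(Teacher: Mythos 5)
Your proof is correct and follows essentially the same route as the paper: both decompose the set into $|\tau_{[0,n)}|\cdot|A_{n,m}|$ disjoint cylinders determined by the $m$-th coordinate, each of Haar measure $1/|\tau_{[0,m)}|$, and sum. You simply spell out the parametrization by pairs $(z_n, a)$ that the paper leaves implicit.
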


\begin{proof}
Observe that, from \eqref{eq:haar}, the set $\{z \in \ZZ_{(|\tau_{[0,n)}|)_{n \ge 0}} : z_m \in z_n + |\tau_{[0,n)}| A_{n,m}\}$ is a disjoint union of $|\tau_{[0,n)}| \cdot |A_{n,m}|$ cylinders of measure $1 / |\tau_{[0,m)}|$.
We obtain
\[
\nu(\{z \in \ZZ_{(|\tau_{[0,n)}|)_{n \ge 0}} : z_m \in z_n + |\tau_{[0,n)}| A_{n,m}\})
= \frac{|A_{n,m}|}{|\tau_{[n,m)}|}.
\]
\end{proof}

%%%%%%%%%%%%%%%%%%%%%%%%%%%%%%%%%%%%%%%%%%%%%%%%%%%%%%%%%%%%%%%%
\section{About unique ergodicity of \SS-adic subshifts of constant length}
%%%%%%%%%%%%%%%%%%%%%%%%%%%%%%%%%%%%%%%%%%%%%%%%%%%%%%%%%%%%%%%%

In this section we give a necessary and sufficient condition for unique ergodicity in the specific case of $\cS$-adic subshifts generated by constant length and recognizable directive sequences. 
This condition is in terms of the matrices of the directive sequence and can be check relatively easily.

\begin{lem} \label{lem:series_lemma}
Let $(a_{n, N} : 0 \le n < N)$ be a doubly indexed sequence in $[0,1]$ that satisfies
\begin{equation} \label{eq:ineq_product}
a_{l, n}
\le a_{l, m} a_{m, n}, \enskip
l < m < n.
\end{equation}
The following are equivalent.
\begin{enumerate}
    \item \label{item:series_1}
    \[
    \lim_{N \to +\infty} a_{n, N}
    = 0, \enskip
    \text{for all $n \ge 0$};
    \]
    
    \item \label{item:series_2}
    there exists an increasing sequence $(n_k)_{k \ge 0}$ such that
    \[
    \lim_{K \to +\infty} a_{n_k, n_K}
    = 0, \enskip
    \text{for all $k \ge 0$; and}
    \]
    
    \item \label{item:series_3}
    there exists an increasing sequence $(n_k)_{k \ge 0}$ such that
    \[
    \sum_{k \ge 0} a_{n_k, n_{k+1}}
    < +\infty.
    \]
\end{enumerate}
\end{lem}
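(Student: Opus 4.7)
The plan is to prove the cycle \Cref{item:series_1} $\Rightarrow$ \Cref{item:series_2} $\Rightarrow$ \Cref{item:series_3} $\Rightarrow$ \Cref{item:series_1}. The first two implications are essentially bookkeeping, while the third uses the submultiplicativity \eqref{eq:ineq_product} in an essential way and is where I expect the real work.

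For \Cref{item:series_1} $\Rightarrow$ \Cref{item:series_2}, the sequence $n_k = k$ works tautologically. For \Cref{item:series_2} $\Rightarrow$ \Cref{item:series_3}, starting from a sequence $(m_k)_{k \ge 0}$ witnessing \Cref{item:series_2}, I would extract a further subsequence $n_j = m_{k_j}$ greedily: having chosen $n_0, \dots, n_j$, the convergence $a_{n_j, m_k} \to 0$ from \Cref{item:series_2} lets me pick $k_{j+1} > k_j$ so that $a_{n_j, n_{j+1}} < 2^{-j}$. The resulting sum is dominated by a geometric series.

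The main content is \Cref{item:series_3} $\Rightarrow$ \Cref{item:series_1}. The key step is to iterate \eqref{eq:ineq_product} to obtain
\begin{equation*}
a_{n_k, n_m}
\le \prod_{j = k}^{m-1} a_{n_j, n_{j+1}}, \enskip
k < m.
\end{equation*}
Since the sum $\sum_j a_{n_j, n_{j+1}}$ converges, its terms tend to $0$; in particular there is $j_0$ with $a_{n_j, n_{j+1}} \le 1/2$ for $j \ge j_0$, so the above product is at most $2^{k-m}$ once $k \ge j_0$, hence $a_{n_k, n_m} \to 0$ as $m \to \infty$ for every $k \ge j_0$. A final application of \eqref{eq:ineq_product} in the form $a_{n_k, n_m} \le a_{n_k, n_{j_0}} a_{n_{j_0}, n_m}$ extends this convergence to every $k \ge 0$.

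It then remains to pass from convergence along the subsequence $(n_k)$ to convergence of the full sequence $a_{n,N}$ for arbitrary $n$. Given $n \ge 0$ and $\epsilon > 0$, I pick $k$ with $n_k > n$ and then $K > k$ with $a_{n_k, n_K} < \epsilon$. For every $N > n_K$, applying \eqref{eq:ineq_product} twice together with the bounds $a_{n, n_k} \le 1$ and $a_{n_K, N} \le 1$ gives
\begin{equation*}
a_{n, N}
\le a_{n, n_k}\, a_{n_k, n_K}\, a_{n_K, N}
\le a_{n_k, n_K}
< \epsilon,
\end{equation*}
proving \Cref{item:series_1}. The only subtlety is handling the indices $n$ that fall between consecutive terms of the subsequence, which is resolved precisely by this double application of \eqref{eq:ineq_product}.
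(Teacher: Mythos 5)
Your proof is correct and follows essentially the same route as the paper: the implication from the summability condition to convergence is obtained by telescoping the submultiplicative inequality along the subsequence (you bound the tail product by $2^{k-m}$ once the terms drop below $1/2$, where the paper uses $\log x \le x-1$ to get $\exp(\sum a_{n_j,n_{j+1}} - K + k)$ — equivalent estimates), and the passage from the subsequence to all indices $n$ is the same double application of the inequality that the paper phrases via monotonicity of $a_{n,N}$ in $N$. No gaps.
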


\begin{proof}
We first prove that \Cref{item:series_1} is equivalent to \Cref{item:series_2}.
It is clear that \Cref{item:series_1} implies \Cref{item:series_2}.
Let now $n \ge 0$ and $(n_k)_{k \ge 0}$ be such that \Cref{item:series_2} holds.
From \eqref{eq:ineq_product}, we deduce that the sequence $(a_{n, N} : N > n)$ is nonincreasing in $N$.
In particular, since it is bounded, we have $\lim_{N \to +\infty} a_{n, N}
= \inf_{N > n} a_{n, N}$.
Then, for any integers $0 \le k < K$ such that $n < n_k < n_K$, \Cref{eq:ineq_product} yields
\[
0
\le \inf_{N > n} a_{n, N}
\le a_{n, n_K}
\le a_{n_k, n_K}.
\]
By letting $K \to +\infty$, we obtain \Cref{item:series_1}.

\smallskip

\Cref{item:series_3} can be obtained from \Cref{item:series_2} by considering a sequence $(n_k)_{k \ge 0}$ such that $a_{n_k, n_{k+1}} \le 2^{-k}$ for $k \ge 0$.
It is left to prove that \Cref{item:series_3} implies \Cref{item:series_2}.
Let $k \ge 0$.
By using \eqref{eq:ineq_product} and the inequality $\log(x) \le x - 1$ for $x \in [0,1]$, we obtain
\[
a_{n_k, n_K}
\le \prod_{k \le j < K} a_{n_j, n_{j+1}}
\le \exp \Biggl(\sum_{k \le j < K} a_{n_j, n_{j+1}} -K+k\Biggr), \enskip
0 \le k < K,
\]
from which we deduce \Cref{item:series_2} if we let $K \to +\infty$.
\end{proof}

We now prove the main result of this section.
Denote by $|v|$ the sum of the entries of a vector $v$.

\begin{theo}
\label{theo:CriteriaUniqErgo}
Let $\btau = (\tau_n: \cA_{n+1}^\ast \to \cA_n^\ast)_{n \ge 0}$ be a constant length and recognizable directive sequence.
The following are equivalent.
\begin{enumerate}
    \item \label{item:ue_1} There exists a contraction $\btau' = (\tau'_k = \tau'_{[n_k, n_{k+1})})_{k \ge 0}$ of $\btau$ such that the vectors $(v_k)_{k \ge 0}$ given by
    \[
    v_k(a)
    = \min_{b \in \cA_{n_{k+1}}} |\tau'_k(b)|_a, \enskip
    a \in \cA_{n_k}, \enskip
    k \ge 0,
    \]
    satisfy
    \begin{equation} \label{eq:series_ue}
    \sum_{k \ge 0} \frac{|v_k|}{|\tau'_k|}
    = +\infty.
    \end{equation}
    
    \item \label{item:ue_2} The system $(X_{\btau}, S)$ is uniquely ergodic.
\end{enumerate}
\end{theo}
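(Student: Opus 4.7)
The plan is to parametrize the invariant probability measures of $(X_{\btau},S)$ via a simplex inverse limit under the column-stochastic matrices $\bar M_{m,n} := M_{\tau_{[m,n)}}/|\tau_{[m,n)}|$ (well defined by constant length), and then quantify unique ergodicity through the Dobrushin contraction coefficient, which will be compared to the quantity $|v_k|/|\tau'_k|$. By \Cref{cor:sigmaalg} together with \eqref{eq:inv_measure} and \eqref{eq:measure_tower}, invariant probability measures $\mu$ on $X_{\btau}$ correspond bijectively to compatible sequences $(\tilde\mu_n)_{n\ge0}$ with $\tilde\mu_n := |\tau_{[0,n)}|\mu_n\in\Delta(\cA_n)$ and $\tilde\mu_m = \bar M_{m,n}\tilde\mu_n$ for $m<n$. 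Hence $(X_{\btau},S)$ is uniquely ergodic if and only if the nested compact convex sets $K_n = \bigcap_{N>n}\bar M_{n,N}(\Delta(\cA_N))$ reduce to singletons for every $n$, equivalently (by compactness and nestedness) the $\ell^1$-diameter of $\bar M_{n,N}(\Delta(\cA_N))$ tends to $0$ as $N\to\infty$ for each fixed $n$.

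Introduce the Dobrushin coefficient $\delta(\bar M) := 1 - \min_{b,b'}\sum_a\min(\bar M(a,b),\bar M(a,b'))$, which equals half the $\ell^1$-diameter of $\bar M(\Delta)$ and coincides with the operator norm of $\bar M$ restricted to the zero-sum hyperplane $V=\{z:\sum_a z_a=0\}$ in $(\R^{\cA},\|\cdot\|_1)$; in particular it is submultiplicative, $\delta(\bar M\bar N)\le\delta(\bar M)\delta(\bar N)$. Setting $\delta_{n,N}:=\delta(\bar M_{n,N})$, the first paragraph becomes: UE $\iff$ $\delta_{n,N}\to 0$ for every $n$. The bridge to the theorem is the two-sided sandwich, valid for any constant length morphism $\tau:\cA_{n+1}^\ast\to\cA_n^\ast$,
\[
\delta(\bar M_\tau)\;\le\;a(\tau):=1-\frac{|v_\tau|}{|\tau|}\;\le\;2|\cA_n|\cdot\delta(\bar M_\tau),
\]
where the left inequality uses $\min_b\bar M_\tau(a,b)\le\min(\bar M_\tau(a,b),\bar M_\tau(a,b'))$ and the right one follows from the row-wise bound $\bar M_\tau(a,b_0)-\min_b\bar M_\tau(a,b)\le 2\delta(\bar M_\tau)$ summed over $a\in\cA_n$.

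For $(1)\Rightarrow(2)$, given a contraction $(\tau'_k)$ with $\sum_k|v_k|/|\tau'_k|=+\infty$, set $a_k:=a(\tau'_k)$ and $\delta_k:=\delta(\bar M_{\tau'_k})$. The hypothesis is $\sum_k(1-a_k)=\infty$, hence $\prod_{j\ge k}a_j=0$ for every $k$; since $\delta_k\le a_k$, also $\prod_{j\ge k}\delta_j=0$ for every $k$. Submultiplicativity of $\delta$ yields $\delta_{n_k,n_K}\le\prod_{j=k}^{K-1}\delta_j\to 0$ as $K\to\infty$, and combining with the monotonicity of $\delta_{n,N}$ in $N$ together with $\delta_{n,N}\le\delta_{n,n_k}\delta_{n_k,N}$ (for $n\le n_k\le N$), we conclude $\delta_{n,N}\to 0$ for every $n$, hence unique ergodicity.

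For $(2)\Rightarrow(1)$, unique ergodicity gives $\delta_{n,N}\to 0$ for each $n$. Construct a contraction $(n_k)$ recursively with $n_0=0$: having chosen $n_k$, use $\delta_{n_k,N}\to 0$ to pick $n_{k+1}$ with $\delta_{n_k,n_{k+1}}\le 1/[\,2(k+1)^2|\cA_{n_k}|\,]$. The right-hand sandwich inequality then yields $a(\tau'_k)\le 1/(k+1)^2$, so $|v_k|/|\tau'_k|\ge 1-1/(k+1)^2$ and the series diverges. The main technical obstacle is that $a(\tau)$ is itself \emph{not} submultiplicative in general, blocking a direct application of \Cref{lem:series_lemma} (or of the infinite-product machinery) to the series $\sum_k|v_k|/|\tau'_k|$; the resolution is to run the contraction analysis at the level of the submultiplicative Dobrushin coefficient $\delta$ and ferry estimates back via the sandwich, at the price of absorbing alphabet-size factors $|\cA_n|$ into the explicit choice of the contraction in the reverse direction.
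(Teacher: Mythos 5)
Your argument is correct in substance but follows a genuinely different route from the paper. The paper works additively: it writes $M_{\tau'_k}=v_k 1_{k+1}+M'_k$ with $M'_k\ge0$, propagates this decomposition through compositions to obtain the exact product formula $1-|v_{k,K}|/|\tau'_{[k,K)}|=\prod_{k\le\ell<K}\bigl(1-|v_\ell|/|\tau'_\ell|\bigr)$, feeds this into \Cref{lem:series_lemma}, and then identifies $\lim_K v_{k,K}/|\tau'_{[0,K)}|$ with the vector $\mu_k$ directly from $\mu_k=M_{\tau'_{[k,K)}}\mu_K$; the converse is extracted from \Cref{prop:mu_generic_and_freqs}. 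You instead normalize to column-stochastic matrices, quantify non-uniqueness by the Dobrushin coefficient $\delta$, and ferry estimates between $\delta$ and $1-|v_\tau|/|\tau|$ through a two-sided comparison. What this buys is that submultiplicativity comes for free (it is the operator-norm characterization of $\delta$ on the zero-sum hyperplane), so you avoid the paper's inductive bookkeeping with $v_{k,K}$, $M'_{k,K}$ and the use of \Cref{lem:series_lemma}; the price is the alphabet factor in the reverse inequality, which you correctly absorb into the choice of the contraction since $|\cA_{n_k}|$ is fixed before $n_{k+1}$ is chosen. Your sandwich is valid; in fact $1-|v_\tau|/|\tau|\le|\cA_n|\,\delta(\bar M_\tau)$ already holds, because $|p_a-q_a|\le\tfrac12\|p-q\|_1$ for probability vectors $p,q$.

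The one step you assert rather than prove, and which carries real content, is the forward half of ``uniquely ergodic $\iff$ each $K_n$ is a singleton'' (equivalently, unique ergodicity implies $\delta_{n,N}\to0$). \Cref{cor:sigmaalg} together with \eqref{eq:inv_measure} and \eqref{eq:measure_tower} only gives injectivity of $\mu\mapsto(\tilde\mu_n)_{n\ge0}$, hence the implication you need for $(1)\Rightarrow(2)$; for $(2)\Rightarrow(1)$ you must in addition realize (limit points of) the columns $\bar M_{n,N}(\cdot,b)$ by invariant measures, e.g.\ as weak-$\ast$ limits of empirical measures along the towers $\TT_N(b)$, or equivalently invoke \Cref{item:mu_generic_4} of \Cref{prop:mu_generic_and_freqs}, which states precisely that under unique ergodicity all columns of $\bar M_{n,N}$ converge to $\tilde\mu_n$ and is how the paper closes this direction. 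With that reference supplied your proof is complete; note that, exactly as in the paper's own argument, this step implicitly uses that the towers $\TT_N(b)$ are nonempty for every letter $b$, which is where the primitivity hypothesis of \Cref{prop:mu_generic_and_freqs} enters.
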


\begin{proof}
Let $\btau' = (\tau'_k = \tau_{[n_k, n_{k+1})} : \cA_{n_{k+1}}^\ast \to \cA_{n_k}^\ast)_{k \ge 0}$ be a contraction of $\btau$ and $(v_k)_{k \ge 0}$ be the vectors such that \Cref{item:ue_1} in \Cref{theo:CriteriaUniqErgo} holds.
Observe that, with this definition of the vectors $(v_k)_{k \ge 0}$, we have
\begin{equation} \label{eq:v_k}
M_{\tau'_k}
= v_k 1_{k+1} + M'_k, \enskip k \ge 0,
\end{equation}
where the matrix $M'_k$ is nonnegative and $1_{k+1}$ is the row vector of ones in $\ZZ^{\cA_{n_{k+1}}}$.

\begin{claim}
Define the doubly indexed sequences of vectors $(v_{k, K} : 0 \le k < K)$ and of matrices $(M'_{k, K} : 0 \le k < K)$ inductively by $v_{k, k+1} = v_k$; $M'_{k, k+1} = M'_k$ for $k \ge 0$; and
\begin{align*}
v_{k,K+1}
&= |v_K| v_{k,K} + M'_{k,K} v_K + (|\tau'_K| - |v_K|) v_{k,K} \\
M'_{k,K+1}
&= M'_{k,K} M'_K, \enskip
K > k.
\end{align*}
Then we have
\begin{equation} \label{eq:v_k_K}
M_{\tau'_{[k,K)}}
= v_{k,K} 1_K + M'_{k,K}, \enskip
0 \le k < K.
\end{equation}
\end{claim}

\begin{proof}
From \eqref{eq:v_k} we have $1_k M_k' = (|\tau'_k| - |v_k|) 1_{k+1}$ for all $k \ge 0$.
We prove the claim by induction on $K$.
\Cref{eq:v_k_K} holds if $K = k+1$ by \eqref{eq:v_k}.
If \eqref{eq:v_k_K} holds, then
\begin{align*}
M_{\tau'_{[k,K+1)}}
&= (v_{k,K} 1_K + M'_{k,K})(v_K 1_{K+1} + M'_K)\\
&= (|v_K| v_{k,K} + M'_{k,K} v_K + (|\tau'_K| - |v_K|) v_{k,K}) 1_{K+1}
+ M'_{k,K} M'_K\\
&= v_{k,K+1} 1_{K+1} + M'_{k,K+1},
\end{align*}
thus proving the claim by induction.
\end{proof}

Observe that from the previous claim we have
\[
1_k M'_{k,K}
= (|\tau'_{[k,K)}| - |v_{k,K}|) 1_K
\]
and hence
\begin{align*}
|v_{k,K+1}|
&= |v_K| |v_{k,K}| + (1_k M'_{k,K}) v_K + (|\tau'_K| - |v_K|) |v_{k,K}|\\
&= |v_K| |v_{k,K}| + (|\tau'_{[k,K)}| - |v_{k,K}|) |v_K| + (|\tau'_K| - |v_K|) |v_{k,K}|\\
&= (|\tau'_K| - |v_K|) |v_{k,K}| + |\tau'_{[k,K)}| |v_K|, \enskip
0 \le k < K.
\end{align*}
After rearranging terms, we obtain
\[
1 - \frac{|v_{k,K+1}|}{|\tau'_{[k,K+1)}|}
= \Biggl(1 - \frac{|v_{k,K}|}{|\tau'_{[k,K)}|}\Biggr)
\Biggl(1 - \frac{|v_K|}{|\tau'_K|} \Biggr), \enskip
0 \le k < K,
\]
therefore
\[
1 - \frac{|v_{k, K}|}{|\tau'_{k,K}|}
= \prod_{k \le \ell < K}
\left(1 - \frac{|v_\ell|}{|\tau'_\ell|}\right), \enskip
0 \le k < K.
\]

Define
\[
a_{k, K}
= 1 - \frac{|v_{k, K}|}{|\tau'_{k,K}|}, \enskip
0 \le k < K.
\]
Since \Cref{eq:series_ue} holds, \Cref{lem:series_lemma} applied to $(a_{k,K} : 0 \le k < K)$ gives
\begin{equation} \label{eq:limit_v_k}
\lim_{K \to +\infty} \frac{|v_{k,K}|}{|\tau'_{[k,K)}|}
= 1, \enskip
k \ge 0.
\end{equation}

We now prove that \Cref{item:ue_2} in \Cref{theo:CriteriaUniqErgo} holds.
It is enough to prove that $(X_{\btau'}, S)$ is uniquely ergodic.
Let $\mu$ be an invariant measure of $(X_{\btau'}, S)$ and define the sequence of column vectors $(\mu_k)_{k \ge 0}$ by
\[
\mu_k
= \mu(\tau'_{[0,k)}([a])), \enskip
a \in \cA_{n_k}, \enskip
k \ge 0.
\]
From \Cref{cor:sigmaalg}, it is enough to prove that
\begin{equation}\label{eq:limit_mu}
\mu_k
= \lim_{K \to +\infty} \frac{v_{k,K}}{|\tau'_{[0,K)}|}, \enskip
k \ge 0.
\end{equation}

Let $0 \le k < K$.
By recognizability of $\btau'$, we have that $|\mu_k| = |\tau'_{[0,k)}|^{-1}$ and $\mu_k = M_{\tau'_{[k, K)}} \mu_K$.
Hence, from \eqref{eq:v_k_K} we deduce
\[
\mu_k
= \frac{v_{k,K}}{|\tau'_{[0,K)}|} + M'_{k,K} \mu_K.
\]
If we multiply both sides by $|\tau'_{[0,k)}|$ and sum over all letters $a \in \cA_{n_k}$, we obtain
\[
\frac{|v_{k,K}|}{|\tau'_{[k,K)}|} + |\tau'_{[0,k)}| |M'_{k,K} \mu_K|
= 1.
\]
From \eqref{eq:limit_v_k}, we have $|M'_{k,K} \mu_K| \to 0$ as $K \to +\infty$ and, since the matrix $M'_{k, K}$ is nonnegative, we finally deduce \eqref{eq:limit_mu}.

\smallskip

Now assume that \Cref{item:ue_2} in \Cref{theo:CriteriaUniqErgo} holds.
Define $n_0 = 0$ and denote by $\mu$ the unique invariant probability measure of $(X_{\btau}, S)$.
From \Cref{prop:mu_generic_and_freqs} we inductively construct an increasing sequence $(n_k)_{k \ge 0}$ such that
\begin{equation} \label{eq:min_ue}
\frac{\min_{b \in \cA_{n_{k+1}}} |\tau_{[n_k, n_{k+1})}(b)|_a}{|\tau_{[n_k, n_{k+1})}|}
\ge \frac{1}{2} |\tau_{[0, n_k)}| \mu_{n_k}(a), \enskip
a \in \cA_{n_k}, \enskip
k \ge 0
\end{equation}
Define the contraction $\btau' = (\tau'_k = \tau'_{[n_k, n_{k+1})})_{k \ge 0}$ of $\btau$.
By summing over all letters $a \in \cA_{n_k}$ in \eqref{eq:min_ue}, we deduce that \eqref{eq:series_ue} holds for the contraction $\btau'$.
This shows that \Cref{item:ue_2} implies \Cref{item:ue_1} in \Cref{theo:CriteriaUniqErgo} and finishes the proof. 
\end{proof}

The following corollary shows that the conditions stated in \Cref{theo:charac_MEF_is_regular} implies the unique ergodicity.

\begin{cor}
Let $\btau = (\tau_n: \cA_{n+1}^\ast \to \cA_n^\ast)_{n \ge 0}$ be a constant length and recognizable directive sequence.
If there exists a contraction $\btau' = (\tau'_k)_{k \ge 0}$ of $\btau$ such that
\begin{equation} \label{eq:series_coinc}
\sum_{k \ge 0} \frac{|\coinc(\tau'_k)|}{|\tau'_k|}
= +\infty,
\end{equation}
then the system $(X_{\btau}, S)$ is uniquely ergodic.
\end{cor}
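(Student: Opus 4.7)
The plan is to reduce the coincidence hypothesis to condition \Cref{item:ue_1} of \Cref{theo:CriteriaUniqErgo} applied to the very same contraction $\btau' = (\tau'_k = \tau_{[n_k,n_{k+1})})_{k \ge 0}$. In other words, I want to show that the coincidence density at each level is a lower bound for the quantity $|v_k|/|\tau'_k|$ that appears in \Cref{theo:CriteriaUniqErgo}, and then invoke that theorem directly.

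The key observation, which I would establish first, is a purely combinatorial inequality linking coincidences to minimum column-sums. For each $k \ge 0$ and each $i \in \coinc(\tau'_k)$, the definition of coincidence provides a letter $c_{k,i} \in \cA_{n_k}$ such that the $i$-th letter of $\tau'_k(b)$ equals $c_{k,i}$ for \emph{every} $b \in \cA_{n_{k+1}}$. Hence for every letter $a \in \cA_{n_k}$ and every $b \in \cA_{n_{k+1}}$ we have
\[
|\tau'_k(b)|_a \ge |\{i \in \coinc(\tau'_k) : c_{k,i} = a\}|.
\]
Taking the minimum over $b \in \cA_{n_{k+1}}$ and then summing over $a \in \cA_{n_k}$, the right-hand side telescopes into the total number of coincidences, giving $|v_k| \ge |\coinc(\tau'_k)|$, where $v_k$ is the vector defined in \Cref{theo:CriteriaUniqErgo}.

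With this inequality in hand, the hypothesis yields
\[
\sum_{k \ge 0} \frac{|v_k|}{|\tau'_k|}
\ge \sum_{k \ge 0} \frac{|\coinc(\tau'_k)|}{|\tau'_k|}
= +\infty,
\]
so condition \Cref{item:ue_1} of \Cref{theo:CriteriaUniqErgo} holds for the contraction $\btau'$, and unique ergodicity of $(X_\btau, S)$ follows. I do not foresee a genuine obstacle here: the proof is a one-line deduction from the theorem, and the only point requiring a little care is that distinct coincidences may produce the same letter $c_{k,i}$, which is why the bound is obtained by summing over $a$ rather than by singling out one coincidence at a time.
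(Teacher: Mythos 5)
Your proposal is correct and follows exactly the route the paper intends: the corollary is stated without proof as an immediate consequence of \Cref{theo:CriteriaUniqErgo}, and the inequality $|v_k| \ge |\coinc(\tau'_k)|$ (obtained, as you note, by grouping coincidence positions according to the common letter they produce and comparing with the minimum column counts) is precisely the missing one-line bridge. No issues.
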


Let us make some observations about \Cref{theo:CriteriaUniqErgo}.
There are in the literature other such conditions implying unique ergodicity. 
Let $m_n$ and $M_n$ be the smallest and greatest entry of $M_{\tau_n}$, respectively. 
It comes from \cite{Sen81} that if
\[
\sum_{n \ge 0} \frac{m_n}{M_n}
= +\infty
\]
then $(X_{\btau}, S)$ is uniquely ergodic.
This implies the well-known fact that, when dealing with square matrices, not necessarily in the constant length case, if a matrix with positive coefficients occurs infinitely many times in the sequence $(M_{\tau_n} : n \ge 0)$, then $(X_{\btau}, S)$ is uniquely ergodic.
This criteria for unique ergodicity can be tracked down to \cite{Fur60}.

For the constant length case, we do not need all coefficients to be positive.

\begin{cor}
Let $\btau = (\tau_n: \cA_{n+1}^\ast \to \cA_n^\ast)_{n \ge 0}$ be a constant length and recognizable directive sequence.
Suppose that there exists a morphism $\tau : \cA^\ast \to \cB^\ast$ which satisfies:
\begin{enumerate}
    \item There exists a letter $b$ in $\cB$ which occurs in $\tau(a)$ for every $a \in \cA$; and
    \item We have $\tau_n = \tau$ for infinitely many values of $n$.
\end{enumerate}
Then $(X_{\btau}, S)$ is uniquely ergodic.
\end{cor}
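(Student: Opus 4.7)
The plan is to reduce the corollary to the criterion established in Theorem \ref{theo:CriteriaUniqErgo}: it suffices to exhibit a contraction $\btau' = (\tau'_k)_{k \geq 0}$ of $\btau$ whose associated minimum-occurrence vectors $v_k$ satisfy $\sum_{k \geq 0} |v_k|/|\tau'_k| = +\infty$.

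To construct such a contraction, I would enumerate in increasing order the (infinitely many by assumption (2)) indices $n$ for which $\tau_n = \tau$, call them $m_1 < m_2 < \cdots$, and then set $n_0 = 0$ together with $n_k = m_k$ for $k \geq 1$. This defines a legitimate contraction $\btau' = (\tau'_k = \tau_{[n_k, n_{k+1})})_{k \geq 0}$. For each $k \geq 1$, since $\tau_{n_k} = \tau$, the morphism $\tau'_k$ factorizes as $\tau'_k = \tau \circ \sigma_k$ with $\sigma_k = \tau_{[n_k+1, n_{k+1})}$ (where $\sigma_k$ is the identity if $n_{k+1} = n_k + 1$), and by constant length one has $|\tau'_k| = |\tau| \cdot |\sigma_k|$.

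The key estimate is then immediate from hypothesis (1). Let $b \in \cB$ be the letter occurring in $\tau(a)$ for every $a \in \cA$. For each $c \in \cA_{n_{k+1}}$, the word $\tau'_k(c) = \tau(\sigma_k(c))$ decomposes as a concatenation of $|\sigma_k|$ blocks of the form $\tau(\sigma_k(c)_j)$, each contributing at least one $b$, so $|\tau'_k(c)|_b \geq |\sigma_k|$. Taking the minimum over $c$ gives $v_k(b) \geq |\sigma_k|$, hence
\[
\frac{|v_k|}{|\tau'_k|} \geq \frac{v_k(b)}{|\tau'_k|} \geq \frac{|\sigma_k|}{|\tau| \cdot |\sigma_k|} = \frac{1}{|\tau|}, \qquad k \geq 1.
\]

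Summing over $k$ yields a divergent series, and Theorem \ref{theo:CriteriaUniqErgo} then gives unique ergodicity. I do not foresee a real obstacle here: the argument is a direct application of the criterion once the contraction is chosen. The only minor wrinkle is that the definition of contraction forces $n_0 = 0$, so the lower bound $1/|\tau|$ on $|v_k|/|\tau'_k|$ is only guaranteed starting from $k \geq 1$, but this is harmless since dropping one term does not affect the divergence of the series.
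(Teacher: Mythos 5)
Your proof is correct and follows exactly the route the paper intends: the corollary is stated as a direct consequence of \Cref{theo:CriteriaUniqErgo} (the paper omits the verification), and your contraction along the indices where $\tau_n=\tau$, together with the bound $v_k(b)\ge|\sigma_k|$ giving $|v_k|/|\tau'_k|\ge 1/|\tau|$, is precisely the intended application. The handling of the forced $n_0=0$ is a correct and appropriately minor remark.
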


Our result also implies unique ergodicity when matrices are upper or lower triangular with positive coefficients (in the corresponding triangular and diagonal parts of the matrices) and one of each type occurring infinitely many times.

\Cref{theo:CriteriaUniqErgo} covers more situations in the constant length case than classical criteria.
For example, one cannot deduce from the Seneta criteria the 
unique ergodicity when 
\[
M_{\tau_n}
= \begin{pmatrix}
1 & n^2 \\ 2n^2-1 & n^2
\end{pmatrix}, \enskip
n \ge 0
\]
whereas it is uniquely ergodic as $|v_n| = 1+n^2$.
Moreover, in our framework we do not require $(|\cA_n| : n \ge 0)$ to be bounded, which is a constraint in \cite{Sen81}.

Let us mention that when  
\[
M_{\tau_n}
= \begin{pmatrix}
a_n & 1 \\ 1 & a_n
\end{pmatrix}, \enskip
\text{where} \enskip
a_n > 0
\]
it is shown in \cite{FFT09} that $(X_{\btau}, S)$ is uniquely ergodic if and only if $\sum_n 1/a_n$ diverges.

We finally remark in this section that there exist other result about ergodic measures in the context of constant length directive sequences.
However, these results go in the direction of characterizing when the system possesses exactly $k$ ergodic measures, where $k$ is the rank of every composition matrix in the directive sequence \cite[Theorem 3.7]{ABKK17}.

%%%%%%%%%%%%%%%%%%%%%%%%%%%%%%%%%%%%%%%%%%%%%%%%%%%%%%%%%%%%
\section{Preliminary definitions and results}\label{section:prelim}
%%%%%%%%%%%%%%%%%%%%%%%%%%%%%%%%%%%%%%%%%%%%%%%%%%%%%%%%%%%%

In this section we include all main preliminary lemmas that allow us to deduce our main results in the next section.

Let $\btau = (\tau_n: \cA_{n+1}^\ast \to \cA_n^\ast)_{n \ge 0}$ be a constant length and recognizable directive sequence and let $\mu$ be an ergodic measure of $(X_{\btau}, S)$.
For each $n \ge 0$, let $\cA'_n \subseteq \cA_n$ be a nonempty alphabet.
For convenience, we set  
\[
\coinc'(\tau_{[n,N)}) 
=
\coinc_{\cA'_N}(\tau_{[n,N)}), \enskip
N > n
\] 
where $\coinc_{\cA'_N}(\tau_{[n,N)})$ is the set of coincidences of the morphism $\tau_{[n,N)}$ relatively to $\cA'_N$ introduced in \Cref{subsec:coincidences}.

From now on, we suppose that $\mu$ is $(\cA'_n)_{n \ge 0}$-adapted to $\mu$.
We recall the definition of the sequence of partitions $(\PP_n)_{n \ge 0}$ given in \Cref{subsec:reco} as well as the definition of the sets $B'_n$, $\TT'_n$, $\TT'_{\ge n}$ for $n \ge 0$ and $\TT'$ given in \Cref{subsec:invariant_measures}.

In the following we aim to control the set (an their measure) of $z \in \ZZ_{(|\tau_{[0,n)}|)_{n \ge 0}}$ having a unique $\pimeq$-preimage.
We set
\begin{align*}
\cI'
&= \{z \in \ZZ_{(|\tau_{[0,n)}|)_{n \ge 0}}: |\pimeq^{-1}(\{z\}) \cap \TT'|
\le 1\} \enskip \text{and}\\
\cI'_{m}
&= \{z \in \ZZ_{(|\tau_{[0,n)}|)_{n \ge 0}} : \text{$\pimeq^{-1}(\{z\}) \cap \TT'_{\ge m}$ is contained in some atom of $\PP_m$}\}
\end{align*}
for each $m \ge 0$.

We recall that $\nu$ is the Haar measure on $\ZZ_{(|\tau_{[0,n)}|)_{n \ge 0}}$.

\begin{lem}\label{lem:cI'_equals_bigcap_cI'n}
The set $\cI'$ is equal to $\bigcap_{n\geq0} \cI'_{n}$ up to a set of $\nu$-measure zero.
\end{lem}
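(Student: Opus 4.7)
The easy inclusion $\cI' \subseteq \bigcap_{n \geq 0} \cI'_n$ holds literally, not just up to a $\nu$-null set: if $z \in \cI'$ then $\pimeq^{-1}(\{z\}) \cap \TT'$ has at most one element, hence so does the smaller set $\pimeq^{-1}(\{z\}) \cap \TT'_{\geq n} \subseteq \pimeq^{-1}(\{z\}) \cap \TT'$, and a set of at most one point is trivially contained in a single atom of $\PP_n$. The real content is therefore the reverse inclusion modulo a $\nu$-null set, and my plan is to handle it by pulling the statement back to $X_{\btau}$ via $\mu$.

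The main tool is the pushforward identity $\pimeq_* \mu = \nu$, which holds because $\pimeq_* \mu$ is a shift-invariant probability measure on the uniquely ergodic odometer $(\ZZ_{(|\tau_{[0,n)}|)_{n \ge 0}}, +1)$ and must therefore coincide with the Haar measure. Consequently it suffices to establish that $\mu\bigl(\pimeq^{-1}((\bigcap_n \cI'_n) \setminus \cI')\bigr) = 0$. Let $A_n(x)$ denote the unique atom of $\PP_n$ containing $x$. By \Cref{theo:reco}, the set $G = \{x \in X_{\btau} : \bigcap_n A_n(x) = \{x\}\}$ has $\mu(G) = 1$, and combined with \eqref{eq:measure_TT'} we get $\mu(G \cap \TT') = 1$. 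It therefore suffices to prove the pointwise emptiness
\[
\pimeq^{-1}\bigl((\bigcap_n \cI'_n) \setminus \cI'\bigr) \cap G \cap \TT' = \emptyset.
\]

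To establish this, I would fix $x$ in that intersection and seek a contradiction. Since $\pimeq(x) \notin \cI'$ and $x \in \TT'$, there exists some $y \in \pimeq^{-1}(\{\pimeq(x)\}) \cap \TT'$ with $y \neq x$. Because $\TT' = \bigcup_m \TT'_{\geq m}$, both $x$ and $y$ lie in $\TT'_{\geq n}$ for every $n$ larger than some threshold $m$. The hypothesis $\pimeq(x) \in \cI'_n$ then forces $x$ and $y$ to lie in a common atom of $\PP_n$, i.e.\ $y \in A_n(x)$ for all $n \geq m$. Since $(\PP_n)_{n \geq 0}$ is a refining sequence, $A_m(x) \subseteq A_{n'}(x)$ for $n' \leq m$ as well, giving $y \in \bigcap_n A_n(x) = \{x\}$ by $x \in G$, contradicting $y \neq x$.

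The one delicate point I anticipate is verifying that $\cI'$ and the $\cI'_n$ are actually $\nu$-measurable, so that the pushforward identity applies to them. These sets are analytic — they are defined by projecting Borel subsets of $X_{\btau}$ through the continuous map $\pimeq$ — hence universally measurable, which is sufficient; I would spell this out only briefly since the substantive content of the lemma lies in the partition/coincidence argument above.
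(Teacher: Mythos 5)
Your proof is correct and follows essentially the same route as the paper's: both reduce the reverse inclusion to a $\mu$-statement via $\pimeq_*\mu=\nu$ and then use \Cref{theo:reco} (points of $\TT'$ in a fiber over $\bigcap_n\cI'_n$ share the same $(\PP_n)_{n\ge0}$-address, hence coincide $\mu$-a.e.). Your version merely spells out the details the paper leaves implicit — the explicit full-measure set $G\cap\TT'$, the pointwise contradiction, and the universal-measurability remark.
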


\begin{proof}
Observe that the inclusion $\cI' \subseteq \bigcap_{n\geq0} \cI'_{n}$ follows directly from the definitions.
Thus, it is enough to prove that
\begin{equation}\label{eq:lem:cI'_equals_bigcap_cI'n:1}
\nu(\cap_{n\geq0} \cI'_n)
\leq \nu(\cI').
\end{equation}
We observe that if $z$ belongs to $\cI'_n$ for every $n \ge 0$, then all points in $\pimeq^{-1}(\{z\}) \cap \TT'$ share the same $(\PP_n)_{n \ge 0}$-address.
Therefore, by \Cref{theo:reco} we deduce
\[
\mu(\pimeq^{-1}(\cap_{n\geq0} \cI'_n))
\leq \mu(\pimeq^{-1}(\cI')),
\]
which implies \eqref{eq:lem:cI'_equals_bigcap_cI'n:1}.
\end{proof}

The next lemma allows us to interpret coincidences in topological terms.
The proof is left to the reader since it is straightforward from the definitions.

\begin{lem}\label{lem:coinc_top}
Let $0 \le n < m$.
Then $k$ belongs to $\coinc'(\tau_{[n,m)})$ if and only if there exists a letter $a \in \cA_n$ such that $S^{k |\tau_{[0,n)}|} B'_m \subseteq B_n(a)$.
\end{lem}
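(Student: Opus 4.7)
The proof is essentially an unpacking of the definitions of coincidences and of the partitions $\PP_n$, so my plan is to make the translation between the symbolic and topological languages precise.

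First, I would expand both sides. Recall $B'_m = \bigcup_{b \in \cA'_m} \tau_{[0,m)}([b])$, and write $\tau_{[0,m)} = \tau_{[0,n)} \circ \tau_{[n,m)}$. For any point $y \in [b]$ with $b \in \cA'_m$, the image $\tau_{[0,m)}(y) = \tau_{[0,n)}(\tau_{[n,m)}(y))$ naturally decomposes into blocks of length $|\tau_{[0,n)}|$, the $k$-th of which (for $0 \le k < |\tau_{[n,m)}|$) equals $\tau_{[0,n)}(c_k)$, where $c_k$ is the $k$-th letter of $\tau_{[n,m)}(b)$.

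Second, I would read off the consequence: shifting $\tau_{[0,m)}([b])$ by $k |\tau_{[0,n)}|$ lands the origin precisely at the start of that $k$-th block, so
\[
S^{k|\tau_{[0,n)}|} \tau_{[0,m)}([b])
\subseteq \tau_{[0,n)}([c_k])
= B_n(c_k).
\]
Thus $S^{k|\tau_{[0,n)}|} B'_m$ is contained in $\bigcup_{b \in \cA'_m} B_n(c_k(b))$, where $c_k(b)$ denotes the $k$-th letter of $\tau_{[n,m)}(b)$.

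Third, I would conclude by comparing with the definition. The union $\bigcup_{b \in \cA'_m} B_n(c_k(b))$ is contained in a single $B_n(a)$ if and only if $c_k(b) = a$ for every $b \in \cA'_m$, i.e.\ the map $b \mapsto c_k(b)$ is constant on $\cA'_m$; this is exactly the condition $k \in \coinc_{\cA'_m}(\tau_{[n,m)}) = \coinc'(\tau_{[n,m)})$. Conversely, since the sets $\{B_n(a) : a \in \cA_n\}$ are pairwise disjoint atoms of $\PP_n$, the containment $S^{k|\tau_{[0,n)}|} B'_m \subseteq B_n(a)$ forces every $c_k(b)$ to equal $a$. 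This proves both implications.

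There is no real obstacle here; the only subtlety is to be careful that the statement concerns $\cA'_m$-coincidences rather than full coincidences, so one must restrict the argument to $b \in \cA'_m$. Everything else follows from the fact that $\tau_{[0,n)}$-blocks tile $\tau_{[0,m)}(y)$ consistently across all $y$, which is automatic from the constant-length hypothesis on $\btau$.
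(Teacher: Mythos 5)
Your proof is correct and is exactly the straightforward unpacking of definitions that the paper has in mind (it leaves this proof to the reader). The only implicit ingredients worth acknowledging are that recognizability makes the atoms $B_n(a)$ pairwise disjoint and that primitivity guarantees each $B_m(b)$, $b\in\cA'_m$, is nonempty, so that the containment in the converse direction genuinely forces $c_k(b)=a$.
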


The next lemma is a first step to control the measure of the set of points having a unique $\pimeq$-preimage.  

\begin{lem}\label{lem:measure_cI'n_as_limit}
We have that
\[
\nu(\cI'_{n})
\ge \sup_{m > n} \frac{|\coinc'(\tau_{[n,m)})|}{|\tau_{[n,m)}|}, \enskip
n \ge 0.
\]
\end{lem}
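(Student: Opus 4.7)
The plan is, for each fixed $m > n$, to exhibit a cylinder-type subset of the odometer of $\nu$-measure equal to $|\coinc'(\tau_{[n,m)})|/|\tau_{[n,m)}|$ that is contained in $\cI'_n$, and then take the supremum over $m > n$. Concretely, I will work with
\[
E_{n,m}
= \{z \in \ZZ_{(|\tau_{[0,n)}|)_{n \ge 0}} : z_m \in z_n + |\tau_{[0,n)}| \cdot \coinc'(\tau_{[n,m)}) \pmod{|\tau_{[0,m)}|}\}.
\]
Applying \Cref{lem:measure_cylinders} with $A_{n,m} = \coinc'(\tau_{[n,m)})$ immediately yields $\nu(E_{n,m}) = |\coinc'(\tau_{[n,m)})|/|\tau_{[n,m)}|$, so the whole proof reduces to establishing the inclusion $E_{n,m} \subseteq \cI'_n$.

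To prove this inclusion, I would fix $z \in E_{n,m}$ and write $z_m = z_n + |\tau_{[0,n)}| k$ for some $k \in \coinc'(\tau_{[n,m)})$. Take any $x \in \pimeq^{-1}(\{z\}) \cap \TT'_{\ge n}$. Since $n \le m$, the point $x$ lies in $\TT'_m$, so $x \in \TT_m(a')$ for some $a' \in \cA'_m$. On the other hand, the formula \eqref{eq:cylinder_odometer} gives $\pimeq^{-1}(\{z\}) \subseteq \pimeq^{-1}([z_0, \ldots, z_m]) = S^{z_m} B_m$, and intersecting this with the tower $\TT_m(a')$ forces $x \in S^{z_m} B_m(a') \subseteq S^{z_m} B'_m$. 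Now \Cref{lem:coinc_top} applied to the coincidence $k$ produces a letter $a \in \cA_n$ with $S^{k|\tau_{[0,n)}|} B'_m \subseteq B_n(a)$. Using $z_m = z_n + k|\tau_{[0,n)}|$, I conclude
\[
x
\in S^{z_m} B'_m
= S^{z_n}\bigl(S^{k|\tau_{[0,n)}|} B'_m\bigr)
\subseteq S^{z_n} B_n(a),
\]
which is a single atom of $\PP_n$. Since $x$ was arbitrary, $\pimeq^{-1}(\{z\}) \cap \TT'_{\ge n}$ is contained in this atom, whence $z \in \cI'_n$.

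Combining the two steps gives $\nu(\cI'_n) \ge \nu(E_{n,m}) = |\coinc'(\tau_{[n,m)})|/|\tau_{[n,m)}|$ for every $m > n$, and taking the supremum over $m$ finishes the proof. I do not expect any genuine obstacle here: the argument is essentially a translation of the combinatorial definition of coincidence (via \Cref{lem:coinc_top}) into the cylinder language of the odometer factor map (via \eqref{eq:cylinder_odometer}), so the only care required is bookkeeping — correctly identifying the $\PP_n$-atom that captures $\pimeq^{-1}(\{z\}) \cap \TT'_{\ge n}$, and verifying that the use of $\TT'_{\ge n}$ (rather than $\TT'_{\ge m}$) is justified by the inclusion $\TT'_{\ge n} \subseteq \TT'_m$.
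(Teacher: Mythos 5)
Your proof is correct and follows essentially the same route as the paper: you define the same set (the paper calls it $D_{n,m}$), show it lies in $\cI'_n$ via \Cref{lem:coinc_top}, and compute its measure with \Cref{lem:measure_cylinders}. The only difference is that you spell out the bookkeeping (why $x \in S^{z_m}B'_m$, using $\TT'_{\ge n} \subseteq \TT'_m$ and the partition property of $\PP_m$) that the paper leaves implicit.
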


\begin{proof}
Let $n \ge 0$ and define the sets
\[
D_{n, m}
= \{z \in \ZZ_{(|\tau_{[0,n)}|)_{n \ge 0}} : z_m \in z_n + |\tau_{[0,n)}| \coinc'(\tau_{[n,m)})\}, \enskip
m > n.
\]
We first prove that $D_{n,m}$ is contained in $\cI'_n$ for all $m > n$.
Indeed, if $z \in D_{n,m}$ from \Cref{lem:coinc_top} there exists a letter $a \in \cA_n$ such that
\[
S^{z_m} B'_m
\subseteq S^{z_n} B_n(a)
\]
Thus if $x$ is in $\pimeq^{-1}(\{z\}) \cap \TT'_{\ge n}$ then $x$ is in the atom $S^{z_n} B_n(a)$ and thus $z$ is in $\cI'_n$.

From \Cref{lem:measure_cylinders}, we obtain
\[
\nu(\cI'_n)
\ge \sup_{m>n} \nu(D_{n,m})
= \sup_{m> n} \frac{|\coinc' (\tau_{[n,m)})|}{|\tau_{[n,m)}|}.
\]
\end{proof}

As it is more easy to control the positions that are not coincidences, we give the following definition.
For $0 \le n < m$ and letters $a$ and $b$ in $\cA_n$ we define
\[
C_{n,m}(a,b)
= \{k \in [0,|\tau_{[n,m)}|) : S^{k |\tau_{[0,n)}|} B'_m \cap B_n(c) \not= \emptyset, \enskip \forall c \in \{a,b\}\}.
\]
Observe that, since the partition $\PP_m$ is finer than $\PP_n$, for all $k$ in $C_{n,m}(a,b)$ and $c$ in $\{a,b\}$ there exists a letter $c'$ in $\cA'_m$ such that
\begin{equation}
\label{eq:inclusion_C_n_m}
S^{k |\tau_{[0,n)}|} B_m(c')
\subseteq B_n(c).
\end{equation}

The following lemma will allow us define two disjoint sets in $X_{\btau}$ with the same projection under $\pimeq$ and whose $\nu$-measure is controlled in terms of coincidences.

\begin{lem} \label{lem:sets_C_n_Nell}
Let $n \ge 0$.
Then there exist distinct letters $a$ and $b$ in $\cA_n$ and an increasing sequence $(N_\ell)_{\ell \ge 0}$ such that
\begin{equation}\label{eq:estimate_non_coinc}
\frac{|C_{n, N_\ell}(a,b)|}{|\tau_{[n,N_\ell)}|}
\ge \frac{1}{2|\cA_n|^2}
\Bigg(1- \limsup_{\ell \to +\infty}
\frac{|\coinc'(\tau_{[n,N_\ell)})|}{|\tau_{[n,N_\ell)}|} \Bigg), \enskip
\ell \ge 0.
\end{equation}
If in addition the directive sequence $\btau$ is $(\cA'_n)_{n \ge 0}$-adapted to $\mu$, then $(N_\ell)_{\ell \ge 0}$ can be defined so that it satisfies
\begin{equation} \label{eq:estimate_freqs} \frac{|\tau_{[N_\ell, N_{\ell+1})}(c)|_d}{|\tau_{[N_\ell, N_{\ell+1})}|}
\le \frac{1}{2^\ell |\cA_{N_\ell}|} + \mu(\TT_{N_\ell}(d)), \enskip
c \in \cA'_{N_{\ell + 1}}, \enskip
d \in \cA_{N_\ell}, \enskip
\ell \ge 0.
\end{equation}
\end{lem}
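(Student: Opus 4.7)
The plan is to derive \eqref{eq:estimate_non_coinc} from a pigeonhole argument on positions that are not coincidences, and then to enforce \eqref{eq:estimate_freqs} by an inductive choice exploiting \Cref{prop:mu_generic_and_freqs}.

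First I would translate non-coincidences topologically. If $k \in [0,|\tau_{[n,m)}|) \setminus \coinc'(\tau_{[n,m)})$, then by definition some pair $c_1, c_2 \in \cA'_m$ has distinct $k$-th images under $\tau_{[n,m)}$. Because $\PP_m$ refines $\PP_n$, each atom $S^{k|\tau_{[0,n)}|} B_m(c_i)$ of $\PP_m$ is contained in a single atom of $\PP_n$ of the form $B_n(\cdot)$; the resulting two atoms are distinct $B_n(a), B_n(b)$, so $k \in C_{n,m}(a,b)$. Therefore
\[
[0,|\tau_{[n,m)}|) \setminus \coinc'(\tau_{[n,m)})
\subseteq \bigcup_{\substack{\{a,b\} \subseteq \cA_n \\ a \ne b}} C_{n,m}(a,b),
\]
and since there are at most $\binom{|\cA_n|}{2} \le |\cA_n|^2/2$ such unordered pairs, pigeonhole yields distinct $a_m, b_m \in \cA_n$ with
\[
\frac{|C_{n,m}(a_m,b_m)|}{|\tau_{[n,m)}|}
\ge \frac{2}{|\cA_n|^2}\Bigl(1 - \frac{|\coinc'(\tau_{[n,m)})|}{|\tau_{[n,m)}|}\Bigr).
\]

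Next, set $L^{\ast} = \limsup_{m \to \infty} |\coinc'(\tau_{[n,m)})|/|\tau_{[n,m)}|$ (the case $L^{\ast}=1$ is trivial since both sides of \eqref{eq:estimate_non_coinc} vanish). Choose $(m_k)$ along which the ratio tends to $L^{\ast}$, then thin it to $(M_\ell)$ on which $(a_{M_\ell}, b_{M_\ell}) \equiv (a,b)$ is constant. After discarding finitely many initial indices so that the ratio stays below $L^{\ast} + (1-L^{\ast})/4$ throughout, the pigeonhole bound becomes
\[
\frac{|C_{n,M_\ell}(a,b)|}{|\tau_{[n,M_\ell)}|}
\ge \frac{2}{|\cA_n|^2}\cdot\frac{3(1-L^{\ast})}{4}
\ge \frac{1-L^{\ast}}{2|\cA_n|^2}.
\]
Because every infinite subsequence of $(M_\ell)$ still converges to $L^{\ast}$ in the coincidence ratio, \eqref{eq:estimate_non_coinc} holds with $(N_\ell)=(M_\ell)$ or with any further subsequence.

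For the adapted case I would build $(N_\ell)$ inductively inside $(M_\ell)$. Combining \Cref{prop:mu_generic_and_freqs}\eqref{item:mu_generic_3} with the constant-length identity $|\tau_{[0,m)}(c)| = |\tau_{[0,m)}|$ and \eqref{eq:measure_tower} gives
\[
\lim_{m \to \infty} \frac{|\tau_{[n,m)}(c_m)|_d}{|\tau_{[n,m)}|} = \mu(\TT_n(d))
\quad \text{for every sequence } c_m \in \cA'_m,
\]
which a standard extraction argument promotes to uniform convergence in $c \in \cA'_m$. Assuming $N_0 < \dots < N_\ell$ have been chosen, pick $N_{\ell+1} \in (M_\ell)$ with $N_{\ell+1} > N_\ell$ large enough that
\[
\frac{|\tau_{[N_\ell,N_{\ell+1})}(c)|_d}{|\tau_{[N_\ell,N_{\ell+1})}|}
\le \mu(\TT_{N_\ell}(d)) + \frac{1}{2^\ell|\cA_{N_\ell}|}
\]
holds simultaneously for every $c \in \cA'_{N_{\ell+1}}$ and $d \in \cA_{N_\ell}$; this is possible since $|\cA_{N_\ell}|$ is finite and the convergence in $c$ is uniform. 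Since $(N_\ell) \subseteq (M_\ell)$, the previous paragraph still gives \eqref{eq:estimate_non_coinc}.

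The main obstacle is coupling the two extractions: keeping the pair $(a,b)$ fixed while simultaneously arranging the frequency control needed for \eqref{eq:estimate_freqs}. Executing them in order—first freezing $(a,b)$ along a sequence realising $L^{\ast}$, then thinning within $(M_\ell)$ to enforce the frequency estimate—resolves this conflict.
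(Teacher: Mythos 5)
Your proposal is correct and follows essentially the same route as the paper: witnessing each non-coincidence position as an element of some $C_{n,m}(a,b)$, applying the pigeonhole principle to freeze a single pair $(a,b)$ along a subsequence, and then refining that subsequence inductively via \Cref{prop:mu_generic_and_freqs} to secure \eqref{eq:estimate_freqs} in the adapted case. The only differences are cosmetic (unordered versus ordered pairs in the pigeonhole count, and first passing to a subsequence realizing the $\limsup$ before fixing the pair), and your observation that further thinning preserves \eqref{eq:estimate_non_coinc} is exactly the point the paper relies on implicitly.
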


\begin{proof}
Let $n \ge 0$ and $n_0 \ge n$ such that
\begin{equation} \label{eq:first_estimate_non_coinc}
\frac{|[0, |\tau_{[n,N)}|) \setminus \coinc'(\tau_{[n,N)})|}{|\tau_{[n, N)}|} \geq
\frac{1}{2}
\Bigg(1- \limsup_{N \to +\infty}
\frac{|\coinc'(\tau_{[n,N)})|}{|\tau_{[n,N)}|} \Bigg), \enskip
N > n_0.
\end{equation}
By definition, for every $N > n_0$ and $k \in [0, |\tau_{[n,N)}|) \setminus \coinc'(\tau_{[n,N)})$ there exist distinct letters $a_{N, k}$ and $b_{N, k}$ in $\cA_n$ and letters $a'_{N, k}$ and $b'_{N, k}$ in $\cA'_N$ such that the $k$-th letter of $\tau_{[n,N)}(a'_{N,k})$ (resp. of $\tau_{[n,N)}(b'_{N,k})$) is $a_{N,k}$ (resp. $b_{N,k}$).
This translates into
\[
S^{k |\tau_{[0,n)}|} B'_N \cap B_n(c) \not= \emptyset, \enskip \forall c \in \{a_{N,k}, b_{N,k}\},
\]
or $k \in C_{n,N}(a_{N,k}, b_{N,k})$.
Now, for each $N > n_0$ we use the Pigeonhole principle to obtain distinct letters $a_N$ and $b_N$ in $\cA_n$ and a subset $C'_{n, N}$ of $C_{n, N}(a_N, b_N)$ with
\[
a_{N,k} = a_N, \enskip
b_{N,k} = b_N, \enskip
k \in C'_{n, N},
\]
\begin{equation} \label{eq:second_estimate_non_coinc}
|C'_{n, N}|
\ge |[0, |\tau_{[n,N)}|) \setminus \coinc'(\tau_{[n,N)})|
/ |\cA_n|^2.
\end{equation}
Moreover, we can find an increasing sequence $(N_\ell)_{\ell\geq0}$ with $N_0 > n_0$ and distinct letters $a,b \in \cA_n$ such that $a = a_{N_\ell}$ and $b = b_{N_\ell}$ for all $\ell\geq0$.
Therefore the sets $C_{n,N_\ell}(a, b)$ satisfy \eqref{eq:estimate_non_coinc} by \eqref{eq:first_estimate_non_coinc} and \eqref{eq:second_estimate_non_coinc}.

\smallskip

If $\btau$ is $(\cA'_n)_{n \ge 0}$-adapted to $\mu$, from \Cref{prop:mu_generic_and_freqs} we can construct $(N_\ell)_{\ell \ge 0}$ inductively so that it additionally satisfies \eqref{eq:estimate_freqs}.
\end{proof}

The next lemma provides two disjoint sets that, roughly speaking, separate the fibers and whose measures can be controlled by the density of coincidences.
It is the main lemma of this section.
Its proof is subdivided in some technical claims.

\begin{lem} \label{lem:limsup_lemma}
Let $n \ge 0$.
There exist sets $E_n$ and $E'_n$ in $X_{\btau}$ such that
\begin{enumerate}
    \item $E_n$ and $E'_n$ are disjoint;
    \item $\pimeq(E_n) = \pimeq(E'_n)$; and
    \[
    \nu(\pimeq(E_n))
    = \nu(\pimeq(E'_n))
    \ge \frac{1}{2|\cA_n|^2}
    \Bigg(1- \limsup_{N \to +\infty}
    \frac{|\coinc'(\tau_{[n,N)})|}{|\tau_{[n,N)}|} \Bigg).
    \]
\end{enumerate}
If in addition the directive sequence $\btau$ is $(\cA'_n)_{n \ge 0}$-adapted to $\mu$ for the constant $\delta > 0$, then
\begin{enumerate}
    \item 
    $\mu(E_n)$ and $\mu(E'_n)$ are greater than
    \[
    \frac{\delta}{2|\cA_n|^2}
    \Bigg(1- \limsup_{N \to +\infty}
    \frac{|\coinc'(\tau_{[n,N)})|}{|\tau_{[n,N)}|} \Bigg); \enskip
    \text{and}
    \]
    \item
    %we have
    $
    \pimeq(E_n \cap U)
    = \pimeq(E'_n \cap U)
    \pmod \nu
    $
    for any Borel set $U$ in $X_{\btau}$ such that $\mu(U) = 1$.
\end{enumerate}
\end{lem}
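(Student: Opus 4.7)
The plan is to invoke \Cref{lem:sets_C_n_Nell} to obtain distinct letters $a, b \in \cA_n$ and an increasing sequence $(N_\ell)_{\ell \ge 0}$ satisfying \eqref{eq:estimate_non_coinc} (and \eqref{eq:estimate_freqs} in the adapted case). The natural candidates are
\[
E_n = \TT_n(a) \cap \pimeq^{-1}(F), \qquad E'_n = \TT_n(b) \cap \pimeq^{-1}(F),
\]
with $F = \pimeq(\TT_n(a)) \cap \pimeq(\TT_n(b))$ the set of odometer points whose $\pimeq$-fibers reach both towers. Disjointness is immediate from $\TT_n(a) \cap \TT_n(b) = \emptyset$; for $z \in F$, any representative $x \in \TT_n(a) \cap \pimeq^{-1}(z)$ automatically lies in $E_n$, whence $\pimeq(E_n) = F$, and symmetrically $\pimeq(E'_n) = F$.

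The central estimate is the lower bound on $\nu(F)$. By shift invariance of the Haar measure, $\nu(F) = |\tau_{[0,n)}|\,\nu(\pimeq(B_n(a)) \cap \pimeq(B_n(b)))$. A compactness argument using the nested closed sets $B_n(c) \cap S^{z_m} B_m$ shows that, for $z$ with $z_n = 0$, the fiber $\pimeq^{-1}(z)$ meets $B_n(c)$ if and only if, for every $m > n$, there exists $d \in \cA_m$ with $\tau_{[n,m)}(d)_{z_m/|\tau_{[0,n)}|} = c$. Writing $X_m^c$ for the set of such positions, monotone continuity together with \Cref{lem:measure_cylinders} yields
\[
\nu(\pimeq(B_n(a)) \cap \pimeq(B_n(b))) = \lim_{m \to +\infty} \frac{|X_m^a \cap X_m^b|}{|\tau_{[0,m)}|}.
\]
Since $C_{n, N_\ell}(a, b) \subseteq X_{N_\ell}^a \cap X_{N_\ell}^b$ for every $\ell$, the uniform subsequence bound in \eqref{eq:estimate_non_coinc} passes to the limit and produces the claimed inequality on $\nu(F)$.

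For the adapted case, the witnesses $a'_k, b'_k \in \cA'_{N_\ell}$ in the definition of $C_{n, N_\ell}(a, b)$ together with the adaptedness bound $\mu(B_{N_\ell}(a'_k)) \ge \delta/|\tau_{[0,N_\ell)}|$ give, after summing over $k \in C_{n, N_\ell}(a, b)$ and extending over the tower levels $j \in [0, |\tau_{[0,n)}|)$, the inequality
\[
\mu(\TT_n(a)) \ge \frac{\delta\,|C_{n, N_\ell}(a, b)|}{|\tau_{[n, N_\ell)}|} \ge \frac{\delta}{2|\cA_n|^2}\bigl(1 - \limsup\bigr),
\]
with the analogous estimate for $b$. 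To transfer this to $\mu(E_n)$ and obtain robustness under $\mu$-null removals, I would refine $F$ to $F^\ast = F \cap \{z : \mu_z(\TT_n(a)) > 0 \text{ and } \mu_z(\TT_n(b)) > 0\}$ via the disintegration $\mu = \int \mu_z\,d\nu$: for $z \in F^\ast$ and any Borel $U$ with $\mu(U) = 1$, the equality $\mu_z(E_n \cap U) = \mu_z(\TT_n(a)) > 0$ yields $\pimeq(E_n \cap U) = \pimeq(E'_n \cap U) = F^\ast$ modulo $\nu$. The main obstacle is showing that this refinement is not costly, namely that $\nu(F \setminus F^\ast) = 0$ and $\mu(\TT_n(a) \cap \pimeq^{-1}(F^\ast)) \ge \frac{\delta}{2|\cA_n|^2}(1 - \limsup)$; this should follow from a fiberwise application of \Cref{prop:mu_generic_and_freqs} combined with the quantitative frequency estimate \eqref{eq:estimate_freqs}, which together control how the conditional measures $\mu_z$ distribute across the towers along the sequence $(N_\ell)$.
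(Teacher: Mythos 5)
Your construction for the first half of the lemma is a legitimate alternative to the paper's: taking $F=\pimeq(\TT_n(a))\cap\pimeq(\TT_n(b))$, using that the sets of admissible positions are decreasing in $m$ and that $C_{n,N_\ell}(a,b)\subseteq X_{N_\ell}^a\cap X_{N_\ell}^b$, does give $\nu(F)\ge\frac{1}{2|\cA_n|^2}(1-\limsup)$ (note the denominator in your limit formula should be $|\tau_{[n,m)}|$, not $|\tau_{[0,m)}|$, with the factor $|\tau_{[0,n)}|$ restoring the count over the tower levels). The paper instead sets $E_n=\limsup_{\ell}E_{n,N_\ell}(a)$ where $E_{n,N_\ell}(a)$ is an explicit union of level-$N_\ell$ cylinders $S^{k|\tau_{[0,n)}|+j}B_{N_\ell}(c')$ with $c'\in\cA'_{N_\ell}$ and $k\in C_{n,N_\ell}$, and identifies $\pimeq(E_n)$ with $F_{n,\infty}=\limsup_\ell F_{n,N_\ell}$ via a compactness argument; this choice is what makes the adapted case work.

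In the adapted case your argument has a genuine gap. The inequality you derive is a lower bound on $\mu(\TT_n(a))$, not on $\mu(E_n)=\mu(\TT_n(a)\cap\pimeq^{-1}(F))$: the witness cylinders $S^{k|\tau_{[0,n)}|+j}B_{N_\ell}(a'_k)$ with $k\in C_{n,N_\ell}(a,b)$ need not be contained in $\pimeq^{-1}(F)$, because $z\in F$ requires the fiber to meet \emph{both} towers, a condition involving all levels $m$ simultaneously, whereas $k\in C_{n,N_\ell}(a,b)$ only gives information at the single level $N_\ell$. The trivial bound $\mu(\TT_n(a))+\nu(F)-1$ can be negative, so the claimed estimate on $\mu(E_n)$ does not follow. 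Your proposed repair via disintegration rests on $\nu(F\setminus F^\ast)=0$, which is neither proved nor clearly true (topological intersection of a fiber with a tower does not force the conditional measure to charge that tower); and even granting it, the final conclusion $\pimeq(E_n\cap U)=\pimeq(E'_n\cap U)\pmod\nu$ for your sets would only give both images sandwiched between $F_{n,\infty}$ and $F$ modulo $\nu$, which is not equality unless $\nu(F\setminus F_{n,\infty})=0$. The paper avoids all of this: adaptedness gives $\mu(E_{n,N_\ell}(c))\ge\delta|C_{n,N_\ell}|/|\tau_{[n,N_\ell)}|$ directly because each constituent base has measure at least $\delta/|\tau_{[0,N_\ell)}|$, hence $\mu(\limsup_\ell E_{n,N_\ell}(c))\ge\limsup_\ell\mu(E_{n,N_\ell}(c))$; and the $\pmod\nu$ statement is obtained by inner regularity (compact $K\subseteq U$ with $\mu(X_{\btau}\setminus K)<\epsilon$ costs at most $\epsilon/\delta$ in $\nu$-measure of the image) combined with the separate, nontrivial claim $\nu(F_{n,\infty})=\limsup_\ell\nu(F_{n,N_\ell})$, which is exactly where the frequency estimate \eqref{eq:estimate_freqs} is needed. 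Your proposal does not engage with either of these two ingredients.
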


\begin{proof}
From \Cref{lem:sets_C_n_Nell}, there exist distinct letters $a$ and $b$ in $\cA_n$ and an increasing sequence $(N_\ell)_{\ell \ge 0}$ such that \eqref{eq:estimate_non_coinc} holds.
For $\ell \ge 0$ define $C_{n, N_\ell} = C_{n, N_\ell}(a,b)$ and observe that if $k$ is in $C_{n, N_\ell}$ and $c$ is in $\{a,b\}$, then \eqref{eq:inclusion_C_n_m} implies that there exists a nonempty set $\cA_{n, N_\ell, k}(c) \subseteq \cA'_{N_\ell}$ such that
\[
S^{k |\tau_{[0,n)}|} B_{N_\ell}(c')
\subseteq B_n(c), \enskip
c' \in \cA_{n, N_\ell, k}(c).
\]

Let $K \subseteq X_{\btau}$ be a compact set.
For all $\ell \ge 0$ we set
\begin{align*}
F_{n, N_\ell}
&= \{z \in \ZZ_{(|\tau_{[0,n)}|)_{n \ge 0}}: z_{N_\ell} \in z_n + |\tau_{[0,n)}| C_{n, N_\ell}\};
& \ 
F_{n, \infty}
&= \limsup_{\ell \to +\infty} F_{n, N_\ell}\\
F_{n, N_\ell, K}(c)
&= \{z \in F_{n, N_\ell} : S^{z_{N_\ell}} B'_{N_\ell} \cap S^{z_n} B_n(c) \cap K \not= \emptyset\};
& \ 
c &\in \{a,b\}\\
F_{n, \infty, K}
&= \limsup_{\ell \to +\infty} F_{n, N_\ell, K}(a);\\
F'_{n, \infty, K}
&= \limsup_{\ell \to +\infty} F_{n, N_\ell, K}(b)\\
E_{n, N_\ell}(c)
&= \bigcup_{k \in C_{n,N_\ell}} \bigcup_{\substack{c' \in \cA_{n, N_\ell, k}(c) \\ 0 \le j < |\tau_{[0,n)}|}} S^{k|\tau_{[0,n)}| + j} B_{N_\ell}(c'); & \enskip
c &\in \{a,b\}\\
E_n
&= \limsup_{\ell \to +\infty} E_{n, N_\ell}(a);\\
E'_n
&= \limsup_{\ell \to +\infty} E_{n, N_\ell}(b).
\end{align*}

Observe that, from the definition of the sets $C_{n, N_\ell}$, we have
\[
F_{n, \infty, X_{\btau}}
= F'_{n, \infty, X_{\btau}}
= F_{n, \infty}
\]
and that
\[
E_n \subseteq \TT_n(a) \enskip
\text{and} \enskip
E'_n \subseteq \TT_n(b).
\]
In particular, $E_n$ and $E'_n$ are disjoint.

\begin{claim}\label{cl:F_n_K}
For all compact set $K \subseteq X_{\btau}$ we have
\[
\pimeq(E_n \cap K)
= F_{n, \infty, K} \enskip
\text{and} \enskip
\pimeq(E'_n \cap K)
= F'_{n, \infty, K}.
\]
\end{claim}

\begin{proof}
By symmetry, it is enough to show that $\pimeq(E_n \cap K)
= F_{n, \infty, K}$.
Observe that if $x$ belongs to $E_{n, N_\ell}(a) \cap K$ then $\pimeq(x)$ is in $F_{n, N_\ell, K}(a)$; hence we have the inclusion
\[
\pimeq(E_n \cap K)
\subseteq F_{n, \infty, K}.
\]
Suppose now that $z$ belongs to $F_{n, \infty, K}$.
Then, there is an infinite set $\Lambda \subset \NN$ such that $z$ belongs to $F_{n, N_\ell}$ and a point $x_\ell$ in $S^{z_{N_\ell}} B'_{N_\ell} \cap S^{z_n} B_n(a) \cap K$ for all $\ell \in \Lambda$.
Since $X_{\btau}$ is compact, we can find an infinite set $\Lambda' \subseteq \Lambda$ and $x \in X_{\btau}$ such that $x_\ell \to x$ as $\ell \to +\infty$ and $\ell \in \Lambda'$.
Then $z = \pimeq(x)$ and $x \in E_n \cap K$ since $K$ is compact, which shows that $F_{n, \infty, K}
\subseteq \pimeq(E_n \cap K)$.
\end{proof}

\begin{claim}
We  have $\pimeq (E_n )= \pimeq(E'_n)= F_{n,\infty}$ and
\[
\nu(F_{n, \infty})
\ge \frac{1}{2|\cA_n|^2}
\Bigg(1- \limsup_{N \to +\infty}
\frac{|\coinc'(\tau_{[n,N)})|}{|\tau_{[n,N)}|} \Bigg).
\]
\end{claim}

\begin{proof}
If we put $K = X_{\btau}$ in \Cref{cl:F_n_K} we obtain
$
\pimeq(E_n)
= \pimeq(E'_n)
= F_{n, \infty}$.
On the other hand, from \Cref{lem:measure_cylinders} and \eqref{eq:estimate_non_coinc} we obtain
\[
\nu(F_{n, \infty})
\ge \limsup_{\ell \to +\infty} \frac{|C_{n, N_\ell}|}{|\tau_{[n, N_\ell)}|}
\ge \frac{1}{2|\cA_n|^2}
\Bigg(1- \limsup_{N \to +\infty}
\frac{|\coinc'(\tau_{[n,N)})|}{|\tau_{[n,N)}|} \Bigg).
\]
\end{proof}

This proves the first part of \Cref{lem:limsup_lemma}.

\medskip

We henceforth assume that $\btau$ is $(\cA'_n)_{n \ge 0}$-adapted to $\mu$ for the constant $\delta > 0$.

\begin{claim}
$\mu(E_n)$ and $\mu(E'_n)$ are at least
\[
\frac{\delta}{2|\cA_n|^2}
\Bigg(1 - \limsup_{N \to +\infty}
\frac{|\coinc'(\tau_{[n,N)})|}{|\tau_{[n,N)}|} \Bigg).
\]
\end{claim}

\begin{proof}
Observe that, for each $\ell \ge 0$ and $c \in \{a,b\}$, the sets
\[
\{S^{k |\tau_{[0,n)}| + j} B_{N_\ell}(c') : k \in C_{n, N_\ell}, \enskip 0 \le j < |\tau_{[0,n)}|, \enskip c' \in \cA_{n, N_\ell, k}(c)\}
\]
are disjoint.
Since $\cA_{n, N_\ell, k}(c) \subseteq \cA'_{N_\ell}$, from \eqref{eq:new_delta} we have
\[
\min_{c' \in \cA_{n, N_\ell, k}(c)} \mu(B_{N_\ell}(c'))
\ge \delta / |\tau_{[0, N_\ell)}|.
\]

Thus we obtain 
\[
\mu(E_{n, N_\ell}(c))
\ge |C_{n, N_\ell}| \cdot |\tau_{[0, n)}| \min_{c' \in \cA_{n, N_\ell, k}(c)} \mu(B_{N_\ell}(c'))
\ge \delta \frac{|C_{n, N_\ell}|}{|\tau_{[n, N_\ell)}|}
\]
and \eqref{eq:estimate_non_coinc} implies
\[
\mu\biggl(\limsup_{\ell \to +\infty} E_{n, N_\ell}(c)\biggr)
\ge \frac{\delta}{2|\cA_n|^2}
\Bigg(1 - \limsup_{N \to +\infty}
\frac{|\coinc'(\tau_{[n,N)})|}{|\tau_{[n,N)}|} \Bigg).
\]
\end{proof}

\begin{claim}\label{cl:bound_F_n_K}
For all compact set $K \subseteq X_{\btau}$ we have
\[
\nu\biggl(\limsup_{\ell \to +\infty} F_{n, N_\ell, K}(c)\biggr)
\ge \limsup_{\ell \to +\infty} \nu(F_{n, N_\ell}) - \frac{1}{\delta} \mu(X_{\btau} \setminus K), \enskip
c \in \{a,b\}.
\]
\end{claim}

\begin{proof}
Let $c \in \{a,b\}$ and $\ell \ge 0$.
We define $\cR_{n, N_\ell, K}(c)$ as the set of coordinates $z_{N_\ell}$ in $[0, |\tau_{[0, N_\ell)}|)$ where $z$ ranges over $F_{n, N_\ell} \setminus F_{n, N_\ell, K}(c)$.

From \eqref{eq:haar} we have
\begin{align}\label{eq:ineq_FminusF}
\nu(F_{n, N_\ell} \setminus F_{n, N_\ell, K}(c))
&\le \nu\Bigl(\textstyle\bigcup_{r \in \cR_{n, N_\ell, K}(c)} \{z \in \ZZ_{(|\tau_{[0,n)}|)_{n \ge 0}} :
z_{N_\ell} = r\}\Bigr) \\
&\le \frac{|\cR_{n, N_\ell, K}(c)|}{|\tau_{[0, N_\ell)}|}.
\notag
\end{align}

Observe that if $z$ and $z'$, with $z_{N_l}\not = z_{N_l}'$, belong to $F_{n, N_\ell} \setminus F_{n, N_\ell, K}(c)$, then $S^{z_{N_\ell}} B'_{N_\ell} \cap S^{z_n} B_n(c)$ and $S^{z'_{N_\ell}} B'_{N_\ell} \cap S^{z'_n} B_n(c)$ are disjoint and contained in $X_{\btau} \setminus K$. 
Hence,
\[
|\cR_{n, N_\ell, K}(c)|
\min_{z\in F_{n, N_\ell} \setminus F_{n, N_\ell, K}(c)}
\mu(S^{z_{N_\ell}} B'_{N_\ell} \cap S^{z_n} B_n(c)) \leq
\mu(X_{\btau}\setminus K).
\]
Now, if $z$ belongs to $F_{n, N_\ell} \setminus F_{n, N_\ell, K}(c)$, then from \eqref{eq:inclusion_C_n_m} we have that $S^{z_{N_\ell}} B'_{N_\ell} \cap S^{z_n} B_n(c)$ contains $S^{z_{N_\ell}}B_{N_\ell}(c')$ for some $c'\in\cA'_{N_\ell}$. Hence, by \eqref{eq:new_delta},
\[ 
\min_{z\in F_{n, N_\ell} \setminus F_{n, N_\ell, K}(c)}
\mu(S^{z_{N_\ell}} B'_{N_\ell} \cap S^{z_n} B_n(c)) \geq
\min_{c'\in\cA'_{N_\ell}} \mu(B_{N_\ell}(c')) \geq
\frac{\delta}{|\tau_{[0,N_\ell)}|}.
\]
Combining this with \eqref{eq:ineq_FminusF} yields
\[
\nu(F_{n, N_\ell} \setminus F_{n, N_\ell, K}(c)) \leq
\frac{1}{\delta}\mu(X_{\btau}\setminus K),
\]
and then we finally obtain
\[
\nu\biggl(\limsup_{\ell \to +\infty} F_{n, N_\ell, K}(c)\biggr)
\ge \limsup_{\ell \to +\infty} \nu(F_{n, N_\ell, K}(c))
\ge \limsup_{\ell \to +\infty}
\nu(F_{n, N_\ell}) - \frac{1}{\delta}\mu(X_{\btau} \setminus K).
\]
\end{proof}

\begin{claim}\label{cl:identity_limsup_lim}
We have
\[
\nu(F_{n, \infty})
= \limsup_{\ell \to +\infty} \nu(F_{n, N_\ell}).
\]
\end{claim}

\begin{proof}
Define
\begin{align*}
G_\ell
&= \{k \in [0, |\tau_{[N_\ell, N_{\ell+1})}|) : S^{k |\tau_{[0,N_\ell)}|} B'_{N_{\ell + 1}} \subseteq B'_{N_\ell}\}\\
H_\ell
&= \{z \in \ZZ_{(|\tau_{[0,n)}|)_{n \ge 0}} : S^{z_{N_{\ell+1}}} B'_{N_{\ell+1}} \subseteq S^{z_{N_\ell}} B'_{N_\ell}\}, \enskip
\ell \ge 0\\
H_{\ell^\ast, \infty}
&= \bigcap_{\ell \ge \ell^\ast} H_\ell, \enskip
\ell^\ast \ge 0.
\end{align*}
Observe that $k$ belongs to $G_\ell$ if and only if for all $c$ in $\cA'_{N_{\ell+1}}$ the $k$-th letter of $\tau_{[N_\ell, N_{\ell+1})}(c)$ is in $\cA'_{N_\ell}$.
Hence
\[
|[0, |\tau_{[N_\ell, N_{\ell+1})}|) \setminus G_\ell|
\le \sum_{c \in \cA'_{N_{\ell+1}}} \sum_{d \in \cA_{N_\ell} \setminus \cA'_{N_\ell}} |\tau_{[N_\ell, N_{\ell+1})}(c)|_d.
\]

From \eqref{eq:estimate_freqs} we obtain
\begin{align*}
\frac{|[0, |\tau_{[N_\ell, N_{\ell+1})}|) \setminus G_\ell|}{|\tau_{[N_\ell, N_{\ell+1})}|}
&\le \sum_{c \in \cA'_{N_{\ell+1}}} \sum_{d \in \cA_{N_\ell} \setminus \cA'_{N_\ell}}
\frac{|\tau_{[N_\ell, N_{\ell+1})}(c)|_d}{|\tau_{[N_\ell, N_{\ell+1})}|}\\
&\le \sup_{n \ge 0} |\cA'_n| \bigg(\frac{1}{2^\ell} + \mu(X_{\btau} \setminus \TT'_{N_\ell})\bigg).
\end{align*}

On the other hand, one has $z\in H_\ell$ if and only if $z_{N_{\ell+1}} \in z_{N_\ell} + |\tau_{[0, N_\ell)}| G_\ell$.
From \Cref{lem:measure_cylinders} we have
\[
\nu(H_\ell)
= |G_\ell| / |\tau_{[N_\ell, N_{\ell+1})}|, \enskip
\ell \ge 0.
\]

Therefore, for $\ell^* \ge 0$ one has
\[
\nu \bigg(X_{\btau} \setminus \bigcap_{\ell \ge \ell^\ast} H_\ell \bigg)
\le \sum_{\ell \ge \ell^\ast} \sup_{n \ge 0} |\cA'_n| \bigg(\frac{1}{2^\ell} + \mu(X_{\btau} \setminus \TT'_{N_\ell})\bigg).
\]
Recall that by \eqref{eq:new_delta} we have $\sup_{n \ge 0} |\cA'_n| \le 1 / \delta$, hence by \eqref{eq:new_adapted}
\begin{equation}
\label{eq:intersection_H}
\lim_{\ell^\ast \to +\infty}
\nu \bigg(X_{\btau} \setminus \bigcap_{\ell \ge \ell^\ast} H_\ell \bigg)
= 0.
\end{equation}

\smallskip

Now let $\ell^\ast \le \ell < \ell'$.
Observe that if $z$ belongs to $F_{n, N_{\ell'}} \cap H_{\ell^\ast, \infty}$, then for $c$ in $\{a,b\}$ we have $S^{z_{N_\ell}} B'_{N_\ell} \cap S^{z_n} B_n(c) \subseteq S^{z_{N_{\ell'}}} B'_{N_{\ell'}} \cap S^{z_n} B_n(c)$, which is nonempty.
We deduce that $z$ belongs to $F_{n, N_\ell}$ and so
\[
F_{n, N_{\ell'}} \cap H_{\ell^\ast, \infty} \subseteq F_{n, N_{\ell}} \cap H_{\ell^\ast, \infty},
\]
{\em i.e.}, the sequence of sets $(F_{n, N_\ell} \cap H_{\ell^\ast, \infty})_{\ell \ge \ell^\ast}$ is decreasing.
We obtain
\begin{align*}
& \nu\biggl(\limsup_{\ell \to +\infty} F_{n, N_\ell}\biggr) \\
= & \nu\biggl(\limsup_{\ell \to +\infty} F_{n, N_\ell} \cap H_{\ell^\ast, \infty}\biggr) + \nu\biggl(\limsup_{\ell \to +\infty} F_{n, N_\ell} \cap (X_{\btau} \setminus H_{\ell^\ast, \infty})\biggr)\\
\le & \limsup_{\ell \to +\infty} \nu(F_{n, N_\ell}) + \nu(X_{\btau} \setminus H_{\ell^\ast, \infty}).
\end{align*}
If we let $\ell^\ast \to +\infty$, from \eqref{eq:intersection_H} we deduce
\[
\nu(F_{n, \infty})
= \limsup_{\ell \to +\infty} \nu(F_{n, N_\ell}),
\]
which proves the claim.
\end{proof}

\begin{claim}
We have
\[
\pimeq(E_n \cap U)
= \pimeq(E'_n \cap U)
\pmod \nu
\]
for any Borel set $U \subset X_{\btau}$ such that $\mu(U) = 1$.
\end{claim}

\begin{proof}
By symmetry, it is enough to prove that $\pimeq(E_n \cap U) = F_{n,\infty } \pmod \nu$.
Since $\pimeq(E_n) = F_{n, \infty}$, we have the inclusion $\pimeq(E_n \cap U) \subseteq F_{n, \infty}$.
It remains to prove that $\nu(\pimeq(E_n \cap U)) \ge \nu(F_{n, \infty})$.
Let $\epsilon > 0$.
Since $X_{\btau}$ is compact, $\mu$ is a regular measure and there exists a compact set $K_\epsilon \subseteq U$ with $\mu(X_{\btau} \setminus K_\epsilon) < \epsilon$.

From \Cref{cl:F_n_K} we have
\[
\pimeq(E_n \cap K_\epsilon)
= F_{n, \infty, K_\epsilon}
\]
and from \Cref{cl:bound_F_n_K} and \Cref{cl:identity_limsup_lim} we deduce
\[
\nu(F_{n, \infty, K_\epsilon})
\ge \limsup_{\ell \to +\infty} \nu(F_{n, N_\ell}) - \frac{\epsilon}{\delta}
\ge \nu(F_{n, \infty}) - \frac{\epsilon}{\delta}.
\]
Thus, since $K_\epsilon$ is a subset of $U$, one has
\[
\nu(\pimeq(E_n \cap U))
\ge \nu(F_{n, \infty, K_\epsilon})
\ge \nu(F_{n, \infty}) - \frac{\epsilon}{\delta}.
\]
If we let $\epsilon$ go to $0$, we deduce $\nu(\pimeq(E_n \cap U)) \ge \nu(F_{n, \infty})$, which finishes the proof.
\end{proof}
The achieves the proof of the \Cref{lem:limsup_lemma}.
\end{proof}

%%%%%%%%%%%%%%%%%%%%%%%%%%%%%%%%%%%%%%%%%%%%%%%%%%%%%%%%%%%%
\section{Proof of the main results}
%%%%%%%%%%%%%%%%%%%%%%%%%%%%%%%%%%%%%%%%%%%%%%%%%%%%%%%%%%%%
We will freely use the terminology introduced in \Cref{section:prelim}.

\subsection{Proof of {\Cref{theo:charac_MEF_is_regular}}}

\begin{proof}
From {\Cref{prop:toeplitzSadic}} and recognizability of $\btau$ it is easy to see that \Cref{item:JK_1}, \Cref{item:JK_2}, \Cref{item:MEF_2} and \Cref{item:MEF_3} in \Cref{theo:charac_MEF_is_regular} are equivalent.

We define
\[
a_{n,N}
= 1 - \frac{|\coinc(\tau_{[n,N)})|}{|\tau_{[n,N)}|}, \enskip
0 \le n < N.
\]
From \Cref{lem:inclusion_coinc} we deduce that inequality \eqref{eq:ineq_product} holds.
Hence, we obtain that \Cref{item:MEF_2} and \Cref{item:MEF_4} are equivalent.

\smallskip

It remains to show that \Cref{item:MEF_1} and \Cref{item:MEF_2}.
We set $\cA'_n = \cA_n$ for each $n \ge 0$.
Observe that, with this choice, the directive sequence $\btau$ is $(\cA'_n)_{n \ge 0}$-weakly-adapted to any invariant probability measure $\mu$ of $(X_{\btau}, S)$.
In this case, we have
\[
\cI'
= \{z \in \ZZ_{(|\tau_{[0,n)}|)_{n \ge 0}}:
|\pimeq^{-1}(\{z\})| = 1\}.
\]
Suppose that \Cref{item:MEF_1} holds.
By contradiction, if \Cref{item:MEF_2} does not hold, there exists $n \ge 0$ such that
\[
\limsup_{N \to +\infty}
\frac{|\coinc(\tau_{[n,N)})|}{|\tau_{[n,N)}|}
< 1.
\]
\Cref{lem:limsup_lemma} ensures the existence of disjoints sets $E_n$ and $E'_n$ in $X_{\btau}$ such that $\nu(\pimeq(E_n)) > 0$ and if $z \in \pimeq(E_n)$, then $\pimeq^{-1}(\{z\}) \cap E_n \not= \emptyset$ and $\pimeq^{-1}(\{z\}) \cap E'_n \not= \emptyset$.
Thus $\ZZ_{(|\tau_{[0,n)}|)_{n \ge 0}} \setminus \cI'$ contains $\pimeq(E_n)$, which contradicts the fact that $\nu(\cI') = 1$.

\smallskip

Now assume that \Cref{item:MEF_2} holds.
Then, \Cref{lem:cI'_equals_bigcap_cI'n} and \Cref{lem:measure_cI'n_as_limit} imply that $\nu(\cI') = 1$.
This shows that \Cref{item:MEF_1} holds and finishes the proof.
\end{proof}

\subsection{Proof of \Cref{theo:charac_MEF_is_iso}}

\begin{proof}
Suppose that \Cref{item:iso_1} in \Cref{theo:charac_MEF_is_iso} holds.
This implies that there exists a Borel subset $U$ of $X_{\btau}$ and a Borel subset $V$ of $\ZZ_{(|\tau_{[0,n)}|)_{n \ge 0}}$ such that $\pimeq : U \to V$ is a bijection and $\mu(U) = \nu(V) = 1$.

By contradiction, if \Cref{item:iso_2} does not hold, then there exists $n \ge 0$ such that
\[
\limsup_{N \to +\infty}
\frac{|\coinc_{\cA'_N}(\tau_{[n,N)})|}{|\tau_{[n,N)}|}
< 1.
\]
\Cref{lem:limsup_lemma} then ensures the existence of disjoints sets $E_n$ and $E'_n$ in $X_{\btau}$ such that $\mu(E_n) > 0$, $\mu(E'_n) > 0$ and
\[
\pimeq(E_n \cap U)
= \pimeq(E'_n \cap U)
\pmod \nu.
\]
This implies, as $\pimeq\colon U\to V$ is a bijection, that $E_n \cap U = E'_n \cap U \pmod \mu$, which contradicts the fact that $E_n$ and $E'_n$ are disjoint and of positive measure.

\smallskip

Now assume that \Cref{item:iso_2} holds.
Define
\[
X_\mu
= \pimeq^{-1}(\cI') \cap \TT'.
\]
From \Cref{lem:cI'_equals_bigcap_cI'n}, \Cref{lem:measure_cI'n_as_limit} and \eqref{eq:measure_TT'}, we have $\mu(X_\mu)
= 1$ and $\nu(\pimeq(X_\mu))
= 1$.
On the other hand, the definition of $X_\mu$ implies that $\pimeq : X_\mu \to \pimeq(X_\mu)$ is a bijection.
This shows that \Cref{item:iso_1} holds and finishes the proof.
\end{proof}

\begin{rema}\label{rema:series_rank_two}
Let us observe that when $\btau$ is of alphabet rank two, then it follows from \Cref{lem:inclusion_coinc} and \Cref{item:MEF_4} in \Cref{theo:charac_MEF_is_regular} that $(X_{\btau}, S)$ is a regular extension of its maximal equicontinuous topological factor if and only if
\[
\sum_{n \ge 0} \frac{|\coinc(\tau_n)|}{|\tau_n|}
= +\infty.
\]
Indeed, it follows from \Cref{lem:inclusion_coinc} that if $\btau' = (\tau'_k)_{k \ge 0}$ is a contraction of $\btau$ then
\[
\sum_{k \ge 0} \frac{|\coinc(\tau'_k)|}{|\tau'_k|}
\le \sum_{n \ge 0} \frac{|\coinc(\tau_n)|}{|\tau_n|}.
\]
\end{rema}

%%%%%%%%%%%%%%%%%%%%%%%%%%%%%%
\section{Examples and applications}\label{sec:examples}
%%%%%%%%%%%%%%%%%%%%%%%%%%%%%%

\subsection{Dekking's theorem revisited}
A {\em substitution} is a morphism $\sigma : \cA^\ast \to \cA^\ast$ that is nonerasing.
It naturally defines a directive sequence $\btau = (\sigma, \sigma, \ldots)$ which is of finite alphabet rank and which generates a (possibly empty) substitution subshift $(X_\sigma, S)$.

When $\sigma$ is {\em primitive} (that is, the sequence $\bsigma$ is primitive) the subshift $(X_{\sigma}, S)$ is minimal, has a unique ergodic measure $\mu$ \cite[Chapter V]{Que87}, $\cA_\mu = \cA$ and $\bsigma$ is recognizable \cite{Mos92, Mos96}.

Let $\sigma : \cA^\ast \to \cA^\ast$ be a constant length and primitive substitution.
We say $\sigma$ is {\em pure} if its maximal equicontinuous topological factor is the odometer $(\ZZ_{|\sigma|}, +1)$.
We say $\sigma$ admits a {\em coincidence} if there exists $m \ge 1$ such that $\coinc(\sigma^m) \not= \emptyset$, {\em i.e.}, if the directive sequence $\bsigma = (\sigma, \sigma, \ldots)$ has coincidences.

As a consequence of \Cref{theo:charac_MEF_is_regular} and \Cref{theo:charac_MEF_is_iso}, we recover the following result.

\begin{cor}[{\cite{Dek78}}]
The system $(X_{\sigma}, S, \mu)$ where $\sigma$ is a pure substitution has discrete spectrum if and only if $\sigma$ admits a coincidence.
\end{cor}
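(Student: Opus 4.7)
The plan is to apply \Cref{theo:charac_MEF_is_regular} and \Cref{theo:charac_MEF_is_iso} to the directive sequence $\bsigma=(\sigma,\sigma,\ldots)$. This sequence is constant length, primitive, and recognizable (by Moss\'e), and generates $(X_\sigma,S)$ which is uniquely ergodic with invariant measure $\mu$. Its associated odometer is $\ZZ_{(|\sigma|^n)_{n\ge 0}}=\ZZ_{|\sigma|}$, and purity says this coincides with the maximal equicontinuous topological factor of $(X_\sigma,S)$. Taking $\cA'_n=\cA$ for every $n$, Perron--Frobenius applied to $M_\sigma$ (whose column-sums equal $|\sigma|$) shows that $|\sigma|^{-n}M_\sigma^n$ converges to a strictly positive rank-one matrix, from which one reads off a uniform lower bound $\mu(\TT_n(a))\ge\delta$ for some $\delta>0$. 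Hence $\bsigma$ is $(\cA)$-adapted to $\mu$, and both main theorems apply.

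For the sufficiency direction, suppose $\coinc(\sigma^m)\ne\emptyset$ for some $m\ge 1$. The contraction $\btau'=(\sigma^m,\sigma^m,\ldots)$ satisfies
\[
\sum_{k\ge 0}\frac{|\coinc(\sigma^m)|}{|\sigma|^m}=+\infty,
\]
which is condition \Cref{item:MEF_4} of \Cref{theo:charac_MEF_is_regular}. Since $\bsigma$ has coincidences, \Cref{prop:toeplitzSadic} shows $(X_\sigma,S)$ is a Toeplitz subshift, so \Cref{theo:charac_MEF_is_regular} applies, and its concluding statement gives that $\pimeq\colon(X_\sigma,S,\mu)\to(\ZZ_{|\sigma|},+1,\nu)$ is a measure-theoretic isomorphism. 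Discrete spectrum of $(X_\sigma,S,\mu)$ is then inherited from the odometer.

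For the necessity direction, assume $(X_\sigma,S,\mu)$ has discrete spectrum. I would invoke the classical result of Host that measurable eigenvalues of a primitive constant length substitution subshift are continuous, hence lie in the dual of the maximal equicontinuous topological factor; by purity this dual equals $\widehat{\ZZ_{|\sigma|}}$. The Halmos--von Neumann theorem then gives a measure-theoretic isomorphism $\phi\colon(X_\sigma,S,\mu)\to(\ZZ_{|\sigma|},+1,\nu)$. Composing $\phi$ with $\pimeq$ produces a measurable self-factor-map of the odometer, which by the coalescence property recalled at the end of \Cref{subsec:meq} must be an isomorphism. Therefore $\pimeq$ is itself a measure-theoretic isomorphism, and \Cref{theo:charac_MEF_is_iso} forces $\limsup_{N\to\infty}|\coinc(\sigma^N)|/|\sigma|^N=1$; in particular $\coinc(\sigma^N)\ne\emptyset$ for some $N$, i.e.\ $\sigma$ admits a coincidence.

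The main obstacle lies in the necessity direction, which relies on the external fact that measurable eigenvalues of primitive constant length substitutions are continuous. This is outside the $\cS$-adic framework of the paper, but is essential for upgrading ``$(X_\sigma,S,\mu)$ has discrete spectrum'' to ``$\pimeq$ is a measure-theoretic isomorphism''; once that step is granted, coalescence of the odometer and \Cref{theo:charac_MEF_is_iso} reduce everything to a mechanical application of the machinery already built.
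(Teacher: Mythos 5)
Your proposal is correct and follows essentially the same route as the paper: sufficiency via the coincidence-density conditions of \Cref{theo:charac_MEF_is_regular} (you use condition \Cref{item:MEF_4} with the contraction $(\sigma^m)_{k\ge0}$ where the paper iterates \Cref{eq:ineq_coinc_morphisms} to get condition \Cref{item:MEF_3}, an immaterial difference), and necessity via Host's continuity of measurable eigenvalues, purity, Halmos--von Neumann, coalescence of the odometer, and then \Cref{theo:charac_MEF_is_iso} with $\cA'_n=\cA$. The extra Perron--Frobenius justification that $\bsigma$ is $(\cA)$-adapted to $\mu$ is a fine way to supply what the paper simply asserts as $\cA_\mu=\cA$.
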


\begin{proof}
Suppose that $\sigma$ is pure and admits a coincidence.
Let $m \ge 1$ be such that $\coinc(\sigma^m) \not= \emptyset$.
From \Cref{eq:ineq_coinc_morphisms} we deduce
\[
1 - \frac{|\coinc(\sigma^{mn})|}{|\sigma|^{mn}}
\le \left(1 - \frac{|\coinc(\sigma^m)|}{|\sigma|^m}\right)^n
\to 0 \enskip
\text{as} \enskip
n \to +\infty
\]
which implies by \Cref{theo:charac_MEF_is_regular} that $(X_{\sigma}, S, \mu)$ has discrete spectrum.

Now suppose that $(X_{\sigma}, S, \mu)$ has discrete spectrum.
Every measurable eigenvalue for the system $(X_{\sigma}, S, \mu)$ is necessarily continuous \cite{Hos86}. 
This implies, since $\sigma$ is pure, that there exists a measure theoretical isomorphism $\pi : (X_{\sigma}, S, \mu) \to (\ZZ_{|\sigma|}, +1, \nu)$.
As $(\ZZ_{|\sigma|}, +1, \nu)$ is coalescent (see \Cref{subsec:meq}), the factor map $\pimeq$ is a measure theoretical isomorphism.
Thanks to \Cref{theo:charac_MEF_is_iso} and the fact that $\cA_\mu = \cA$, we deduce
\[
\frac{|\coinc(\sigma^n)|}{|\sigma^n|}
\to 1 \enskip
\text{as} \enskip
n \to +\infty
\]
and hence $\sigma$ has coincidences, finishing the proof.
\end{proof}

We remark that the previous proof shows that for primitive, pure substitutions $\sigma$ the three hypothesis $(X_{\sigma}, S)$ is a regular extension of $(\ZZ_{\sigma}, +1)$; $\pimeq : (X_{\sigma}, S, \mu) \to (\ZZ_{\sigma}, +1, \nu)$ is a measure theoretical isomorphism; and $(X_{\sigma}, S, \mu)$ has discrete spectrum are all equivalent.

\subsection{Example 1}

Rank-one transformations have been studied extensively in ergodic theory since the apparition of the first examples in the '$60$s.
In order to study these systems from the topological and symbolic dynamics perspective, rank-one subshifts have been introduced.
We refer to \cite{Fer96, Fer97} for a more extensive discussion on these subshifts.
Minimal rank-one subshifts are studied from the $\cS$-adic perspective in \cite{AD23}, where the authors prefer to call them {\em Ferenczi subshifts}.
In \cite[Section 4.3]{AD23} it is shown that a minimal Ferenczi subshift possesses an induced system that is (conjugate to) a Toeplitz subshift generated by the constant length directive sequence $\btau = (\tau_n : \cA_{n+1}^\ast \to \cA_n^\ast)_{n \ge 0}$ which have the form    
\begin{equation}\label{eq:Ferenczi}
\tau_n(a)
= L_n a R_n, \enskip a \in \cA_{n+1} 
\end{equation}
for some nonempty words $L_n, R_n$ in $\cA_n^\ast$.
In particular, from \eqref{eq:Ferenczi} we immediately deduce
\[
\frac{|\coinc(\tau_n)|}{|\tau_n|}
\ge \frac{2}{3}, \enskip
n \ge 0
\]
and hence from \Cref{item:MEF_4} in \Cref{theo:charac_MEF_is_regular} we obtain that this induced system is a regular extension of its maximal equicontinuous topological factor.
This improves the remark made in \cite{AD23} saying that these systems are {\em mean equicontinuous} (we refer to \cite{GJY21} for the definition and details about this notion).

\subsection{Example 2}\label{subsec:example_2}
Let $\cA = \{ a,b \}$, $f : \NN \to \NN$ a nonnegative function such that for some pair of constants $s>1$ and $c > 0$ we have $f(n) \ge c n^s$ for $n \ge 0$ and consider the directive sequence $\btau = (\tau_n : \cA^\ast \to \cA^\ast)_{n \ge 0}$ such that for all $n \ge 0$ the morphism $\tau_n : \cA^* \to \cA^*$ is defined by
\[
a \mapsto a^{f(n)} b a, \enskip
b \mapsto b^{f(n)} a a.
\]

Observe that $\btau$ is a constant length primitive directive sequence with coincidences.
Moreover, the sequence $\btau$ is recognizable since the composition matrix of the morphism $\tau_n$ for each $n \ge 0$ is
\[
M_{\tau_n}
= \begin{pmatrix}
f(n)+1 & 2\\
1 & f(n)
\end{pmatrix}
\]
which is invertible \cite[Theorem 4.6]{BSTY19}.
Thus $(X_{\btau}, S)$ is a Toeplitz subshift.

From \Cref{theo:charac_MEF_is_regular} we have that $(X_{\btau}, S)$ is not a regular extension of its maximal equicontinuous topological factor.
Indeed, since $\btau$ is of alphabet rank two, from \Cref{rema:series_rank_two} we have
\[
\sum_{n \ge 0} \frac{|\coinc(\tau_n)|}{|\tau_n|}
= \sum_{n \ge 0} \frac{1}{f(n)+2}
< +\infty
\]
However, the hypothesis on $f$ implies that $(X_{\btau} , S)$ has two ergodic measures $\mu_a$ and $\mu_b$ \cite[Proposition 3.1]{ABKK17}.
We set
\[
\cA_{\mu_a}
= \{a\} \enskip
\text{and} \enskip
\cA_{\mu_b}
= \{b\}.
\]
We see that $\btau$ is $(\cA_{\mu_a})$-adapted to $\mu_a$ and $(\cA_{\mu_b})$-adapted to $\mu_b$ \cite[Theorem 3.3]{BKMS13}.
From \Cref{theo:charac_MEF_is_iso} we conclude that $\pimeq$ defines a measure theoretical isomorphism to the maximal equicontinuous topological factor for both $(X_{\btau}, S, \mu_a)$ and $(X_{\btau}, S, \mu_b)$.

\subsection{Example 3}\label{subsec:example_3}
Let $\cA = \{a, b, c\}$, $f : \NN \to \NN$ a nonnegative function such that for some pair of constants $s>1$ and $c > 0$ we have $f(n) \ge c n^s$ for $n \ge 0$ and consider the directive sequence $\btau = (\tau_n : \cA^\ast \to \cA^\ast)_{n \ge 0}$ such that for all $n \ge 0$ the morphism $\tau_n : \cA^* \to \cA^*$ is defined by
\begin{align*}
a &\mapsto (ab)^{f(n)+1} ac\\ 
b &\mapsto (ab)^{f(n)+1} bc\\
c &\mapsto c(ab)^{f(n)}c cc.
\end{align*}

Observe that $\btau$ is a constant length primitive directive sequence with coincidences.
Moreover, the sequence $\btau$ is recognizable since the composition matrix of the morphism $\tau_n$ for each $n \ge 0$ is
\begin{equation}\label{eq:matrix_ex_3}
M_{\tau_n}
= \begin{pmatrix}
f(n)+2 & f(n)+1 & f(n) \\
f(n)+1 & f(n)+2 & f(n)\\
1 & 1 & 4
\end{pmatrix}
\end{equation}
which is invertible \cite[Theorem 4.6]{BSTY19}.
Thus $(X_{\btau}, S)$ is a Toeplitz subshift.

From \Cref{theo:charac_MEF_is_regular} we have that $(X_{\btau} , S)$ is not a regular extension of its maximal equicontinuous topological factor.
Indeed, by contradiction assume that
\[
1 - \frac{|\coinc(\tau_{[0,N)})|}{|\tau_{[0,N)}|}
\to 0 \enskip
\text{as} \enskip
N \to +\infty.
\]
By induction, it is easy to see that
\[
1 - \frac{|\coinc(\tau_{[0,N+1)})|}{|\tau_{[0,N+1)}|}
\ge \Bigg(1 - \frac{1}{2f(N)+4}\Bigg) \Bigg(1 - \frac{|\coinc(\tau_{[0,N)})|}{|\tau_{[0,N)}|} \Bigg), \enskip
N \ge 1
\]
and hence
\[
1 - \frac{|\coinc(\tau_{[0,N)})|}{|\tau_{[0,N)}|}
\ge \prod_{k=1}^{N-1} \Big(1 - \frac{1}{2f(k)+4}\Big)
\Big( 1 - \frac{|\coinc(\tau_0)|}{|\tau_0|}\Big)
\]
but it is standard to check that, under the hypothesis on $f$, the infinite product $\prod_{k=1}^{\infty} \Big(1 - \frac{1}{2f(k)+4}\Big)$ does not converge to $0$.
This contradiction shows that $(X_{\btau} , S)$ is not a regular extension of its maximal equicontinuous topological factor.

On the other hand, from \eqref{eq:matrix_ex_3} the vectors $(v_n)_{n \ge 0}$ in the statement of \Cref{theo:CriteriaUniqErgo} (for the trivial contraction of $\btau$) satisfy
\[
\sum_{n \ge 0} \frac{|v_n|}{|\tau_n|}
= \sum_{n \ge 0} \frac{2f(n)+1}{2f(n)+4}
= +\infty
\]
which implies that the system $(X_{\btau}, S)$ is uniquely ergodic.
Denote by $\mu$ the unique invariant probability measure and let $\cA_\mu = \{a, b\}$.
We claim that $\btau$ is $(\cA_\mu)$-adapted to $\mu$.
Indeed, from \eqref{eq:inv_measure} and \eqref{eq:measure_tower} we have
\begin{align*}
\mu(\TT_n(a))
&= \mu_n(a) |\tau_{[0,n)}|
= |\tau_{[0,n)}| \sum_{\ell} M_{\tau_n}(a, \ell) \mu_{n+1}(\ell)
\ge f(n) |\tau_{[0,n)}| \sum_{\ell} \mu_{n+1}(\ell)\\
&= f(n) |\tau_{[0,n)}| \sum_{\ell} \frac{\mu(\TT_{n+1}(\ell))}{|\tau_{[0, n+1)}|}
= \frac{f(n)}{2f(n)+4}
\ge \frac{1}{6}.
\end{align*}
A similar computation shows $\mu(\TT_n(b)) \ge 1/6$.
On the other hand, we have
\begin{align*}
\sum_{n \ge 0} \mu(\TT_n(c))
&= \sum_{n \ge 0} \mu_n(c) |\tau_{[0,n)}|
= \sum_{n \ge 0} |\tau_{[0,n)}| \sum_{\ell} M_{\tau_n}(a, \ell) \mu_{n+1}(\ell)\\
&\le \sum_{n \ge 0} 4 |\tau_{[0,n)}| \sum_{\ell} \mu_{n+1}(\ell)
= \sum_{n \ge 0} 4 |\tau_{[0,n)}| \sum_{\ell} \frac{\mu(\TT_{n+1}(\ell))}{|\tau_{[0, n+1)}|}\\
&\le \sum_{n \ge 0} \frac{4}{2 f(n) + 4}
< +\infty
\end{align*}
and thus conditions \eqref{eq:new_adapted} and \eqref{eq:new_delta} are satisfied.

Finally, for all $n \ge 0$ we have
\[
\lim_{N \to +\infty} \frac{|\coinc_\mu(\tau_{[n,N)})|}{|\tau_{[n,N)}|}
= 1.
\]
Indeed, it is easy to check that
\[
1 - \frac{|\coinc_\mu(\tau_{[n,N+1)})|}{|\tau_{[n,N+1)}|}
\ge \frac{1}{2} \Bigg(1 - \frac{|\coinc(\tau_{[n,N)})|}{|\tau_{[n,N)}|} \Bigg), \enskip
N \ge n
\]
and hence
\[
1 - \frac{|\coinc(\tau_{[n,N)})|}{|\tau_{[n,N)}|}
= \frac{1}{2^{N-n-1}}
\Bigg(1 - \frac{|\coinc(\tau_n)|}{|\tau_n|}\Bigg)
\to 0 \enskip
\text{as} \enskip
N \to +\infty.
\]
From \Cref{theo:charac_MEF_is_iso} we conclude that $\pimeq$ defines a measure theoretical isomorphism between $(X_{\btau} , S, \mu)$ and its maximal equicontinuous topological factor.

\subsection{Example 4}
\label{sec:ex4}
Let $\cA = \{a,b,c\}$, $m : \N \to \N$ the function such that $2m(n)+3 = 3^n$ for all $n \ge 0$ and consider the directive sequence $\btau = (\tau_n : \cA^\ast \to \cA^\ast)_{n \ge 0}$ such that for all $n \ge 0$ the morphism $\tau_n : \cA^* \to \cA^*$ is defined by
\begin{align*}
a & \mapsto (ab)^{m(n)} a b c\\
b & \mapsto a(ab)^{m(n)}  a c\\
c & \mapsto (ab)^{m(n)} c b c.
\end{align*}

Observe that $\btau$ is a constant length and primitive directive sequence with coincidences.
Moreover, the sequence $\btau$ is recognizable since the composition matrix of the morphism $\tau_n$ for each $n \ge 0$ is
\[
M_{\tau_n}
= \begin{pmatrix}
m(n)+1 & m(n)+2 & m(n) \\
m(n)+1 & m(n)   & m(n)+1\\
1 & 1 & 2
\end{pmatrix}
\]
which is invertible \cite[Theorem 4.6]{BSTY19}.
Thus $(X_{\btau} , S)$ is a Toeplitz subshift with maximal equicontinuous topological factor $(\ZZ_3, +1)$.
Moreover, from \Cref{theo:CriteriaUniqErgo} it has a unique ergodic measure $\mu$.
Nevertheless $\pimeq$ is not a measure theoretical isomorphism but $(X_{\btau} , S, \mu)$ has discrete spectrum.
Indeed, an easy computation similar to the previous examples which is left to the reader shows that for $\cA_\mu = \{a, b\}$ we have $\btau$ is $(\cA_\mu)$-adapted to $\mu$ and
\[
1 - \frac{|\coinc_\mu(\tau_{[0,N)})|}{|\tau_{[0,N)}|}
= \prod_{k=1}^{N-1} \Big(1 - \frac{2}{3^k}\Big)
\Big( 1 - \frac{|\coinc_\mu(\tau_0)|}{|\tau_0|}\Big),
\]
which does not converge to $0$ as $N \to +\infty$ and from \Cref{theo:charac_MEF_is_iso} we obtain that $\pimeq$ is not a measure theoretical isomorphism.

Let us show it has discrete spectrum. 
Let $P$ be the subset of $X_{\btau}$ given by
\[
P
= \liminf_{n \to +\infty} (\cT_n(a) \cup \cT_n(b)).
\]
The following arguments mainly come from \cite{BDM10}.
The Borel--Cantelli lemma implies that $\mu(P) = 1$.
For $n \ge 0$ define
\[
f_n(x)
= \begin{cases}
$0$ &\text{if \enskip $x \in S^j B_n (a)$, \enskip where $0 \le j < |\tau_{[0,n)}|$ is even}\\
$1$ &\text{if \enskip $x \in S^j B_n(b) \cup S^j B_n(c)$, \enskip where $0 \le j < |\tau_{[0,n)}|$ is odd}.
\end{cases}
\]

Let $A_n = \{x \in X_{\btau} : f_n (x) \not = f_{n+1} (x)\}$.
A quick computation by cases shows that 
\[
A_n
\subseteq \Bigg(\bigcup_{0 \le j < |\tau_{[0,n)}|} S^j B_{n+1}(b)\Bigg) \cup \cT_n (c) \cup \cT_{n+1}(c).
\]
It is easy to check that
\[
\mu(\cT_n(c))
\le \frac{2}{3^n}, \enskip
\mu \Bigg(\bigcup_{0 \le j < |\tau_{[0,n)}|} S^j B_{n+1}(b) \Bigg)
\le \frac{1}{3^n} 
\]
and consequently $\sum \mu(A_n)$ converges.
The Borel--Cantelli lemma again implies that the sequence $(f_n)_{n \ge 0}$ converges $\mu$-almost everywhere to some measurable function $f : X_{\btau} \to \ZZ / 2\ZZ$ that satisfies $f(Sx) = f(x) + 1 \pmod 2$ for $\mu$-almost every $x$.

Denote by $(\ZZ_3 \times \ZZ / 2\ZZ, +(1, 1))$ the product system of $(\ZZ_3, +1)$ and $(\ZZ / 2\ZZ, +1)$ and let $\nu$ be the Haar measure on this product.
Consider the measurable factor map $F : (X_{\btau}, S , \mu) \to (\ZZ_3 \times \ZZ / 2\ZZ, +(1,1), \nu)$ defined by
\begin{align*}
F : X_{\btau}  &\to \ZZ_3 \times \ZZ / 2\ZZ\\
x                &\mapsto  (\pimeq (x), f(x))
\end{align*}

We claim that $F$ defines a measure theoretical isomorphism.
Indeed, it is sufficient to prove that the map $F$ is injective on $P$.
If $F(x) = F(y) = (z, w)$ for some $x,y$ in $P$, we have that for all large values of $n$ we have both $x$ and $y$ belonging to $S^{z_n} \cT_n(w)$, where $z=(z_n)_{n \ge 0}$.
Hence, since $\btau$ is proper, the atoms of the sequence of partitions $(\cT_n)_{n \ge 0}$ generate the topology of $(X_{\btau}, S)$ (see \Cref{subsec:reco}) and we obtain $x = y$.

\subsection{Example 5}
\label{sec:ex5}
Let $\cA = \{1, 2, a, b, c\}$, $s : \NN \to \NN$ a nonnegative function such that $2s(n)+5 = 3^n$ for $n \ge 0$ and consider the directive sequence $\btau = (\tau_n : \cA^\ast \to \cA^\ast)_{n \ge 0}$ such that for all $n \ge 0$ the morphism $\tau_n : \cA^* \to \cA^*$ is defined by
\[
\begin{array}{ll}
1 \mapsto a (12)^{s(n)} 1 2 b c,  & a \mapsto (ab)^{s(n)} a b 1 2 c\\
2 \mapsto a(12)^{s(n)} 2 1 b c, & b \mapsto  a(ab)^{s(n)} b 1 2 c\\
& c \mapsto (ab)^{s(n)} a b 1 2 c.
\end{array}
\]

Observe that $\btau$ is a constant length, recognizable and primitive directive sequence with coincidences.
Thus $(X_{\btau} , S)$ is a Toeplitz subshift with maximal equicontinuous topological factor $(\ZZ_3, +1)$.

The composition matrix of the morphism $\tau_n$ for each $n \ge 0$ is
\[
M_{\tau_n}
= \begin{pmatrix}
s(n)+1 & s(n)+1 & 1 & 1 & 1\\
s(n)+1 & s(n)+1 & 1 & 1 & 1\\
1 & 1 & s(n)+1 & s(n)+1 & s(n)+1\\
1 & 1 & s(n)+1 & s(n)+1 & s(n)+1\\
1 & 1 & 1 & 1 & 1.
\end{pmatrix}
\]

The subshift $(X_{\btau} , S)$ has two ergodic measures $\mu_{a,b}$ and $\mu_{1,2}$.
Indeed, since the rank of each matrix $M_{\tau_n}$ is two, $(X_{\btau} , S)$ has at most two different ergodic measures.
Moreover, the system is not uniquely ergodic thanks to \Cref{theo:CriteriaUniqErgo}: a simple yet tedious computation shows that for any increasing sequence $(n_k)_{k \ge 0}$ the vectors $(v_k)_{k \ge 0}$ satisfy $|v_k| = \prod_{n_k < j \le n_{k+1}} (2 s(j) + 3)$ and so
\[
\sum_{k \ge 0} \frac{|v_k|}{|\tau'_k|}
= \frac{1}{2 s(n_k) + 5}
< +\infty.
\]
where $\tau'_k = \tau'_{[n_k, n_{k+1})}$, $k \ge 0$.

Let $\cA_{\mu_{a, b}} = \{a, b\}$ and $\cA_{\mu_{1, 2}} = \{1, 2\}$.
Observe that, from the form of the composition matrix $M_{\tau_n}$, for any ergodic measure $\mu$ of $(X_{\btau}, S)$ we have
\begin{equation*}
\mu(\cT_n(c)) \to 0, \enskip
\mu(\cT_n(a)) = \mu(\cT_n(b))
\enskip \text{and} \enskip
\mu(\cT_n(1)) = \mu(\cT_n(2)).
\end{equation*}
Thus, without loss of generality, $\btau$ is $(\cA_{\mu_{a, b}})$-adapted to $\mu_{a,b}$ and $(\cA_{\mu_{1,2}})$-adapted to $\mu_{1,2}$.
We left the details to the reader.

The map $\pimeq$ is a measure theoretical isomorphism with respect to $\mu_{1,2}$ whereas it is not with respect to  $\mu_{a,b}$ as it has $-1$ as an eigenvalue.
Moreover, as in the previous example, $(X_{\btau} , S, \mu_{a, b})$ has discrete spectrum: it is measure theoretically isomorphic to $(\ZZ_3 \times \ZZ / 2 \ZZ, +(1, 1), \nu)$ (here $\nu$ is the Haar measure on $\ZZ_3 \times \ZZ / 2\ZZ$).
We leave it to the reader to verify these statements in the light of the calculations made earlier.

\subsection{Example 6}
\label{sec:ex6}
Above we gave example of Toeplitz subshifts with discrete spectrum. 
There are of course many Toeplitz subshifts that has a non discrete spectrum such that those with positive entropy. 
All examples we gave has a finite topological rank and are thus of entropy zero. 
It is interesting to give a finite topological rank Toeplitz shift that does not have a discrete spectrum. 

Let $\cA = \{1, 2\}$, $s : \NN \to \NN$ a nonnegative function such that $3s(n)+1 = 5^{2n}$ for $n \ge 0$ and consider the directive sequence $\btau = (\tau_n : \cA^\ast \to \cA^\ast)_{n \ge 0}$ such that for all $n \ge 0$ the morphism $\tau_n : \cA^* \to \cA^*$ is defined by
\[
1 \mapsto (121)^{s(n)} 2, \enskip
2 \mapsto 1(121)^{s(n)}.
\]
The directive sequence $\btau$ is clearly recognizable. 
Let us show that $(X_{\btau} , S)$ does not have non continuous eigenvalues. 
Suppose that $\lambda = \exp (2i \pi \alpha )$ is such an eigenvalue. 
Then, from  \cite[Proposition 28]{BDM10} one can suppose $\alpha = 1/2$.
But from \cite[Corollary 16]{DFM19} and some computations one can show $\lambda = -1$ cannot be an eigenvalue. 

Thus, the maximal equicontinuous measurable factor is $(\ZZ_5 , +1)$ and it cannot be measure theoretically isomorphic to $(X_{\btau} , S)$ as $(\ZZ_5 , +1)$ is coalescent \cite{HP68}.
Indeed, otherwise $\pimeq$ would be a measure theoretical isomorphism which is not possible 
because $\btau$ does not fulfill \Cref{item:iso_2} of \Cref{theo:charac_MEF_is_iso}.

\subsection{Example 7}
\label{sec:ex7}
We finish this section with the description of a non Toeplitz subshift generated by a constant length directive sequence that has positive topological entropy and where the conclusion of \Cref{theo:charac_MEF_is_iso} holds.

Let $(\ell_n)_{n \ge 0}$ be a sequence of nonnegative numbers such that
\[
\ell_{n+1}
= (\ell_n - 1)! + 1, \enskip
n \ge 0.
\]

For $n \ge 0$ let $\cA_n = \{a_n, b_n(1), \ldots, b_n(\ell_n - 1)\}$.
The choice of $\ell_{n+1}$ implies that there exists a bijection $\pi_n$ between $\{b_{n+1}(1), \ldots, b_{n+1}(\ell_{n+1}-1)\}$ and the set of all words in $\cA_n^\ast$, which can be defined using each letter in $\{b_n(1), \ldots, b_n(\ell_n-1)\}$ exactly once.
Let $\pi_n'$ be an onto map between $\{b_{n+1}(1), \ldots, b_{n+1}(\ell_{n+1}-1)\}$ and $\cA_n^2$.
Consider the directive sequence $\btau = (\tau_n : \cA_{n+1}^\ast \to \cA_n^\ast)_{n \ge 0}$ such that for all $n \ge 0$ the morphism $\tau_n : \cA_{n+1}^\ast \to \cA_n^\ast$ is defined by
\begin{align*}
\tau_n(b_{n+1}(i))
&= \pi_n(b_{n+1}(i)) \pi_n'(b_{n+1}(i)) a_n, \enskip
1 \le i < \ell_{n+1}\\
\tau_n(a_{n+1})
&= a_n^{\ell_n + 1} b_n(1).
\end{align*}

The fact that $\btau$ is a constant length directive sequence that is primitive, injective on letters, recognizable and without coincidences is left to the reader.
Since every word in $\cA_n^2$ occurs in the images of $\tau_n$, it can be checked that the maximal equicontinuous topological factor of $(X_{\btau}, S)$ corresponds to the odometer $(\ZZ_{(|\tau_{[0,n)}|)_{n \ge 0}}, +1)$.
Thus, since $\btau$ does not have coincidences $(X_{\btau}, S)$ is a minimal non Toeplitz subshift.

Stirling's approximation implies that
\[
\log \ell_{n+1}
> (\ell_n - 1) \log(\ell_n - 1) - (\ell_n - 1).
\]
Since $\ell_k - 1 \ge 2^k$ for $k \ge 0$, inductively we obtain
\begin{align*}
\frac{\log \ell_{n+1}}{(\ell_0 + 2) \ldots (\ell_n + 2)}
&> \prod_{k=0}^n \Big(1 - \frac{3}{\ell_k + 2} \Big) \log(\ell_0 - 1) - \sum_{k=0}^n \frac{(\ell_k - 1)\ldots (\ell_n - 1)}{(\ell_0 + 2) \ldots (\ell_n + 2)}\\
&> \prod_{k=0}^n \Big(1 - \frac{3}{2^k} \Big) \log(\ell_0 - 1) - 2.
\end{align*}
From \cite[Lemma 2.6]{BH94} the topological entropy of $(X_{\btau}, S)$ is
\[
\lim_{n \to +\infty} \frac{\log |\cA_n|}{|\tau_{[0,n)}|}
=
\lim_{n \to +\infty} \frac{\log \ell_n}{(\ell_0+2) \ldots (\ell_{n-1} + 2)}
> 0
\]
if we choose $\ell_0$ such that $\prod_{k=0}^\infty \Big(1 - \frac{3}{2^k} \Big) \log(\ell_0 - 1)
> 2$.
In particular $(X_{\btau}, S)$ is not of finite topological rank.

Finally if $\cA'_n = \{a_n\}$ for $n \ge 0$ then there exists an ergodic measure $\mu$ such that $\btau$ is $(\cA'_n)_{n \ge 0}$-adapted to $\mu$.
\Cref{theo:charac_MEF_is_iso} implies that $(X_{\btau}, S, \mu)$ is isomorphic to $(\ZZ_{(|\tau_{[0,n)}|)_{n \ge 0}}, +1, \nu)$.

%%%%%%%%%%%%%%%%%%%%%%%%%%%%%%%%%%%%%%%%%%%%%%%
% Bibliography
%%%%%%%%%%%%%%%%%%%%%%%%%%%%%%%%%%%%%%%%%%%%%%%

\bibliographystyle{alpha}
\bibliography{biblio}

\newcommand{\etalchar}[1]{$^{#1}$}
\begin{thebibliography}{BCBD{\etalchar{+}}21}

\bibitem[ABKK17]{ABKK17}
M.~Adamska, S.~Bezuglyi, O.~Karpel, and J.~Kwiatkowski.
\newblock Subdiagrams and invariant measures on {B}ratteli diagrams.
\newblock {\em Ergodic Theory Dynam. Systems}, 37:2417--2452, 2017.

\bibitem[AD23]{AD23}
Felipe Arbul{\'u} and Fabien Durand.
\newblock Dynamical properties of minimal ferenczi subshifts.
\newblock {\em Ergodic Theory Dynam. Systems}, pages 1--48, 2023.

\bibitem[BBK06]{BBK06}
Veronica Baker, Marcy Barge, and Jaroslaw Kwapisz.
\newblock Geometric realization and coincidence for reducible non-unimodular
  {P}isot tiling spaces with an application to {$\beta$}-shifts.
\newblock {\em Ann. Inst. Fourier (Grenoble)}, 56:2213--2248, 2006.
\newblock Num\'{e}ration, pavages, substitutions.

\bibitem[BCBD{\etalchar{+}}21]{BCBD+21}
V.~Berth{\'e}, P.~Cecchi~Bernales, F.~Durand, J.~Leroy, D.~Perrin, and
  S.~Petite.
\newblock On the dimension group of unimodular $\mathcal{S}$-adic subshifts.
\newblock {\em Monatshefte f{\"u}r Mathematik}, 2021.

\bibitem[BD02]{BD02}
Marcy Barge and Beverly Diamond.
\newblock Coincidence for substitutions of {P}isot type.
\newblock {\em Bull. Soc. Math. France}, 130:619--626, 2002.

\bibitem[BD14]{BD14}
Val{\'e}rie Berth{\'e} and Vincent Delecroix.
\newblock Beyond substitutive dynamical systems: {$S$}-adic expansions.
\newblock In {\em Numeration and substitution 2012}, RIMS K\^{o}ky\^{u}roku
  Bessatsu, B46, pages 81--123. Res. Inst. Math. Sci. (RIMS), Kyoto, 2014.

\bibitem[BDM10]{BDM10}
Xavier Bressaud, Fabien Durand, and Alejandro Maass.
\newblock On the eigenvalues of finite rank {B}ratteli-{V}ershik dynamical
  systems.
\newblock {\em Ergodic Theory Dynam. Systems}, 30:639--664, 2010.

\bibitem[BH94]{BH94}
Mike Boyle and David Handelman.
\newblock Entropy versus orbit equivalence for minimal homeomorphisms.
\newblock {\em Pacific J. Math.}, 164:1--13, 1994.

\bibitem[BK06]{BK06}
Marcy Barge and Jaroslaw Kwapisz.
\newblock Geometric theory of unimodular {P}isot substitutions.
\newblock {\em Amer. J. Math.}, 128:1219--1282, 2006.

\bibitem[BKMS13]{BKMS13}
S.~Bezuglyi, J.~Kwiatkowski, K.~Medynets, and B.~Solomyak.
\newblock Finite rank {B}ratteli diagrams: structure of invariant measures.
\newblock {\em Trans. Amer. Math. Soc.}, 365:2637--2679, 2013.

\bibitem[BPR23]{BPR23}
Marie-Pierre B{\'e}al, Dominique Perrin, and Antonio Restivo.
\newblock Recognizability of morphisms.
\newblock {\em Ergodic Theory Dynam. Systems}, pages 1--25, 2023.

\bibitem[BPRS23]{BPRS23}
Marie-Pierre B{\'e}al, Dominique Perrin, Antonio Restivo, and Wolfgang Steiner.
\newblock Recognizability in $\mathcal{S}$-adic shifts, 2023.

\bibitem[BSTY19]{BSTY19}
Val{\'e}rie Berth{\'e}, Wolfgang Steiner, J\"{o}rg~M. Thuswaldner, and Reem
  Yassawi.
\newblock Recognizability for sequences of morphisms.
\newblock {\em Ergodic Theory Dynam. Systems}, 39:2896--2931, 2019.

\bibitem[CS01]{CS01}
Vincent Canterini and Anne Siegel.
\newblock Geometric representation of substitutions of {P}isot type.
\newblock {\em Trans. Amer. Math. Soc.}, 353:5121--5144, 2001.

\bibitem[DDMP21]{DDMP21}
Sebasti{\'a}n Donoso, Fabien Durand, Alejandro Maass, and Samuel Petite.
\newblock Interplay between finite topological rank minimal {C}antor systems,
  {$\mathcal{S}$}-adic subshifts and their complexity.
\newblock {\em Trans. Amer. Math. Soc.}, 374:3453--3489, 2021.

\bibitem[Dek78]{Dek78}
F.~M. Dekking.
\newblock The spectrum of dynamical systems arising from substitutions of
  constant length.
\newblock {\em Z. Wahrscheinlichkeitstheorie und Verw. Gebiete}, 41:221--239,
  1978.

\bibitem[DFM19]{DFM19}
Fabien Durand, Alexander Frank, and Alejandro Maass.
\newblock Eigenvalues of minimal {C}antor systems.
\newblock {\em J. Eur. Math. Soc. (JEMS)}, 21:727--775, 2019.

\bibitem[DG16]{DG16}
Tomasz Downarowicz and Eli Glasner.
\newblock Isomorphic extensions and applications.
\newblock {\em Topol. Methods Nonlinear Anal.}, 48:321--338, 2016.

\bibitem[DK15]{DK15}
Tomasz Downarowicz and Stanis\l~aw Kasjan.
\newblock Odometers and {T}oeplitz systems revisited in the context of
  {S}arnak's conjecture.
\newblock {\em Studia Math.}, 229:45--72, 2015.

\bibitem[DL12]{DL12}
Fabien Durand and Julien Leroy.
\newblock $\mathcal{S}$-adic conjecture and {B}ratteli diagrams.
\newblock {\em C. R. Math. Acad. Sci. Paris}, 350:979--983, 2012.

\bibitem[Dow05]{Dow05}
Tomasz Downarowicz.
\newblock Survey of odometers and {T}oeplitz flows.
\newblock In {\em Algebraic and topological dynamics}, volume 385 of {\em
  Contemp. Math.}, pages 7--37. Amer. Math. Soc., Providence, RI, 2005.

\bibitem[DP22]{DP22}
Fabien Durand and Dominique Perrin.
\newblock {\em Dimension groups and dynamical systems --- Substitutions,
  {B}ratteli diagrams and {C}antor systems}, volume 196 of {\em Cambridge
  Studies in Advanced Mathematics}.
\newblock Cambridge University Press, Cambridge, 2022.

\bibitem[EG60]{EG60}
Robert Ellis and W.~H. Gottschalk.
\newblock Homomorphisms of transformation groups.
\newblock {\em Trans. Amer. Math. Soc.}, 94:258--271, 1960.

\bibitem[EM22]{EM22}
Basti{\'a}n Espinoza and Alejandro Maass.
\newblock On the automorphism group of minimal $\mathcal {S}$-adic subshifts of
  finite alphabet rank.
\newblock {\em Ergodic Theory Dynam. Systems}, 42(9):2800--2822, 2022.

\bibitem[Esp22]{Esp22}
Basti{\'a}n Espinoza.
\newblock Symbolic factors of $\mathcal{S}$-adic subshifts of finite alphabet
  rank.
\newblock {\em Ergodic Theory Dynam. Systems}, pages 1--37, 2022.

\bibitem[Esp23]{Esp23}
Basti{\'a}n Espinoza.
\newblock The structure of low complexity subshifts, 2023.

\bibitem[Fer96]{Fer96}
S{\'e}bastien Ferenczi.
\newblock Rank and symbolic complexity.
\newblock {\em Ergodic Theory Dynam. Systems}, 16:663--682, 1996.

\bibitem[Fer97]{Fer97}
S{\'e}bastien Ferenczi.
\newblock Systems of finite rank.
\newblock {\em Colloq. Math.}, 73:35--65, 1997.

\bibitem[FFT09]{FFT09}
Sebastien Ferenczi, Albert~M. Fisher, and Marina Talet.
\newblock Minimality and unique ergodicity for adic transformations.
\newblock {\em J. Anal. Math.}, 109:1--31, 2009.

\bibitem[Fog02]{Pyth02}
N.~Pytheas Fogg.
\newblock {\em Substitutions in dynamics, arithmetics and combinatorics},
  volume 1794 of {\em Lecture Notes in Mathematics}.
\newblock Springer-Verlag, Berlin, 2002.
\newblock Edited by V. Berth\'{e}, S. Ferenczi, C. Mauduit and A. Siegel.

\bibitem[Fur60]{Fur60}
Harry Furstenberg.
\newblock {\em Stationary processes and prediction theory}.
\newblock Annals of Mathematics Studies, No. 44. Princeton University Press,
  Princeton, N.J., 1960.

\bibitem[GJ00]{GJ00}
Richard Gjerde and O.~Johansen.
\newblock Bratteli-{V}ershik models for {C}antor minimal systems: applications
  to {T}oeplitz flows.
\newblock {\em Ergodic Theory Dynam. Systems}, 20:1687--1710, 2000.

\bibitem[GLL22]{GLL22}
France Gheeraert, Marie Lejeune, and Julien Leroy.
\newblock $\mathcal{S}$-adic characterization of minimal ternary dendric
  shifts.
\newblock {\em Ergodic Theory Dynam. Systems}, 42:3393--3432, 2022.

\bibitem[GR17]{Gar17}
Felipe Garc{\'\i}a-Ramos.
\newblock Weak forms of topological and measure-theoretical equicontinuity:
  relationships with discrete spectrum and sequence entropy.
\newblock {\em Ergodic Theory Dynam. Systems}, 37:1211--1237, 2017.

\bibitem[GRJY21]{GJY21}
Felipe Garc{\'\i}a-Ramos, Tobias J\"{a}ger, and Xiangdong Ye.
\newblock Mean equicontinuity, almost automorphy and regularity.
\newblock {\em Israel J. Math.}, 243:155--183, 2021.

\bibitem[HLSY21]{HLSY21}
Wen Huang, Zhengxing Lian, Song Shao, and Xiangdong Ye.
\newblock Minimal systems with finitely many ergodic measures.
\newblock {\em Journal of Functional Analysis}, 280:109000, 2021.

\bibitem[Hos86]{Hos86}
B.~Host.
\newblock Valeurs propres des syst\`emes dynamiques d\'{e}finis par des
  substitutions de longueur variable.
\newblock {\em Ergodic Theory Dynam. Systems}, 6:529--540, 1986.

\bibitem[HP68]{HP68}
Frank Hahn and William Parry.
\newblock Some characteristic properties of dynamical systems with
  quasi-discrete spectra.
\newblock {\em Math. Systems Theory}, 2:179--190, 1968.

\bibitem[HS03]{HS03}
Michael Hollander and Boris Solomyak.
\newblock Two-symbol {P}isot substitutions have pure discrete spectrum.
\newblock {\em Ergodic Theory Dynam. Systems}, 23:533--540, 2003.

\bibitem[JK69]{JK69}
Konrad Jacobs and Michael Keane.
\newblock {$0-1$}-sequences of {T}oeplitz type.
\newblock {\em Z. Wahrscheinlichkeitstheorie und Verw. Gebiete}, 13:123--131,
  1969.

\bibitem[Liv87]{Liv87}
A.~N. Livshits.
\newblock On the spectra of adic transformations of {M}arkov compact sets.
\newblock {\em Uspekhi Mat. Nauk}, 42:189--190, 1987.

\bibitem[LV92]{LV92}
A.~N. Livshits and A.~M. Vershik.
\newblock Adic models of ergodic transformations, spectral theory,
  substitutions, and related topics.
\newblock In {\em Representation theory and dynamical systems}, volume~9 of
  {\em Adv. Soviet Math.}, pages 185--204. Amer. Math. Soc., Providence, RI,
  1992.

\bibitem[Mar75]{Mar75}
Nelson~G. Markley.
\newblock Substitution-like minimal sets.
\newblock {\em Israel J. Math.}, 22:332--353, 1975.

\bibitem[Mos92]{Mos92}
Brigitte Moss\'{e}.
\newblock Puissances de mots et reconnaissabilit\'{e} des points fixes d'une
  substitution.
\newblock {\em Theoret. Comput. Sci.}, 99:327--334, 1992.

\bibitem[Mos96]{Mos96}
Brigitte Moss\'{e}.
\newblock Reconnaissabilit\'{e} des substitutions et complexit\'{e} des suites
  automatiques.
\newblock {\em Bull. Soc. Math. France}, 124:329--346, 1996.

\bibitem[Que87]{Que87}
Martine Queff{\'e}lec.
\newblock {\em Substitution dynamical systems --- Spectral Analysis}, volume
  1294 of {\em Lecture Notes in Mathematics}.
\newblock Springer-Verlag, Berlin, 1987.

\bibitem[Sen81]{Sen81}
E.~Seneta.
\newblock {\em Nonnegative matrices and {M}arkov chains}.
\newblock Springer Series in Statistics. Springer-Verlag, New York, second
  edition, 1981.

\bibitem[Wil84]{Wil84}
Susan Williams.
\newblock Toeplitz minimal flows which are not uniquely ergodic.
\newblock {\em Z. Wahrsch. Verw. Gebiete}, 67:95--107, 1984.

\end{thebibliography}

\end{document}